\def\csarg#1#2{\expandafter#1\csname#2\endcsname}
\def\storecat#1{\tmpstorecat\escapechar \escapechar=-1\csarg\edef{restorecat\string#1}{\catcode`\string#1=\the\catcode\expandafter`\string#1}\catcode\expandafter`\string#1=12\relax\escapechar\tmpstorecat}
\def\restorecat#1{\tmpstorecat\escapechar \escapechar=-1\csname restorecat\string#1\endcsname\escapechar\tmpstorecat}
\def\myxyin{\storecat!\storecat;}
\def\myxyout{\restorecat!\restorecat;}
\theoremstyle{plain}
\newtheorem{thm}{Théorème}[section]
\newtheorem{thmintro}{Théorème}
\newtheorem{conj}[thm]{Conjecture}
\newtheorem{lem}[thm]{Lemme}
\newtheorem{cor}[thm]{Corollaire}
\newtheorem*{cor*}{Corollaire}
\newtheorem{prop}[thm]{Proposition}
\newtheorem{rappel}[thm]{Rappel}
\theoremstyle{definition}
\newtheorem{defn}[thm]{Définition}
\newtheorem{rmk}[thm]{Remarque}
\newtheorem{exemple}[thm]{Exemple}
\numberwithin{equation}{section}
\DeclareFontFamily{U}{russian}{}
\DeclareFontShape{U}{russian}{m}{n}
        { <5><6> wncyr5
        <7><8><9> wncyr7
        <10><10.95><12><14.4><17.28><20.74><24.88> wncyr10 }{}
\DeclareSymbolFont{Russian}{U}{russian}{m}{n}
\DeclareSymbolFontAlphabet{\mathcyr}{Russian}
\let\@math@cyr\mathcyr
\renewcommand{\mathcyr}[1]{\@math@cyr{\cyracc #1}}
\newcommand{\Sha}{{\mathcyr{Sh}}}
\newcommand{\ab}{{\mathrm{ab}}}
\newcommand{\bark}{{\mkern1mu\overline{\mkern-1mu{}k\mkern-1mu}\mkern1mu}}
\newcommand{\aff}{\mathrm{aff}}
\newcommand{\pr}{{\mathrm{pr}}}
\newcommand{\Gammap}{\Gamma_{\mkern-2mup}}
\newcommand{\A}{{\mathbf A}}
\renewcommand{\Im}{{\mathrm{Im}}}
\renewcommand{\P}{{\mathbf P}}
\newcommand{\Q}{{\mathbf Q}}
\newcommand{\Z}{{\mathbf Z}}
\newcommand{\Rhat}{{\mkern-2mu\widehat{\mkern2muR}}}
\newcommand{\CH}{\mathrm{CH}}
\newcommand{\nr}{\mathrm{nr}}
\newcommand{\Gm}{\mathbf{G}_\mathrm{m}}
\newcommand{\et}{{\text{ét}}}
\newcommand{\Gmbark}{\mathbf{G}_{\mathrm{m},\bark}}
\newcommand{\SL}{\mathrm{SL}}
\newcommand{\Gal}{\mathrm{Gal}}
\newcommand{\Pic}{\mathrm{Pic}}
\newcommand{\Br}{\mathrm{Br}}
\newcommand{\Spec}{\mathrm{Spec}}
\newcommand{\Div}{\mathrm{Div}}
\renewcommand{\phi}{\varphi}
\renewcommand{\epsilon}{\varepsilon}
\renewcommand{\emptyset}{\varnothing}
\newcommand{\Ker}{{\mathrm{Ker}}}
\newcommand{\Coker}{{\mathrm{Coker}}}
\newcommand{\Hom}{{\mathrm{Hom}}}
\newcommand{\Homrond}{\mathscr{H}\mkern-4muom}
\newcommand{\mmu}{\boldsymbol{\mu}}
\newcommand{\chapeau}{{\rlap{\smash{\hbox{\lower4pt\hbox{\hskip1pt$\widehat{\phantom{u}}$}}}}}}
\newcommand{\zhat}{{\widehat{z}}}
\newcommand{\CHzhat}{\CH_{0\phantom{,}}^\chapeau}
\newcommand{\CHzAhat}{\CH_{0,\A}^\chapeau}
\DeclareMathOperator{\inv}{inv}
\edef\textsectiondouble{\textsection\textsection\penalty10000\hskip3.4pt}
\edef\textsection{\textsection\penalty10000\hskip3.4pt}
\let\@wraptoccontribs\wraptoccontribs\makeatother
\def\@@and{et}\makeatother
\date{26 février 2018; révisé le 23 septembre 2019}
\title[Zéro-cycles sur les espaces homogènes]{Zéro-cycles sur les espaces homogènes\\et problème de Galois inverse}
\author{Yonatan Harpaz}
\address{Institut Galil\'ee, Universit\'e Paris 13, 99~avenue Jean-Baptiste Cl\'ement, 93430 Villetaneuse, France}
\email{harpaz@math.univ-paris13.fr}
\author{Olivier Wittenberg}
\address{D\'epartement de math\'ematiques et applications, \'Ecole normale sup\'erieure, 45~rue d'Ulm, 75230 Paris Cedex 05, France}
\email{wittenberg@dma.ens.fr}
\begin{document}
\begin{abstract}
Soit~$X$ une compactification lisse d'un espace homogène d'un groupe algébrique linéaire~$G$ sur un corps de nombres~$k$.
Nous établissons la conjecture de Colliot-Thélène, Sansuc, Kato et Saito
sur l'image du groupe de Chow des zéro-cycles de~$X$ dans le produit des mêmes groupes
sur tous les complétés de~$k$.
Lorsque~$G$ est semi-simple et simplement connexe et que le stabilisateur géométrique
est fini et hyper-résoluble,
nous
montrons que les points rationnels de~$X$ sont denses dans l'ensemble de Brauer--Manin.
Pour les groupes finis hyper-résolubles,
en particulier pour les groupes finis nilpotents,
cela donne une nouvelle preuve du théorème de Shafarevich sur le problème de Galois inverse
et résout en même temps, pour ces groupes, le problème de Grunwald.
\end{abstract}

\maketitle

\section{Introduction}
\label{sec:intro}

On sait, depuis Hilbert
et Noether,
que si~$k$ est un corps de nombres et~$\Gamma$ un groupe fini,
la question de l'existence d'une extension galoisienne de~$k$
de groupe
de Galois isomorphe à~$\Gamma$ (le \og{}problème de Galois inverse\fg{})
est en réalité un problème sur l'arithmétique
des variétés algébriques quotient $\A^m_k/\Gamma$ où~$\Gamma$ agit par permutation des coordonnées
de l'espace affine~$\A^m_k$
(une fois choisis un entier~$m$ et un plongement de~$\Gamma$ dans le groupe
symétrique~$\mathfrak{S}_m$;
voir \cite[p.~xiii]{serretopics}); ou, de façon équivalente,
un problème sur l'arithmétique des espaces homogènes $\SL_n/\Gamma$
où~$\Gamma$ est vu comme
 un sous-groupe algébrique constant de~$\SL_n$
(une fois choisis un entier~$n$ et un plongement de~$\Gamma$ dans $\SL_n(k)$;
voir \cite[Corollary~3.11]{ctsanmumbai}).

Ekedahl et Colliot-Thélène ont ainsi montré que l'existence d'une extension galoisienne
de~$k$ de groupe~$\Gamma$ résulterait de la propriété dite
d'approximation très faible pour la variété $\SL_n/\Gamma$ sur~$k$
(voir \cite{ekedahl}, \cite[Theorem~3.5.7]{serretopics}, \cite[Proposition~1]{harariquelques}).
De même, le problème de Grunwald, qui consiste à déterminer dans quels cas
il existe une extension
galoisienne de~$k$ de groupe~$\Gamma$ dont les complétés en un ensemble fini de places
de~$k$ sont prescrits, se traduit en termes des points rationnels
de $\SL_n/\Gamma$ et de leur position dans l'espace des points locaux
(voir \cite{chernousov},
\cite[\textsectiondouble1.1--1.2]{harariquelques}, \cite{demarchelucchinineftin}).

La variété $\SL_n/\Gamma$ est un exemple
d'espace homogène de groupe algébrique linéaire.
De façon générale, les compactifications lisses
d'espaces homogènes de groupes algébriques linéaires sont des variétés géométriquement
unirationnelles, donc
rationnellement connexes au sens de Campana, Kollár, Miyaoka et Mori
(voir \cite[Chapter~IV]{kollarbook});
à ce titre, l'étude de leurs points rationnels est gouvernée, sur les corps
de nombres, par la conjecture suivante de
Colliot-Thélène~\cite{ctbudapest}:

\begin{conj}
\label{conj:ctrc}
Soit~$X$ une variété propre et lisse sur un corps de nombres~$k$.
Si~$X$ est rationnellement connexe,
l'ensemble~$X(k)$ est dense dans $X(\A_k)^{\Br(X)}$.
\end{conj}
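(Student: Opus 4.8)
La conjecture~\ref{conj:ctrc} est ouverte en général; le plan ci-dessous ne la vise que pour une compactification lisse~$X$ d'un espace homogène~$X_0$ d'un groupe algébrique linéaire~$G$, et, s'agissant des points rationnels, seulement lorsque~$G$ est semi-simple et simplement connexe et que le stabilisateur géométrique est fini et hyper-résoluble. J'établirais d'abord l'énoncé analogue pour les zéro-cycles (conjecture de Colliot-Thélène, Sansuc, Kato et Saito), \emph{sans} restriction: le dévissage ci-après s'y applique, et la méthode de fibration pour les zéro-cycles, insensible à l'existence de points rationnels dans les fibres, y autorise aussi les stabilisateurs connexes ou non hyper-résolubles; ce résultat servira ensuite d'ingrédient. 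Pour les points rationnels, les réductions standard --- plongement du stabilisateur dans un~$\SL_n$, résolution par des torseurs sous des groupes linéaires connexes quasi-triviaux, invariance birationnelle de la propriété en jeu pour les variétés rationnellement connexes ---, jointes aux calculs de Sansuc et de Borovoi pour les groupes connexes, ramènent au cas $X_0=\SL_n/\Gamma$ avec~$\Gamma$ un $k$-schéma en groupes fini, qui reste hyper-résoluble.

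\textbf{Récurrence sur $|\Gamma|$.} Choisissons un sous-groupe distingué $\Gamma'\triangleleft\Gamma$, hyper-résoluble et de cardinal strictement plus petit, tel que $\Gamma/\Gamma'$ soit cyclique. Le $\Gamma/\Gamma'$-torseur $\SL_n\times^{\Gamma}(\Gamma/\Gamma')$ sur~$\SL_n/\Gamma$ provient génériquement d'un torseur universel porté par un ouvert~$B$ d'un tore quasi-trivial; on en déduit un $k$-morphisme dominant d'un modèle lisse de~$\SL_n/\Gamma$ vers une compactification lisse de~$B$, dont la fibre générique est la forme de~$\SL_n/\Gamma'$ tordue par ce torseur --- un espace homogène de même nature, auquel s'applique l'hypothèse de récurrence. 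Comme~$B$ vérifie l'approximation faible et a un groupe de Brauer non ramifié contrôlé, la méthode de fibration, dans la forme raffinée requise pour les points rationnels (où intervient l'énoncé précédent sur les zéro-cycles), déduit la densité de~$X(k)$ dans~$X(\A_k)^{\Br(X)}$ de la même propriété pour les fibres, connue par récurrence. Le cas de départ, $\Gamma$ cyclique, équivaut au théorème de Grunwald--Wang --- dont la forme précise, incluant les cas spéciaux expliqués par une classe de Brauer, est classique.

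\textbf{L'obstacle principal.} La difficulté essentielle est d'obtenir de la méthode de fibration des \emph{points rationnels}, et non de simples zéro-cycles: il faut, à chaque étape, exhiber un point $k$-rationnel de~$B$ au-dessus duquel la fibre admette un $k$-point proche du point adélique prescrit et compatible avec l'obstruction de Brauer--Manin --- un énoncé de surjectivité arithmétique \emph{avec approximation} pour la famille. C'est là qu'intervient l'hypothèse sur~$\Gamma$: seuls les quotients cycliques permettent de produire, via le théorème de Chebotarev, les extensions cycliques convenablement ramifiées que l'argument de fibration réclame; hors du cas hyper-résoluble, rien d'analogue n'est disponible et la méthode échoue. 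Il faut en outre contrôler, tout au long du dévissage, que toute classe de Brauer algébrique sur~$X$ provient de la base et des fibres, de sorte qu'un point adélique annulant l'obstruction de Brauer--Manin sur~$X$ fournisse des données compatibles en bas et dans la fibre pertinente; les descriptions des groupes de Brauer des espaces homogènes de groupes connexes, puis des variétés~$\SL_n/\Gamma$, y jouent un rôle décisif.
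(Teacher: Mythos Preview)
The statement is a conjecture; the paper does not prove it in general but establishes the special cases recorded as théorème~\ref{th but} and théorème~\ref{th points rat hr}. Your proposal rightly targets those, and the overall shape (induction on~$|\Gamma|$ via a cyclic quotient, descent, fibration) is the paper's. But the central geometric step of your induction is wrong as stated, and the argument cannot proceed from it.

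You describe a dominant morphism from a smooth model of $\SL_n/\Gamma$ to a compactification of an open~$B$ of a quasi-trivial torus, with generic fibre a twisted form of $\SL_n/\Gamma'$. This is impossible on dimension grounds: since~$\Gamma$ and~$\Gamma'$ are finite, both $\SL_n/\Gamma$ and $\SL_n/\Gamma'$ have dimension $\dim\SL_n$, so such a fibration would force $\dim B=0$. The paper's construction is different: one fibres not~$X$ but a \emph{torsor} $Y\to X$ under a \emph{torus}~$T$. The cyclic quotient defines a finite group scheme~$R$ and an injective type $\lambda:\Rhat\hookrightarrow\Pic(V_\bark)$; this extends to $\nu:\widehat T\to\Pic(X_\bark)$ with~$\widehat T$ torsion-free, and one applies the descent théorème~\ref{th descente} (which, unlike classical Colliot-Thélène--Sansuc descent, handles transcendental Brauer classes) to reduce the question on~$X$ to the same question on a compactification~$Z$ of each $T$\nobreakdash-torsor of type~$\nu$. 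It is~$Z$, not~$X$, that carries a fibration over the quasi-trivial torus $Q=T/R$ (proposition~\ref{prop:comparaisontorseurs}), with fibres the $R$\nobreakdash-torsors of type~$\lambda$ on~$V$ --- homogeneous spaces with geometric stabiliser~$\Gamma'$ (proposition~\ref{prop:torseurs sont eh}). Three further discrepancies: (a)~the role of the cyclicity of $\Gamma/\Gamma'$ is not a Chebotarev argument but the input to proposition~\ref{prop:sectionbark}~(ii), which yields a rational section of the fibration over~$\bark$ and hence the geometrically split fibres required by théorème~\ref{thm:fibration presque favorable}~(ii); (b)~the zero-cycle theorem is \emph{not} an ingredient in the rational-point theorem --- the paper explicitly notes their logical independence, the latter resting only on Harari's and Skorobogatov's fibration results; (c)~the base case of the induction is $\Gamma=\{1\}$ (Kneser--Harder--Chernousov), not $\Gamma$ cyclic.
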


On note ici $X(\A_k)^{\Br(X)}$ l'ensemble de Brauer--Manin de~$X$,
un fermé de l'espace~$X(\A_k)$ des points adéliques de~$X$,
contenant~$X(k)$,
défini par Manin~\cite{maninicm} à l'aide
du groupe de Brauer $\Br(X)=H^2_\et(X,\Gm)$
et
de la théorie du corps de classes local et global.

La conjecture~\ref{conj:ctrc} possède un analogue pour le groupe de Chow des zéro-cycles,
dû à Colliot-Thélène, Sansuc, Kato et Saito
(voir
\cite[\textsection4]{ctsansuc},
\cite[\textsection7]{katosaitocontemp}, \cite[\textsection1]{ctbordeaux},
\cite[Conjecture~1.2]{hw}),
couramment dénommé conjecture~$(E)$.
Dans le cas des variétés rationnellement connexes,
celle-ci prend la forme suivante
(voir le rappel~\ref{rappel:rce} ci-dessous):

\begin{conj}
\label{conj:e}
Soit~$X$ une variété propre et lisse sur un corps de nombres~$k$.
Notons~$\Omega_f$ l'ensemble des places finies
de~$k$
et~$\Omega_\infty$ celui de ses places infinies.
Si~$X$ est rationnellement connexe, le complexe
\begin{align}
\label{se:e}
\xymatrix@C=1em{
\CH_0(X) \ar[r] & \displaystyle\prod_{v \in \Omega_f} \CH_0(X_{k_v}) \times \prod_{v \in \Omega_\infty}
\frac{\CH_0(X_{k_v})}{N_{\bark_v/k_v}(\CH_0(X_{\bark_v}))}
\ar[r] & \Hom(\Br(X),\Q/\Z)\rlap{,}
}
\end{align}
où la seconde flèche est la somme
des accouplements
$\CH_0(X_{k_v}) \times \Br(X_{k_v}) \to \Q/\Z$
obtenus en composant les accouplements \cite[Définition~7]{maninicm} à valeurs
dans~$\Br(k_v)$ avec
l'invariant $\Br(k_v)\hookrightarrow \Q/\Z$ de la théorie du corps de classes local,
est exact.
\end{conj}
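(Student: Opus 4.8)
The plan is to establish Conjecture~\ref{conj:e} in the case announced in the abstract, namely for a smooth compactification $X$ of a homogeneous space $Y=G/H$ of a linear algebraic group $G$ over the number field~$k$. Since the exactness of~\eqref{se:e} is a stable birational invariant of smooth proper rationally connected varieties (both $\CH_0$ and $\Br$ being birational invariants of smooth proper varieties, and both being unchanged after multiplication by a projective space), one may choose $X$ freely and replace $Y$ by any homogeneous space stably birationally equivalent to it. Enlarging $G$ to an $\SL_n$ acting on $Y$, one may thus assume $G=\SL_n$; and, the case of a connected stabilizer being known (by work of Borovoi, Colliot-Thélène and Xu, among others, and the literature cited above) and the reduction of the general case to it and to the case of a finite stabilizer being by now standard, it is enough to treat $Y=\SL_n/H$ with $H$ a finite $k$-group scheme, whose geometric stabilizer I write $\overline H=H_{\bark}$. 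It will be crucial to carry this out simultaneously over every number field, so that the inductive step below may descend to residue fields.

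The main tool is the fibration method for zero-cycles, which --- in sharp contrast with its counterpart for rational points --- is \emph{unconditional}: no hypothesis of Schinzel type intervenes. In the form to be used it asserts, roughly, that if $f\colon X\to B$ is a dominant morphism of smooth proper $k$-varieties with rationally connected generic fibre, if $B$ and a sufficiently rich supply of fibres of $f$ over closed points satisfy the conclusion of Conjecture~\ref{conj:e} over the relevant residue fields, and if the non-split fibres of $f$ and the local conditions at the places of~$k$ are suitably controlled, then $X$ satisfies that conclusion as well. The new input is the construction, by induction on the integer $|\overline H|$, of such a morphism with strictly simpler base and fibres. Suppose $\overline H$ has a proper non-trivial normal subgroup and choose a normal subgroup scheme $A$ of $H$ with $1\neq A\neq H$; put $Q=H/A$, fix a faithful representation $Q\hookrightarrow\SL_m$, and let $H$ act diagonally on $\SL_n\times\SL_m$, via the given embedding on the first factor and through $H\twoheadrightarrow Q\hookrightarrow\SL_m$ on the second. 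Then $Y_1:=(\SL_n\times\SL_m)/H$ is a homogeneous space of $\SL_n\times\SL_m$ with stabilizer $H$ which, being an $\SL_m$-torsor over $\SL_n/H$ and $\SL_m$ being a special group, is stably birationally equivalent to $\SL_n/H$; composing the projection to $\SL_m$ with $\SL_m\to\SL_m/Q$ yields a morphism $Y_1\to\SL_m/Q$ whose fibre over a closed point --- a point which classifies a $Q$-torsor over the residue field $\ell$ --- is, after a natural identification, the homogeneous space of $\SL_n$ over $\ell$ whose stabilizer is the corresponding $\ell$-form of $A$. The base $\SL_m/Q$ has geometric stabilizer $\overline H/\overline A$, and the fibres have geometric stabilizer $\overline A$, each of order strictly less than $|\overline H|$; hence the inductive hypothesis, applied over all the residue fields occurring, yields~\eqref{se:e} for smooth compactifications of the base and of the fibres, and the fibration theorem then gives it for a smooth compactification of $Y_1$, hence for $X$. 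The induction begins at $\overline H=1$, where $Y=\SL_n$ is rational and~\eqref{se:e} reduces to global class field theory.

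I expect the main difficulty to be twofold. First, at each step one must check the entire list of hypotheses of the zero-cycle fibration theorem: not only the conclusion of Conjecture~\ref{conj:e} for the base and the fibres (which is the inductive hypothesis), but also the control of the non-split fibres --- requiring a close analysis of the boundary of a smooth compactification of the homogeneous fibration $Y_1\to\SL_m/Q$ and of its degenerate fibres --- and the local solubility of the fibres at every place, which amounts to comparing $H^1(k_v,\overline A)$ with $H^1(k_v,\overline H)$ uniformly in~$v$, i.e.\ to a local Grunwald-type analysis of the torsors parametrised by $\SL_m/Q$. Secondly, and more seriously, the inductive construction above breaks down exactly when $\overline H$ has no proper non-trivial normal subgroup scheme: among such $\overline H$ the abelian ones (which are of prime order) are handled by Colliot-Thélène--Sansuc descent, but the non-abelian simple ones are not, and their treatment --- which is precisely what carries the theorem beyond the abelian-stabilizer case --- requires a substitute fibration, still producing from $\SL_n/H$ a fibration over a rationally connected base with homogeneous fibres of strictly smaller geometric stabilizer; I expect this to be the crux of the whole argument, and the place where the unconditional nature of the zero-cycle fibration method, and the latitude afforded by effective zero-cycles of variable degree, are indispensable. (For rational points the analogous scheme can only be carried out when $\overline H$ is supersolvable --- so that every composition factor is cyclic and no non-abelian simple factor occurs --- since one then also needs genuine rational points on the fibres at each stage, produced by climbing the tower one cyclic quotient at a time via the Grunwald--Wang theorem; this is the source of the restriction in the second part of the abstract.)
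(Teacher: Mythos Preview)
Your overall architecture is right: reduce to $\SL_n$ with finite stabilizer, argue by induction on $|\overline H|$, and exploit the \emph{unconditional} fibration method for zero-cycles from~\cite{hw}. But there is a genuine gap, and it is exactly the one you flag at the end without resolving.

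\medskip
\textbf{The missing idea: Sylow reduction.} You correctly observe that your inductive fibration collapses when $\overline H$ is non-abelian simple, and you say this ``is the crux of the whole argument''. The paper's way out is not a substitute fibration but a \emph{transfer argument}: for each prime~$p$ one produces, over a finite extension $\ell/k$ of degree prime to~$p$, a finite \'etale cover $W\to V\otimes_k\ell$ of degree prime to~$p$ such that~$W$ is a homogeneous space of $\SL_n\otimes_k\ell$ with stabilizer a $p$\nobreakdash-group (Proposition~\ref{prop:eh sylow}, obtained by lifting a $p$\nobreakdash-Sylow through the fundamental exact sequence of $\pi_1^{\et}$). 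Since $p$\nobreakdash-groups are nilpotent, hence solvable, the induction via derived subgroups never meets a non-abelian simple stabilizer. One then checks that the obstruction group~$h_Z$ measuring failure of~$(E)$ is killed by the degree of any generically finite morphism from a variety satisfying~$(E^+)$ (Proposition~\ref{prop:eplus en p}); being of torsion prime to every~$p$, it vanishes. This Sylow trick is the step you are missing, and without it your plan does not close.

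\medskip
\textbf{A structural difference in the induction.} Even in the solvable range, the paper's inductive step is not your fibration $Y_1\to\SL_m/Q$. Instead one passes to the \emph{universal torsors} of a smooth compactification~$X$ of~$V$ (this requires the descent theorem of \S\ref{sec:descente}, which handles transcendental Brauer classes). The key geometric observation (Corollaries~\ref{cor:torseurs universels} and~\ref{cor:torseurs universels eh}) is that each universal torsor of~$X$ contains a dense open fibred over a \emph{quasi-trivial torus}~$Q$, with fibres the universal torsors of~$V$, which are themselves homogeneous spaces of~$\SL_n$ with stabilizer the derived subgroup~$H'$. Two advantages over your construction: the base is (an open in)~$\P^n$, so the zero-cycle fibration theorem of~\cite{hw} applies directly without any analysis of non-split fibres over a homogeneous base; and the drop in stabilizer is $H\rightsquigarrow H'$, which is automatic for solvable~$H$ and interacts cleanly with the Sylow reduction. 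Your fibration over $\SL_m/Q$ would instead require a fibration theorem over an arbitrary rationally connected base (not just~$\P^n$), together with the degeneration analysis you anticipate; the paper sidesteps all of this.

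\medskip
Finally, a minor correction: in the hyper-resoluble case for rational points, the paper does not invoke Grunwald--Wang. The cyclic quotients enter because the torsion of~$\widehat R$ is then cyclic, which via Proposition~\ref{prop:sectionbark}~(ii) forces the fibration over the quasi-trivial torus to have a rational section over~$\bark$; this is what makes the fibres over codimension~$1$ points of $Q^{\aff}\otimes_k\bark$ split, allowing Theorem~\ref{thm:fibration presque favorable}~(ii) to apply unconditionally.
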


À la suite des travaux de
Legendre, Minkowski, Hasse, Châtelet,
Eichler, Landherr,
Kneser,
Harder,
Platonov,
Chernousov,
Voskresenski\u{\i} et Sansuc,
la conjecture~\ref{conj:ctrc}
fut établie par Borovoi~\cite{borovoi}
pour toute variété~$X$ birationnellement équivalente à un espace homogène
d'un groupe algébrique linéaire connexe
soumis à l'une des deux hypothèses suivantes:
soit le stabilisateur d'un point géométrique est connexe,
soit il est fini et abélien et le groupe ambiant est
semi-simple et simplement connexe.
Liang~\cite{liangarithmetic} en déduisit la validité de la conjecture~\ref{conj:e} pour les mêmes variétés.

Récemment, Demarche et Lucchini Arteche~\cite{demarchelucchinireduction}
(voir aussi \cite{lucchinireduction})
ont montré que le cas général de la conjecture~\ref{conj:ctrc} pour
une variété~$X$ birationnellement équivalente
à un espace homogène d'un groupe algébrique linéaire connexe se ramène à celui
où le groupe ambiant est~$\SL_n$ et où le stabilisateur d'un point géométrique est fini.
Ce cas particulier présente d'ailleurs un intérêt propre, puisque
la conjecture~\ref{conj:ctrc} pour une compactification lisse
de~$\SL_n/\Gamma$
implique la propriété d'approximation très faible
pour $\SL_n/\Gamma$
(voir \cite[\textsection1.3]{harariquelques}),
d'où, entre autres,
une réponse positive au problème de Galois inverse pour~$\Gamma$.

Sur tout corps de nombres~$k$,
Neukirch \cite[Corollary~2]{neukirchinv} a établi
l'approximation faible pour $\SL_n/\Gamma$
(donc la conjecture~\ref{conj:ctrc}
pour toute variété~$X$ propre, lisse et birationnellement équivalente à~$\SL_n/\Gamma$)
lorsque~$\Gamma$ est un groupe fini constant résoluble
d'ordre premier au nombre de racines de l'unité contenues dans~$k$ (ce qui exclut notamment
les groupes d'ordre pair).  Ce résultat s'étend aux groupes finis non constants
et aux espaces homogènes dépourvus de point rationnel, à condition d'imposer une restriction similaire,
forte, sur l'ordre du stabilisateur géométrique
(voir~\cite{lucchiniapproxfaible}).
Enfin, Harari~\cite{harariquelques}
a vérifié la conjecture~\ref{conj:ctrc} pour les variétés~$X$ propres, lisses et birationnellement
équivalentes à $\SL_n/\Gamma$ lorsque~$\Gamma$ est un groupe algébrique fini
produit semi-direct itéré à noyaux abéliens.

Le présent article introduit une méthode nouvelle pour les
espaces homogènes de groupes algébriques linéaires semi-simples simplement
connexes à stabilisateur géométrique fini et
l'applique à l'étude des conjectures~\ref{conj:ctrc} et~\ref{conj:e}
dans le cadre des espaces homogènes de
groupes linéaires.
Les deux théorèmes
principaux qu'elle nous permet d'obtenir sont les suivants.  Le premier
porte sur les zéro-cycles et résout, dans ce cadre,
la conjecture~\ref{conj:e} en toute généralité,
c'est-à-dire sans aucune hypothèse sur le stabilisateur.

\begin{thmintro}
\label{th but}
La conjecture~\ref{conj:e} vaut,
sur tout corps de nombres,
pour toute variété propre et lisse
birationnellement équivalente à un espace homogène d'un groupe
algébrique linéaire.
\end{thmintro}

Notant $\Br_\nr(V)$ le groupe de Brauer d'une compactification lisse
d'un espace homogène~$V$, le théorème~\ref{th but}
a pour corollaire immédiat,
compte tenu du lemme de déplacement pour les zéro-cycles \cite[p.~599]{ctfinitude}, l'énoncé d'existence
suivant:

\begin{cor*}
Soit~$k$ un corps de nombres.
Soit~$V$ un espace homogène d'un groupe algébrique linéaire connexe sur~$k$.
Si $V(\A_k)^{\Br_\nr(V)}\neq\emptyset$, il existe un zéro-cycle de degré~$1$ sur~$V$.
\end{cor*}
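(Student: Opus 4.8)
\emph{Esquisse de démonstration.}\quad Le plan est d'appliquer le théorème~\ref{th but} à une compactification lisse~$X$ de~$V$ --- qui existe puisque~$k$ est de caractéristique nulle --- pour en déduire l'existence d'un zéro-cycle de degré~$1$ sur~$X$, puis de redescendre ce zéro-cycle à l'ouvert~$V$ grâce au lemme de déplacement. Fixons une telle~$X$. Par définition $\Brnr(V)=\Br(X)$, l'homomorphisme de restriction $\Br(X)\to\Br(V)$ étant injectif car~$X$ est lisse et intègre sur~$k$ et~$V$ y est un ouvert dense.

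Soit $(x_v)_v\in V(\A_k)^{\Brnr(V)}$. Comme $V\subseteq X$, chacun des~$x_v$ est un point local de~$X$, et la famille $([x_v])_v$ des classes de zéro-cycles associées définit un élément du terme médian du complexe~\eqref{se:e} relatif à~$X$: aux places finies, $[x_v]\in\CH_0(X_{k_v})$ est de degré~$1$; aux places archimédiennes, on en prend la classe dans le quotient correspondant. Pour tout $\alpha\in\Br(X)$ et toute place~$v$, l'accouplement de $[x_v]$ avec l'image de~$\alpha$ dans $\Br(X_{k_v})$ vaut $\inv_v\bigl(\alpha(x_v)\bigr)$, où $\alpha(x_v)\in\Br(k_v)$ est la valeur en $x_v\in V(k_v)$ de l'image de~$\alpha$ dans $\Br(V)$; la somme de ces invariants sur~$v$, qui ne comporte qu'un nombre fini de termes non nuls, est nulle par définition de l'ensemble de Brauer--Manin. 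La famille $([x_v])_v$ appartient donc au noyau de la seconde flèche de~\eqref{se:e}.

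D'après le théorème~\ref{th but}, le complexe~\eqref{se:e} relatif à~$X$ est exact, car~$X$ est propre, lisse et birationnellement équivalente à~$V$, espace homogène d'un groupe algébrique linéaire. Il existe donc $z\in\CH_0(X)$ d'image $([x_v])_v$ dans le terme médian de~\eqref{se:e}; en comparant les degrés en une place finie, on trouve $\deg(z)=1$. Le lemme de déplacement pour les zéro-cycles \cite[p.~599]{ctfinitude} permet alors de représenter la classe de~$z$ par un zéro-cycle à support dans l'ouvert dense~$V$; l'équivalence rationnelle préservant le degré, on obtient ainsi un zéro-cycle de degré~$1$ sur~$V$.

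La difficulté est tout entière concentrée dans le théorème~\ref{th but}; le passage de celui-ci au corollaire est purement formel. Le seul point méritant attention est la vérification que l'hypothèse de Brauer--Manin, formulée pour les \emph{points} adéliques de~$V$ et le groupe $\Brnr(V)$, entraîne l'orthogonalité requise à $\Br(X)$ de la famille de \emph{zéro-cycles} associée sur~$X$ dans~\eqref{se:e}: cela résulte, comme ci-dessus, de ce que les points locaux de~$V$ sont en particulier des points locaux de~$X$, et de la compatibilité des accouplements de Brauer--Manin pour les points et pour les zéro-cycles.
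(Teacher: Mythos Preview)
Your proposal is correct and follows exactly the approach the paper itself indicates: it presents the corollary as an immediate consequence of the théorème~\ref{th but} combined with the lemme de déplacement pour les zéro-cycles \cite[p.~599]{ctfinitude}. Your write-up simply spells out the details (passage to the smooth compactification~$X$, identification $\Br_\nr(V)=\Br(X)$, exactness of~\eqref{se:e} giving a global zero-cycle of degree~$1$, then the moving lemma), all of which are precisely the steps implicit in the paper's one-line justification.
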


Le second théorème porte sur les points rationnels.
Contrairement au théorème~\ref{th but}, qui repose sur~\cite{hw},
le théorème~\ref{th points rat hr} ne s'appuie que sur des résultats
connus depuis une vingtaine d'années
(notamment dus à Harari~\cite{harariduke}, \cite{hararifleches}).

\begin{thmintro}
\label{th points rat hr}
Soit~$X$ une variété propre, lisse et géométriquement irréductible sur
un corps de nombres~$k$.
Si~$X$ est
birationnellement équivalente à un espace homogène d'un groupe
algébrique linéaire semi-simple simplement connexe, à stabilisateur géométrique
fini hyper-résoluble (en tant que groupe fini muni d'une action extérieure
du groupe de Galois absolu de~$k$; voir la définition~\ref{def:hyperresoluble} ci-dessous),
alors $X(k)$ est dense dans $X(\A_k)^{\Br(X)}$.

En particulier, la conjecture~\ref{conj:ctrc} vaut pour les variétés
propres et lisses birationnellement
équivalentes à $\SL_n/\Gamma$ avec~$\Gamma$ fini constant hyper-résoluble (par exemple, nilpotent).
\end{thmintro}

Le théorème~\ref{th points rat hr} implique une réponse positive
au problème de Galois inverse pour les groupes finis hyper-résolubles, en particulier pour les groupes
finis nilpotents, sur tout corps de nombres.  Une réponse positive
au problème de Galois inverse
avait déjà été apportée
par Shafarevich pour ces groupes, et plus généralement pour les groupes finis résolubles
(voir \cite[Chapter~IX, \textsection6]{neukirchschmidtwingberg} et les références qui y sont données).
Cependant, le théorème~\ref{th points rat hr} donne des informations significativement plus précises que la
seule existence d'une extension finie galoisienne de~$k$ de groupe de Galois donné.
Ainsi, par exemple, en le combinant avec les résultats récents de Lucchini
Arteche~\cite[\textsection 6]{lucchiniunramifiedbrauer} sur le groupe de
Brauer non ramifié des espaces homogènes, on obtient,
compte tenu de \cite[Proposition~2.4]{demarchelucchinineftin},
une solution au \og{}problème d'approximation modéré\fg{}
de \emph{op.\ cit.}, \textsection1.2,
pour les groupes hyper-résolubles.
Dans le cas des stabilisateurs constants, cela se traduit par
l'énoncé suivant, auparavant inconnu même pour les groupes finis nilpotents:

\begin{cor*}
Soit~$\Gamma$ un groupe fini hyper-résoluble.
Soit~$k$ un corps de nombres.
Soit~$S$ un ensemble fini de places de~$k$
ne contenant aucune place finie divisant l'ordre de~$\Gamma$.
Pour chaque $v \in S$, fixons
une extension galoisienne~$K_v/k_v$
dont le groupe de Galois se plonge dans~$\Gamma$.
Il existe alors une extension finie galoisienne~$K/k$ de groupe de Galois~$\Gamma$ telle que pour chaque $v \in S$,
l'extension de~$k_v$ obtenue en complétant~$K$ en une place divisant~$v$
soit isomorphe à $K_v/k_v$.
\end{cor*}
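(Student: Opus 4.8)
Nous déduirons cet énoncé du théorème~\ref{th points rat hr}. Plongeons~$\Gamma$ dans~$\SL_n$ comme sous-groupe constant au moyen d'une représentation linéaire fidèle définie sur~$k$ à valeurs dans~$\SL_n$ (par exemple le double de la représentation régulière), et posons $V=\SL_n/\Gamma$; comme $\Br$ est un invariant birationnel stable des variétés propres et lisses, le groupe $\Brnr(V)=\Br(X)$, où~$X$ désigne une compactification lisse de~$V$, ne dépend pas de ce choix (lemme sans nom). La variété~$X$ est propre, lisse et géométriquement irréductible, et birationnellement équivalente à un espace homogène du groupe semi-simple simplement connexe~$\SL_n$ dont le stabilisateur géométrique est le groupe fini constant~$\Gamma$, hyper-résoluble par hypothèse; le théorème~\ref{th points rat hr} assure donc que~$X(k)$ est dense dans $X(\A_k)^{\Br(X)}$. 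Il reste à en tirer parti. Comme~$\SL_n$ est spécial, la suite exacte $1\to\Gamma\to\SL_n\to V\to1$ fournit, sur~$k$ et sur chaque complété~$k_v$, une bijection entre les points rationnels de~$V$ et les classes de $H^1(k,\Gamma)$, resp.\ $H^1(k_v,\Gamma)$, c'est-à-dire les $\Gamma$-torseurs, c'est-à-dire les classes de conjugaison de morphismes continus de~$\Gal(\bark/k)$, resp.\ $\Gal(\bark_v/k_v)$, dans~$\Gamma$; un tel morphisme~$\rho$, global, provient d'une extension galoisienne de~$k$ de groupe~$\Gamma$ si et seulement s'il est surjectif, et ses localisations, qui dépendent de façon localement constante du point de~$V(k_v)$ auquel il correspond, décrivent alors les complétés de cette extension.

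Via ce dictionnaire, l'énoncé se ramène à construire un point adélique $(x_v)_{v}\in X(\A_k)^{\Br(X)}$ dont la composante en chaque~$v\in S$ relève la classe de~$K_v/k_v$ dans $H^1(k_v,\Gamma)$ --- ce qui est loisible, $H^1(k_v,\SL_n)=0$ rendant $V(k_v)\to H^1(k_v,\Gamma)$ surjective --- et tel que tout point de~$V(k)$ suffisamment proche de $(x_v)_v$ corresponde à un~$\rho$ surjectif: le théorème~\ref{th points rat hr} produit alors $x\in X(k)$ arbitrairement proche de ce point adélique, d'où, $V$ étant ouvert dans~$X$ (descente fidèlement plate), un point de~$V(k)$ répondant à la question. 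La surjectivité de~$\rho$ s'assure en imposant des conditions locales adéquates en un nombre fini de places finies auxiliaires étrangères à~$S$ et aux diviseurs de~$|\Gamma|$, ce qui est compatible avec l'approximation de façon classique (voir \cite[Corollary~3.11]{ctsanmumbai}, \cite[\textsection1.3]{harariquelques}). Le point décisif est de choisir les composantes~$x_v$ aux places $v\notin S$ de sorte que $(x_v)_v$ soit orthogonal à~$\Br(X)$. Modulo les algèbres constantes, dont la contribution aux accouplements locaux est nulle par la loi de réciprocité, l'accouplement avec~$\Br(X)$ se factorise par le groupe fini $\Br(X)/\Br(k)$, et l'évaluation $V(k_v)\to\Br(k_v)$ d'une classe fixée est nulle pour presque toute place~$v$ (par pureté et un argument d'étalement); compléter $(x_v)_{v\in S}$ en un point orthogonal à~$\Br(X)$ ne met donc en jeu qu'un nombre fini de places. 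C'est ici qu'intervient l'hypothèse que~$S$ ne contient aucune place finie divisant~$|\Gamma|$: d'après le calcul du groupe de Brauer non ramifié de~$\SL_n/\Gamma$ dû à Lucchini Arteche \cite[\textsection6]{lucchiniunramifiedbrauer}, lorsque~$\Gamma$ est hyper-résoluble le groupe $\Br(X)/\Br(k)$ est entièrement gouverné par les premiers divisant~$|\Gamma|$, en sorte que les places extérieures à~$S$ laissent assez de latitude pour annuler l'accouplement --- c'est précisément la réduction menée dans \cite[Proposition~2.4]{demarchelucchinineftin}. Le point adélique voulu existe alors, et la démonstration s'achève.

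L'obstacle principal est ce dernier point --- l'orthogonalité à~$\Br(X)$ des données locales modérées prescrites ---, unique endroit où la restriction sur~$S$ sert et qui repose en totalité sur les apports de \cite[\textsection6]{lucchiniunramifiedbrauer} et \cite[Proposition~2.4]{demarchelucchinineftin}; le théorème~\ref{th points rat hr} n'en fournit que le dernier ingrédient, la densité dans l'ensemble de Brauer--Manin.
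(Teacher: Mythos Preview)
Your approach is exactly the one the paper indicates: combine Théorème~\ref{th points rat hr} with Lucchini Arteche's results on the unramified Brauer group \cite[\textsection6]{lucchiniunramifiedbrauer} and \cite[Proposition~2.4]{demarchelucchinineftin} to solve the tame approximation problem; the paper itself gives no more detail than this, so your write-up is simply an expansion of the same argument. Two minor quibbles: the parenthetical ``(descente fidèlement plate)'' is out of place (that~$V$ is open in~$X$ is by construction of the compactification), and the phrase ``entièrement gouverné par les premiers divisant~$|\Gamma|$'' is imprecise --- the actual content of \cite[\textsection6]{lucchiniunramifiedbrauer} is that every class of $\Br_\nr(V)/\Br(k)$ admits a normalized representative whose local evaluations vanish at all places not dividing~$|\Gamma|$, which is what feeds into \cite[Proposition~2.4]{demarchelucchinineftin}.
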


Ce corollaire, qui
répond par l'affirmative,
pour les groupes
finis hyper-résolubles,
au problème de Grunwald
hors des places divisant l'ordre de~$\Gamma$
(voir \cite{demarchelucchinineftin}),
ne semble pas accessible aux méthodes de Shafarevich.

Comme on le sait depuis Wang~\cite{wang},
l'hypothèse portant sur~$S$ est essentielle à sa validité,
même lorsque $\Gamma=\Z/8\Z$, $k=\Q$, $S=\{2\}$.  D'autres exemples
montrant que cette hypothèse est cruciale
sont donnés dans~\cite[Theorem~1.2 et \textsection5]{demarchelucchinineftin}.

Des informations plus fines sur le groupe de Brauer non ramifié d'un espace homogène donné
permettent de déduire du théorème~\ref{th points rat hr} des corollaires plus
précis que celui que l'on vient d'énoncer.
Par exemple, si $\Gamma=Q_{2^m}$ est le groupe quaternionique d'ordre~$2^m$ pour un entier $m\geq 1$,
vu comme sous-groupe algébrique constant de~$\SL_n$
une fois un plongement dans $\SL_n(k)$ choisi,
Demarche~\cite[Corollaire~3, Remarque~7]{demarchebrnr} a démontré que le groupe de Brauer
non ramifié de $\SL_n/\Gamma$ est réduit à~$\Br(k)$.
Le groupe~$\Gamma$ est un $2$\nobreakdash-groupe,
donc est nilpotent, donc est hyper-résoluble;
il résulte donc du théorème~\ref{th points rat hr}
que la variété $\SL_n/\Gamma$ vérifie l'approximation faible.
Autrement dit, le corollaire au théorème~\ref{th points rat hr}
reste vrai pour $\Gamma=Q_{2^m}$ si l'on prend pour~$S$ un ensemble fini \emph{quelconque} de places de~$k$.
Un tel énoncé était précédemment connu pour $m\leq 4$ seulement
(voir \cite[Théorème~7, Remarque~13]{demarchebrnr}).

\medskip
Les résultats de Borovoi~\cite{borovoi} et Liang~\cite{liangarithmetic} mentionnés plus haut
ne prennent en compte que le sous-groupe \emph{algébrique} du groupe de Brauer non ramifié
de l'espace homogène en question, c'est-à-dire le sous-groupe constitué des classes annulées par une
extension des scalaires.  Sous les hypothèses de leurs théorèmes,
tout le groupe de Brauer non ramifié est en fait algébrique, comme l'ont
montré Bogomolov,
Borovoi, Demarche et Harari
(voir~\cite{bogomolov}, \cite{borovoidemarcheharari}
et les références données dans
\cite[\textsection3.2.1]{wslc}).
Les théorèmes~\ref{th but} et~\ref{th points rat hr},
en revanche,
prennent
en compte les classes \emph{transcendantes} (\emph{i.e.}\ non algébriques) du groupe de Brauer non ramifié.
En effet,
dans la situation du théorème~\ref{th points rat hr},
Saltman et Bogomolov
ont établi dans les années~1980 que le groupe de Brauer de~$X$ peut contenir des éléments transcendants
(voir~\cite{saltmannoether},
\cite{bogomolovbrauer};
en particulier~$X$ n'est pas géométriquement rationnelle en général);
Demarche, Lucchini Arteche et
Neftin~\cite{demarchelucchinineftin} ont prouvé que ceux-ci jouent un rôle
dans l'obstruction de Brauer--Manin pour de tels espaces homogènes.

\medskip
Les démonstrations des théorèmes~\ref{th but} et~\ref{th points rat hr} reposent
sur une observation
simple concernant la géométrie des espaces homogènes de~$\SL_n$
à stabilisateur
géométrique~$\Gamma$ fini, à savoir:
\emph{les torseurs universels de leurs compactifications lisses} (au sens de la théorie
de la descente de Colliot-Thélène et Sansuc~\cite{ctsandescent2})
\emph{contiennent chacun un ouvert fibré au-dessus d'un tore quasi-trivial
en espaces homogènes de~$\SL_n$ à stabilisateur géométrique le groupe dérivé~$\Gamma'$}.

Lorsque~$\Gamma$ est résoluble, on voit apparaître là une structure géométrique inductive.
C'est cette structure que nous exploitons dans l'article, à l'aide de deux ingrédients:
la méthode des fibrations, qui vise à déduire un énoncé arithmétique, pour l'espace total d'une
fibration, du même énoncé pour ses fibres et pour sa base,
et la méthode de la descente, qui vise à déduire un énoncé arithmétique, pour une variété donnée, du même
énoncé pour ses torseurs universels.

Dans le cadre des zéro-cycles, la méthode des fibrations est directement applicable sous la forme
qui lui est donnée dans~\cite{hw}.  Dans le cadre des points rationnels, les résultats de~\cite{hw}
ne permettent d'appliquer la méthode des fibrations que de façon conditionnelle;
nous établissons dans le présent article
des énoncés inconditionnels qui suffisent à traiter le cas des stabilisateurs hyper-résolubles
et dont les preuves reposent notamment
sur les travaux de Harari~\cite{harariduke}, \cite{hararifleches}.
Il nous est également nécessaire d'étendre la théorie de la descente de Colliot-Thélène et Sansuc
afin de montrer que
 la conjecture~\ref{conj:ctrc}
pour une variété propre, lisse et rationnellement connexe
se déduit de la même conjecture
pour les compactifications lisses de ses torseurs universels.
Cet énoncé général ne résulte de~\cite{ctsandescent2} que dans le cas où le groupe de Brauer de la variété
considérée est constitué uniquement de classes algébriques
(ce qui, d'après Saltman et Bogomolov, n'est pas le cas ici).

Il est à noter que même si
l'espace homogène auquel on s'intéresse est $\SL_n/\Gamma$ pour un groupe~$\Gamma$ fini constant,
les espaces homogènes qui apparaissent dans la fibration que l'on vient d'évoquer
ne sont que des formes
de $\SL_n/\Gamma'$: ils ne possèdent pas nécessairement de point rationnel (et quand ils en possèdent,
le stabilisateur n'est pas nécessairement un groupe constant).
Même pour les seules applications au problème de Galois inverse,
il est donc essentiel à la stratégie de la démonstration
que le théorème~\ref{th points rat hr} s'applique aussi aux espaces homogènes dépourvus de point rationnel.

\medskip
Le texte est organisé comme suit.
Le~\textsection\ref{sec:descente} est consacré à la théorie de la descente sous un tore pour les variétés rationnellement
connexes.
Le~\textsection\ref{sec:duntorseuralautre} compare la géométrie des torseurs universels d'une variété avec ceux
d'un ouvert dense de la même variété et montre que les premiers contiennent des ouverts fibrés en les seconds
au-dessus d'un tore quasi-trivial.
Au~\textsection\ref{sec:fibrations tore qtriv}, nous discutons la méthode des fibrations pour les variétés
fibrées en variétés lisses au-dessus d'un tore quasi-trivial.
Nous nous tournons, au~\textsection\ref{sec:pi1eh}, vers les revêtements étales des espaces homogènes
de groupes algébriques linéaires connexes semi-simples et simplement connexes et identifions leurs
torseurs universels.
Enfin, les~\textsectiondouble\ref{sec:points rat}
et~\ref{sec:zerocycles}
démontrent, respectivement, les théorèmes~\ref{th points rat hr} et~\ref{th but}.

\bigskip
\emph{Remerciements.}
Nous sommes reconnaissants à Cyril Demarche et Giancarlo Lucchini Arteche de nous avoir
transmis une version préliminaire de leur article~\cite{demarchelucchinireduction},
dont dépend le~\textsection\ref{subsec:reductionstabfini} du présent article,
et aux rapporteurs de leur relecture attentive.
Le premier auteur remercie l'Institut des Hautes Études Scientifiques
pour son hospitalité pendant l'élaboration de ce travail.

\bigskip
\emph{Notations et terminologie.}
Dans tout l'article, on désigne par~$\bark$ une clôture algébrique d'un corps~$k$ de caractéristique nulle.
Les isomorphismes sont notés $\simeq$, les isomorphismes canoniques~$=$.
Tous les groupes de cohomologie sont des groupes de cohomologie galoisienne
ou étale.
Lorsque~$X$ est une variété sur~$k$ et $k'/k$ est une extension, nous noterons indifféremment~$X_{k'}$
ou~$X\otimes_k k'$ la variété sur~$k'$ obtenue par extension des scalaires,
en privilégiant la seconde notation lorsqu'il est utile de préciser le corps de base.

Une \emph{action extérieure} d'un groupe profini~$\Gamma$ sur un groupe fini~$H$ est un morphisme
de groupes continu $\Gamma \to \mathrm{Out}(H)$, où
le groupe~$\mathrm{Out}(H)$ des automorphismes extérieurs
de~$H$ est muni de la topologie discrète.

Un \emph{tore} sur~$k$ est un groupe algébrique~$T$ tel que
$T_\bark \simeq \Gmbark\times\dots\times\Gmbark$.
Il est \emph{quasi-trivial}
s'il existe une $k$\nobreakdash-algèbre étale~$E$,
c'est-à-dire une $k$\nobreakdash-algèbre
isomorphe à un produit fini d'extensions finies séparables de~$k$,
telle que $T\simeq R_{E/k}\Gm$,
où~$R_{E/k}$ désigne la restriction des scalaires à la Weil.
Un \emph{groupe de type multiplicatif} sur~$k$ (toujours supposé de caractéristique nulle)
est un groupe algébrique commutatif extension d'un groupe
algébrique fini par un tore.
Un \emph{espace homogène} d'un groupe algébrique~$G$ sur~$k$ est une variété non vide~$V$ sur~$k$
munie d'une action de~$G$ telle que~$G(\bark)$ agisse transitivement sur~$V(\bark)$.

Nous employons une notation multiplicative pour le groupe $T(\bark)$ (dont le neutre est~$1$),
additive pour le groupe $Z^1(k,T)$
des $1$\nobreakdash-cocycles continus de $\Gal(\bark/k)$ à valeurs dans
le module galoisien discret~$T(\bark)$ (le cocycle neutre est donc~$0$).
On écrira $[\sigma] \in H^1(k,T)$ pour désigner un élément de $H^1(k,T)$
et en choisir un représentant $\sigma \in Z^1(k,T)$.
Si $f:Y \to X$ est un torseur sous~$T$, on notera
$f^\sigma:Y^\sigma \to X$ le torseur sous~$T$ tordu de~$f$ par $\sigma \in Z^1(k,T)$.
Autrement dit,
si~$Z$ désigne le torseur sous~$T$ sur~$\Spec(k)$ déterminé par~$\sigma$,
alors $Y^\sigma = Y\times_k^T Z$ est le produit contracté de~$Y$ et de~$Z$ sous l'action de~$T$.
On renvoie à
\cite[Chapter~2]{skobook} pour toutes ces notions.

Une partie~$H$ d'une variété irréductible~$X$ est un sous-ensemble \emph{hilbertien}
s'il existe un ouvert dense $X^0 \subset X$, un entier $n\geq 1$
et des $X^0$\nobreakdash-schémas finis étales irréductibles $W_1,\dots,W_n$
tels que~$H$ soit l'ensemble des points de~$X^0$
au-dessus desquels la fibre de~$W_i$ est irréductible pour tout $i \in \{1,\dots,n\}$.

Si~$V$ est une variété lisse et séparée sur~$k$, une \emph{compactification lisse}
de~$V$ est une variété~$X$ propre et lisse sur~$k$ contenant~$V$ comme ouvert dense.
À plusieurs reprises
nous ferons usage du fait suivant:
si $f:W \to V$ est un morphisme entre variétés lisses et séparées sur~$k$,
alors~$V$ et~$W$ admettent des compactifications lisses et
pour
toute compactification lisse~$X$ de~$V$,
il existe une compactification lisse~$Y$
de~$W$ telle que l'application rationnelle $Y \dashrightarrow X$ définie par~$f$
soit un morphisme
(voir~\cite{nagata}, \cite{hironaka}).

Lorsque~$X$ est une variété irréductible et lisse sur~$k$, le \emph{groupe de Brauer non ramifié}
$\Br_\nr(X)$ est le groupe de Brauer de toute variété propre et lisse birationnellement équivalente à~$X$.
On
pose
$\Br_1(X)=\Ker\big(\Br(X) \to \Br(X_{\bark})\big)$
et $\Br_0(X)=\Im\big(\Br(k)\to\Br(X)\big)$
et l'on note $\bark[X]^*$ le groupe des fonctions inversibles sur~$X_\bark$.
Si~$X$ est propre, on dit que~$X$ est \emph{rationnellement connexe} si $X_{\bark}$ est rationnellement connexe
au sens de \cite[Chapter~IV, Definition~3.2.2]{kollarbook}; c'est notamment le cas si~$X_{\bark}$
est unirationnelle.

Lorsque~$k$ est un corps de nombres, on note $\Omega$ l'ensemble de ses places.
Pour tout entier~$i$ et tout $\Gal(\bark/k)$\nobreakdash-module discret~$M$
(ou tout groupe algébrique commutatif~$M$ sur~$k$),
on note $\Sha^i(k,M)=\Ker\big(H^i(k,M)\to\prod_{v\in\Omega}H^i(k_v,M)\big)$.
Une variété~$X$ sur~$k$ vérifie \emph{l'approximation faible}
si~$X(k)$ est dense dans $\prod_{v\in\Omega}X(k_v)$ pour la topologie produit.
(On ne suppose pas que~$X(k)$ est non vide.)
Lorsque~$X$ est propre et lisse,
l'application du foncteur
$M\mapsto \smash[t]{\mkern1mu\widehat{\mkern-1muM\mkern-1mu}\mkern1mu}=\varprojlim_{n\geq 1}M/nM$
au complexe~\eqref{se:e} fournit, compte tenu que $\Br(X)$ est un groupe de torsion, un complexe
\begin{align}
\label{se:egeneral}
\xymatrix@C=1em{
\CHzhat(X) \ar[r] & \CHzAhat(X) \ar[r] & \Hom(\Br(X),\Q/\Z)\rlap{,}
}
\end{align}
où l'on a posé
$\CH_{0,\A}(X)=\prod_{v \in \Omega_f} \CH_0(X_{k_v}) \times \prod_{v \in \Omega_\infty}
{\CH_0(X_{k_v})}/{N_{\bark_v/k_v}(\CH_0(X_{\bark_v}))}$.
On dit que~$X$ \emph{vérifie la conjecture~$(E)$} si le complexe~\eqref{se:egeneral} est exact.

\begin{rappel}
\label{rappel:rce}
Si~$X$ est une variété propre, lisse et rationnellement connexe, sur un corps de nombres~$k$,
l'exactitude du complexe~\eqref{se:e} équivaut à celle du complexe~\eqref{se:egeneral}.
\end{rappel}

\begin{proof}
Supposons~$X$ non vide, notons $\epsilon:X\to \Spec(k)$ le morphisme structural
et considérons le diagramme commutatif
\begin{align}
\begin{aligned}
\label{diag:rappel}
\xymatrix@R=3ex@C=1.196em{
0 \ar[r] & \CH_0(X) \ar[d] \ar[r] & \CHzhat(X) \ar[d] \ar[r]^(.225){\epsilon_*} & \Coker\big(\CH_0(\Spec(k))\to \CHzhat(\Spec(k))\big) \ar[r]
\ar[d] & 0 \\
0 \ar[r] & \CH_{0,\A}(X) \ar[r] & \CHzAhat(X) \ar[r]^(.225){\epsilon_*} & \Coker\big(\CH_{0,\A}(\Spec(k))\to\CHzAhat(\Spec(k))\big) \ar[r] & 0\rlap{.}
}
\end{aligned}
\end{align}

Vérifions l'exactitude des lignes.
Les homomorphismes
$\epsilon_*:\CH_0(X) \to \CH_0(\Spec(k))$ et $\epsilon_*:\CH_{0,\A}(X) \to \CH_{0,\A}(\Spec(k))$
ont leur noyau et leur conoyau d'exposant fini (voir \cite[Proposition~11]{ctfinitude}
et \cite[Lemme~2.4]{wittdmj}).
Comme les groupes
$\CH_0(\Spec(k))$ et $\CH_{0,\A}(\Spec(k))$ n'ont pas d'élément infiniment divisible non nul
et comme leur torsion est finie et annulée par~$2$,
l'exactitude à gauche et au milieu des lignes de~\eqref{diag:rappel}
s'ensuit (voir \cite[Lemme~1.11 et Lemme~1.12]{wittdmj}).
Comme $\epsilon_*:\CHzhat(X)\to \CHzhat(\Spec(k))$
et $\epsilon_*:\CHzAhat(X) \to \CHzAhat(\Spec(k))$ ont aussi leur conoyau d'exposant fini
(même preuve que \cite[Lemme~2.4]{wittdmj}) et comme les groupes de droite
de~\eqref{diag:rappel} sont divisibles (voir \cite[Lemme~1.10]{wittdmj}),
les flèches horizontales de droite de~\eqref{diag:rappel} ont leur conoyau à la fois
divisible et d'exposant fini, donc nul.  Ainsi les lignes de~\eqref{diag:rappel} sont-elles bien exactes.

La flèche verticale de droite du diagramme~\eqref{diag:rappel} est visiblement injective
et son image contient la classe de $\epsilon_*\zhat_\A$
pour tout $\zhat_\A \in \CHzAhat(X)$ orthogonal à~$\Br(X)$
(voir \cite[Remarque~1.1~(i)]{wittdmj}).  Ces remarques, jointes
à l'exactitude des lignes, entraînent la conclusion voulue par une chasse au diagramme.
\end{proof}

\section{Descente sous un tore et groupe de Brauer transcendant}
\label{sec:descente}

\subsection{Énoncés}

Soit~$X$ une variété propre, lisse et rationnellement connexe, sur un corps de nombres~$k$.
La théorie de la descente de Colliot-Thélène et Sansuc \cite{ctsandescent2}
affirme que
pour tout torseur $f:Y\to X$ sous un tore~$T$ sur~$k$,
on a une inclusion
\begin{align}
\label{eq:inclusion descente classique}
X(\A_k)^{\Br_1(X)} \subset \bigcup_{[\sigma]\in H^1(k,T)} f^\sigma(Y^\sigma(\A_k))
\end{align}
de sous-ensembles de $X(\A_k)$.
Si~$f$ est un torseur universel,
le groupe de Brauer non ramifié
algébrique de~$Y^\sigma$ est réduit aux constantes pour tout~$\sigma$
(\emph{op.\ cit.}, Théorème~2.1.2).
Si de plus~$X$ est géométriquement rationnelle,
on a $\Br(X)=\Br_1(X)$
et $\Br_\nr(Y^\sigma)=\Br_0(Y^\sigma)$ pour tout~$\sigma$,
si bien que
l'inclusion~\eqref{eq:inclusion descente classique}
ramène la conjecture~\ref{conj:ctrc}
pour~$X$ à la même conjecture pour les compactifications lisses des variétés~$Y^\sigma$.
Si en revanche~$X$ est seulement supposée rationnellement connexe,
le groupe de Brauer non ramifié de~$Y^\sigma$ peut contenir des classes transcendantes,
auquel cas l'inclusion~\eqref{eq:inclusion descente classique} ne permet plus une telle réduction.
Le but du \textsection\ref{sec:descente} est de démontrer le théorème
suivant, qui améliore~\eqref{eq:inclusion descente classique} et rend possible
la descente sur les variétés rationnellement connexes.

\begin{thm}
\label{th descente}
Soit $X$ une variété lisse et géométriquement irréductible
sur un corps de nombres~$k$.
Soit~$T$ un tore sur~$k$.
Soit $f:Y\to X$ un torseur sous~$T$.
Notons $A \subset \Br(X)$ l'image réciproque de
 $\Br_{\nr}(Y) \subset \Br(Y)$
par $f^*:\Br(X)\to\Br(Y)$.
Alors
\begin{align}
\label{eq:inclusion du th descente}
X(\A_k)^A
\subset
\bigcup_{[\sigma] \in H^1(k,T)} f^\sigma\Big(Y^\sigma(\A_k)^{\Br_{\nr}(Y^\sigma)}\Big)\rlap{\text{.}}
\end{align}
\end{thm}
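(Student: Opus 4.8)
The plan is to start from the classical descent inclusion~\eqref{eq:inclusion descente classique} and to upgrade it, twist by twist, by keeping track of the non-ramified Brauer group rather than just the algebraic one. Concretely, fix a point $x_\bullet=(x_v)\in X(\A_k)^A$. Since $A$ contains $\Br_1(X)$ (because $f^*$ maps $\Br_1(X)$ into $\Br_1(Y)\subset\Br_\nr(Y)$ as $Y$ is geometrically a product of $X$ with a torus), the classical inclusion already gives some $[\sigma]\in H^1(k,T)$ and an adelic point $y_\bullet\in Y^\sigma(\A_k)$ with $f^\sigma(y_\bullet)=x_\bullet$. The issue is to choose $[\sigma]$ so that $y_\bullet$ can moreover be taken orthogonal to $\Br_\nr(Y^\sigma)$. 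The key structural input is the standard compatibility of the Brauer–Manin pairing with torsor twists: for $\alpha\in\Br(X)$ lying in $A$ (so $f^*\alpha$ is non-ramified on $Y$, hence, after twisting, $(f^\sigma)^*\alpha$ is non-ramified on $Y^\sigma$), and for any $y\in Y^\sigma(k_v)$ above $x_v$, one has $\inv_v\big((f^\sigma)^*\alpha(y)\big)=\inv_v\big(\alpha(x_v)\big)$; summing over $v$, the class $y_\bullet$ is automatically orthogonal to $(f^\sigma)^*A$. So what must be arranged is orthogonality to the part of $\Br_\nr(Y^\sigma)$ \emph{not} coming from $X$.

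The mechanism for this is the fibration of the set of twists by $H^1(k,T)$ together with the exact sequences of descent theory (Colliot-Thélène–Sansuc, \cite[Chapter~2]{skobook}): as $[\sigma]$ ranges over $H^1(k,T)$, the images $f^\sigma(Y^\sigma(\A_k))$ partition $\bigcup_\sigma f^\sigma(Y^\sigma(\A_k))$, and the obstruction to lifting $x_\bullet$ to a \emph{given} twist, versus being orthogonal to the relevant Brauer classes, is governed by a Poitou–Tate type duality for the torus $T$ and for the complex computing $\Br_\nr(Y)/\Br_\nr(X)$-type quotients. I would make this precise by first reducing to the case where $X$ is \emph{smooth and geometrically irreducible} but not necessarily proper—which is exactly the generality stated—using a smooth compactification $X\hookrightarrow X^c$ and the birational invariance of $\Br_\nr$; one extends $f$ to a torsor, or at least to a suitable model, over (an open of) $X^c$, and transfers the statement. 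Then, over the proper model, I would invoke~\eqref{eq:inclusion descente classique} for the algebraic part and analyze the discrepancy between $\Br_\nr(Y^\sigma)$ and its algebraic subgroup via the exact sequence relating $\Br(Y^\sigma)$, $\Br_1(Y^\sigma)$ and $H^1(k,\Pic(\overline{Y^\sigma}))$, noting that $\overline{Y^\sigma}\simeq\overline Y$ so the transcendental part is twist-independent as a group even though the pairing with it is not.

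The main obstacle, and the technical heart, is precisely this last point: showing that one can \emph{simultaneously} make the lifted adelic point orthogonal to the transcendental (non-algebraic) non-ramified classes. The transcendental classes of $\Br_\nr(Y)$ need not be killed by passing to a twist, and they are not controlled by $H^1(k,T)$ in any naive way. I expect the resolution to go through the following observation: a transcendental class $\beta\in\Br_\nr(Y^\sigma)$, evaluated on $y_\bullet\in Y^\sigma(\A_k)$ lying above $x_\bullet$, gives an element of $\Q/\Z$ that, as one varies the lift $y_\bullet$ within the fibres of $f^\sigma$ over the fixed $x_\bullet$, varies through a coset of the image of the local evaluation maps of $\beta$ on $T$-torsors—i.e. through a subgroup detected by $H^1(k_v,T)$—so that the \emph{global} pairing $\sum_v\inv_v\beta(y_v)$ depends only on the class $[\sigma]\in H^1(k,T)$ and defines a homomorphism $H^1(k,T)\to\Q/\Z$ whose vanishing locus is what we need to hit. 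Combining Poitou–Tate duality for $T$ (which makes $H^1(k,T)$ into a group in perfect duality, up to Tate–Shafarevich groups, with $H^1(k,\widehat T)$) with the fact that $x_\bullet$ lies in $X(\A_k)^A$—which forces the relevant homomorphism to be trivial on the subgroup of $H^1(k,T)$ accounting for classes pulled back from $X$—one gets that the vanishing locus is nonempty, indeed a coset, and any $[\sigma]$ in it works. Assembling these pieces, i.e.\ checking that the homomorphism on $H^1(k,T)$ is well-defined, that the reduction to the proper case is legitimate, and that the duality bookkeeping closes up, yields~\eqref{eq:inclusion du th descente}.
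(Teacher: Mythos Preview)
Your overall strategy---classical descent first, then an adjustment of the twist governed by Poitou--Tate duality---matches the paper's. But the proposal, as written, has real gaps in the ``adjustment'' half.

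First, a minor point: the claim that $A\supset\Br_1(X)$ because ``$\Br_1(Y)\subset\Br_\nr(Y)$'' is false; there is no such inclusion in general. What makes the classical step go through is simply that the cup-products $H^1(k,\widehat T)\smile[f]$ lie in $\Ker(f^*)\subset A$, which is what the paper uses (Proposition~\ref{prop:descente etape 1}).

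The substantive gap is in your treatment of the transcendental classes. You assert that the global pairing $\sum_v \inv_v\beta(y_v)$, for $\beta\in\Br_\nr(Y^\sigma)$, ``depends only on $[\sigma]\in H^1(k,T)$''. This is not true: varying the lift $y_v$ inside the fibre changes the sum by $\sum_v(\phi^\sigma(\beta)\smile t_v)$ for $t_v\in T(k_v)$, which vanishes only when $\phi^\sigma(\beta)\in\Sha^2(k,\widehat T)$. So the well-defined invariant $\epsilon_\sigma$ lives only on the subgroup $\Sha^2(k,\widehat T)\cap\phi^0(\Br_\nr(Y))$ of $H^2(k,\widehat T)$, and dually the twist $\sigma$ you are allowed to move ranges only over $\Sha^1(k,T)$ (otherwise $Y^\sigma$ need not even have an adelic point over $(x_v)$). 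The paper makes both restrictions explicit and proves the variation formula $\epsilon_\sigma=\epsilon_0-\langle-,[\sigma]\rangle$ via the Poitou--Tate pairing (Proposition~\ref{prop:variation epsilon sigma}). This requires a genuinely new ingredient you do not mention: showing that $\phi^\sigma(\Br_\nr(Y^\sigma))$ is \emph{independent of~$\sigma$} (Proposition~\ref{prop:descente prelim}~(ii)), which is what allows one to compare $\epsilon_\sigma$ and $\epsilon_0$ on a common domain.

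Consequently, your single Poitou--Tate step is not enough. The paper needs \emph{two} separate adjustments: first, choose $[\sigma]\in\Sha^1(k,T)$ so that $\epsilon_\sigma=0$ on $\Sha^2(k,\widehat T)\cap\phi^0(\Br_\nr(Y))$; second, with $\sigma$ now fixed, move the lift $(y_v)$ inside the fibres using $T(\A_k)$ to kill the pairing with the remaining classes of $\Br_\nr(Y)$ whose image in $H^2(k,\widehat T)$ is \emph{not} in $\Sha^2$. This second step (Proposition~\ref{prop:existence yv}) uses Harari's exact sequence $T(\A_k)\to\Hom(H^2(k,\widehat T),\Q/\Z)\to\Hom(\Sha^2(k,\widehat T),\Q/\Z)$ and is absent from your sketch. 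Without it, $\epsilon_\sigma=0$ alone does not yield a lift orthogonal to all of $\Br_\nr(Y^\sigma)$.

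Finally, the detour through a smooth compactification of~$X$ that you propose is unnecessary (the paper works directly on the open~$X$ via Harari--Skorobogatov's open descent) and would create difficulties of its own, since torsors under tori do not in general extend across the boundary.
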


Bien entendu, si~$X$ est propre, alors $A=\Br(X)=\Br_{\nr}(X)$.

Le théorème~\ref{th descente}
peut aussi se déduire de travaux récents de Cao \cite[Theorem~5.9]{caoapproxforte}.
La démonstration que nous donnons ci-dessous fut obtenue indépendamment.
D'autre part,
on trouvera
dans \cite[Theorem~1.2]{caodemarchexu}
et \cite[Theorem~1.7]{weiopendescent}
des prédécesseurs du théorème~\ref{th descente}, dans lesquels $\Br_\nr(Y^\sigma)$
est remplacé par $\Br_\nr(Y^\sigma) \cap \Br_1(Y^\sigma)$.

\begin{cor}
\label{cor:descente}
Soit $X$ une variété propre, lisse et rationnellement connexe,
sur un corps de nombres~$k$.
Soit~$T$ un tore sur~$k$.
Soit $V \subset X$ un ouvert dense.
Soit $f:W\to V$ un torseur sous~$T$.
Alors
$X(\A_k)^{\Br(X)}$
est contenu dans l'adhérence de
\begin{align*}
\bigcup_{[\sigma] \in H^1(k,T)} f^\sigma\Big(W^\sigma(\A_k)^{\Br_{\nr}(W^\sigma)}\Big)
\end{align*}
dans $X(\A_k)$.
\end{cor}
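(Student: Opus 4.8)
The plan is to deduce the corollary from Theorem~\ref{th descente} by extending the torseur $f : W \to V$ to a torseur over a (smooth) compactification, controlling the discrepancy between the two by a density argument. First I would choose a compactification. By the standard resolution fact recalled in the notations, there is a smooth compactification $Y$ of $W$ together with a morphism $g : Y \to X$ extending $W \hookrightarrow Y \to V \hookrightarrow X$; restricting, $g$ is a torseur under $T$ over some dense open $V' \subset X$ (we may take $V' \subset V$ and $g^{-1}(V') = W'$ a dense open of $W$, with $W'$ itself a dense open of $Y$). The point is that $\Br_\nr(W^\sigma) = \Br_\nr(W'^\sigma) = \Br_\nr((W^\sigma)') = \Br(Y^\sigma)$ for every $\sigma$, since the non-ramified Brauer group is a birational invariant and $Y^\sigma$ is a smooth compactification of all three. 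So applying Theorem~\ref{th descente} to the torseur $g|_{V'} : W' \to V'$ (with $X$ there replaced by $V'$, and noting that since $V'$ is smooth we may then re-enlarge: the set $A$ of Theorem~\ref{th descente} applied over $V'$ is the preimage of $\Br_\nr(W') = \Br_\nr(Y)$, which contains the restriction of $\Br(X) = \Br_\nr(X) = \Br_\nr(V')$) yields
\[
V'(\A_k)^{\Br(X)} \subset \bigcup_{[\sigma]\in H^1(k,T)} g^\sigma\Big((W^\sigma)'(\A_k)^{\Br_\nr(W^\sigma)}\Big).
\]

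Next I would transport this into the statement about $X(\A_k)^{\Br(X)}$. The inclusion $V' \hookrightarrow X$ and properness of $X$ give that $V'(\A_k)$ is dense in $X(\A_k)$; more precisely, since $X$ is proper, $V'(\A_k)$ contains $\prod_{v} V'(k_v)$ and, $X$ being smooth, $V'(k_v)$ is dense in $X(k_v)$ for each $v$ (by the implicit function theorem, $V'$ being open dense in the smooth $X$). Moreover an adelic point of $X$ orthogonal to $\Br(X)$ can be approximated by adelic points of $V'$ that remain orthogonal to $\Br(X)$: this is the routine observation that $X(\A_k)^{\Br(X)}$ is the closure in $X(\A_k)$ of $V'(\A_k)^{\Br(X)}$ — indeed given $(x_v) \in X(\A_k)^{\Br(X)}$, pick a finite set $S$ of places large enough that the $\Br(X)$-pairing is controlled outside $S$, and move the components at $v \in S$ slightly into $V'(k_v)$ without changing the sum of local invariants, since $\Br(X)$ is finite and the local pairings are locally constant. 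Hence $X(\A_k)^{\Br(X)}$ is contained in the closure of $V'(\A_k)^{\Br(X)}$, which by the displayed inclusion is contained in the closure of $\bigcup_{[\sigma]} g^\sigma\big((W^\sigma)'(\A_k)^{\Br_\nr(W^\sigma)}\big)$.

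Finally I would compare $g^\sigma$ on $(W^\sigma)'$ with $f^\sigma$ on $W^\sigma$. On the dense open $W' = g^{-1}(V') \subset W$ the torseurs agree: $g|_{W'} = f|_{W'}$ as torseurs under $T$ over $V'$, and this identification is compatible with twisting by $\sigma \in Z^1(k,T)$, so $(W^\sigma)'(\A_k)^{\Br_\nr(W^\sigma)} \subset W^\sigma(\A_k)^{\Br_\nr(W^\sigma)}$ and $g^\sigma = f^\sigma$ on this subset. Therefore
\[
\bigcup_{[\sigma]\in H^1(k,T)} g^\sigma\Big((W^\sigma)'(\A_k)^{\Br_\nr(W^\sigma)}\Big) \subset \bigcup_{[\sigma]\in H^1(k,T)} f^\sigma\Big(W^\sigma(\A_k)^{\Br_\nr(W^\sigma)}\Big),
\]
and taking closures gives exactly the assertion of the corollary. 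The main obstacle, and the step deserving the most care, is the density argument: one must check that perturbing finitely many local components of a $\Br(X)$-orthogonal adelic point into the dense open $V'$ can be done while preserving orthogonality to the (finite) group $\Br(X)$ — equivalently that $\Br(X)$ evaluated on $V'(k_v)$ takes only finitely many values locally constant in the point, which follows from $\Br(X)$ being finite together with the continuity of the evaluation map $V'(k_v) \to \Br(k_v)$; everything else is birational invariance of $\Br_\nr$ and the naturality of twisting, which are formal.
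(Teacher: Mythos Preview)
There is a genuine gap in the central step. When you apply Theorem~\ref{th descente} to the torsor $g|_{V'}:W'\to V'$, the conclusion it gives is
\[
V'(\A_k)^{A}\ \subset\ \bigcup_{[\sigma]\in H^1(k,T)} g^\sigma\Big((W^\sigma)'(\A_k)^{\Br_\nr(W^\sigma)}\Big),
\]
where $A\subset \Br(V')$ is the full preimage of $\Br_\nr(W')$ under $(f|_{W'})^*$. You observe correctly that $\Br(X)|_{V'}\subset A$, but this inclusion goes the wrong way for your purposes: it yields $V'(\A_k)^{A}\subset V'(\A_k)^{\Br(X)}$, not the reverse. The displayed line you write, with $\Br(X)$ on the left, simply does not follow from the theorem. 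Your density argument in the next paragraph only shows that an adelic point orthogonal to $\Br(X)$ can be moved into $V'(\A_k)^{\Br(X)}$; it gives no control over the extra classes in $A\setminus \Br(X)|_{V'}$, which are ramified on~$X$ and for which local evaluations need not be locally constant on $V'(k_v)$.

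This is exactly the obstacle the paper's proof addresses. One must first prove that $A/\Br_0(X)$ is \emph{finite}: the paper does this by extending~$f$ to a morphism $g:Y\to X$ from a smooth compactification, removing a codimension~$\geq 2$ locus to make~$g$ flat, invoking \cite[Lemma~3.1]{ctskodescent} on the resulting morphism to get finiteness of $A/\Br(X)$, and then using rational connectedness of~$X$ to get finiteness of $\Br(X)/\Br_0(X)$. Once $A/\Br_0(X)$ is known to be finite, Harari's ``lemme formel'' (as in \cite[Proposition~1.1]{ctskodescent}) is what allows one to pass from orthogonality to $\Br(X)$ on~$X$ to orthogonality to the larger group~$A$ on~$V$, up to taking a closure. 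Without this finiteness result and the formal lemma, the bridge from $X(\A_k)^{\Br(X)}$ to $V'(\A_k)^{A}$ is missing, and the argument does not go through. (A minor side remark: $\Br(X)$ itself is not finite, only $\Br(X)/\Br_0(X)$ is; but this is inessential since constants pair trivially with adelic points.)
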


\begin{proof}[Démonstration du corollaire~\ref{cor:descente}]
Appliquons le théorème~\ref{th descente} à~$V$.
Soit $A \subset \Br(V)$ l'image réciproque de $\Br_\nr(W)$ par $f^*:\Br(V)\to\Br(W)$.
Choisissons une compactification lisse~$Y$ de~$W$ telle que~$f$ s'étende
en un morphisme $g:Y\to X$.

Le morphisme~$g$ est plat au-dessus du complémentaire d'un
fermé $F \subset X$ de codimension~$\geq 2$.
Posons $X^0=X \setminus F$
et $Y^0=Y \setminus g^{-1}(F)$
et notons $g^0:Y^0\to X^0$ la restriction de~$g$.
Comme $H^1(\bark(V),T)\simeq H^1(\bark(V),\Gm^r)=0$,
le morphisme $g^0\otimes_k\bark$ admet une section rationnelle.
Par conséquent, les fibres de~$g^0$ au-dessus des points de codimension~$1$
de~$X$ contiennent toutes une composante irréductible de multiplicité~$1$.
Appliquant (la preuve de) \cite[Lemma~3.1]{ctskodescent} à~$g^0$,
on en déduit que le groupe $A/\Br(X^0)$ est fini.
(Ce lemme affirme la finitude
de l'image de ce groupe dans $\Br(Y^0)/g^{0*}\Br(X^0)$ mais la preuve
donnée dans \emph{loc.\ cit.}\ est en réalité une preuve de la finitude de $A/\Br(X^0)$.)

Comme~$F$ est de codimension~$\geq 2$, l'inclusion $\Br(X) \subset \Br(X^0)$
est une égalité
(voir \cite[Corollaire~6.2]{brauerIII}).
Comme~$X$ est rationnellement connexe, le groupe $\Br(X)/\Br_0(X)$ est fini
(voir \cite[Lemma~1.3]{ctskogoodreduction}).
Le groupe $A/\Br_0(X)$ est donc fini.
Il en résulte, par le ``lemme formel'' de Harari,
que $X(\A_k)^{\Br(X)}$ est contenu
dans l'adhérence de $V(\A_k)^A$
dans~$X(\A_k)$
(voir \cite[Proposition~1.1]{ctskodescent}).
On conclut avec l'inclusion~\eqref{eq:inclusion du th descente} pour~$V$.
\end{proof}

\subsection{Remarques préliminaires}
\label{subsec:lemmes prelim}

Dans le~\textsection\ref{subsec:lemmes prelim}, le corps~$k$ est un corps de caractéristique nulle
quelconque et~$X$, $T$, $Y$, $f$
sont comme dans le théorème~\ref{th descente}.
La notation suivante sera utile:
si $M \to X$ est un morphisme de variétés,
on désignera par $\Br_1(M/X)$ le sous-groupe de~$\Br(M)$ constitué des classes
dont l'image réciproque dans $\Br(M_{\bark})$ provient de $\Br(X_{\bark})$.
Remarquons que $\Br_\nr(Y^\sigma) \subset \Br_1(Y^\sigma/X)$
puisque $Y^\sigma_\bark$ et $X^{\vphantom{0}}_\bark \times \P^r_{\bark}$
sont birationnellement équivalents au-dessus de~$X_\bark$ pour $r=\dim(T)$.

Soit $\sigma \in Z^1(k,T)$.
Comme $f^\sigma:Y^\sigma \to X$ est un torseur sous~$T$, on a une suite exacte
\begin{align}
\label{se:ysigma}
1\to\Gm\to f^\sigma_*\Gm \to \widehat T \to 1
\end{align}
de faisceaux étales sur~$X$,
où~$\widehat T$ désigne le groupe des caractères du tore~$T$
(voir \cite[Proposition~1.4.2]{ctsandescent2}).
D'autre part,
on a $R^1 f^\sigma_*\Gm=0$
puisque $Y^\sigma \simeq X \times \Gm^r$
localement sur~$X$ pour la topologie étale,
et l'on a aussi $H^1(X_{\bark},\widehat T)\simeq H^1(X_{\bark},\Z^r)=0$.
Il s'ensuit, au vu des suites spectrales de Leray pour~$f^\sigma$ et de Hochschild--Serre pour~$X$,
que
$H^2(X,f^\sigma_*\Gm)=\Ker\mkern1mu\big(\Br(Y^\sigma)\to H^0(X,R^2f^\sigma_*\Gm)\big)$,
$H^2(X_{\bark},f^\sigma_*\Gm)=\Ker\mkern1mu\big(\Br(Y_{\bark}^\sigma)\to H^0(X_{\bark},R^2f^\sigma_*\Gm)\big)$
et
$\Ker\mkern1mu\big(H^2(X,\widehat T)\to H^2(X_{\bark},\widehat T)\big)=H^2(k,\widehat T)$.
Appliquant les foncteurs $H^2(X,-)$,
 $H^3(X,-)$ et $H^2(X_{\bark},-)$ à~\eqref{se:ysigma}, on déduit alors,
compte tenu que
$H^0(X,R^2f^\sigma_*\Gm)$ s'injecte dans $H^0(X_\bark,R^2f^\sigma_*\Gm)$,
d'abord une identification entre le groupe $\Br_1(Y^\sigma/X)$
et le noyau de l'application composée
$H^2(X,f^\sigma_*\Gm) \to H^2(X_\bark,f^\sigma_*\Gm) \to H^2(X_\bark,\widehat T)$,
puis
une suite exacte
\begin{align}
\label{se:remarques prelim}
\xymatrix{
\Br(X) \ar[r]^(.4){(f^\sigma)^*} & \Br_1(Y^\sigma/X) \ar[r]^(.53){\phi^\sigma} & H^2(k,\widehat T) \ar[r]^(.46)\delta & H^3(X,\Gm)\rlap{.}
}
\end{align}

\begin{prop}
\label{prop:descente prelim}
Notant comme ci-dessus $\sigma$ un élément de $Z^1(k,T)$, on a:
\begin{enumerate}
\setlength\itemsep{0em}
\vspace*{-.1em}
\item[(i)]
Si $H^3(k,\Gm)=0$, l'application~$\delta$ ne dépend pas de~$\sigma$.
\item[(ii)]
Si $H^3(k,\Gm)=0$, l'image de $\Br_\nr(Y^\sigma)$ par~$\phi^{\sigma}$
ne dépend pas de~$\sigma$.
\item[(iii)]
L'image réciproque de
$\Br_{\nr}(Y^\sigma)$
par $(f^\sigma)^*$
ne dépend pas de~$\sigma$, \emph{i.e.}\ est égale à~$A$.
\item[(iv)]
Pour $y \in Y^\sigma(k)$, $t \in T(k)$ et $\alpha^\sigma \in \Br_1(Y^\sigma/X)$,
on a $\alpha^\sigma(t\cdot y)=\alpha^\sigma(y) + (\phi^\sigma(\alpha^\sigma)\smile t)$.
\end{enumerate}
\end{prop}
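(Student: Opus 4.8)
The plan is to analyze the four assertions essentially in sequence, the key being the exact sequence~\eqref{se:remarques prelim} together with a comparison of how the various objects attached to~$\sigma$ and to the trivial cocycle~$0$ are related by translation. First I would set up, for a given $\sigma \in Z^1(k,T)$, a comparison between $Y^\sigma$ and~$Y=Y^0$. Over~$\bark$ the cocycle~$\sigma$ becomes trivial, so there is a canonical $X_\bark$-isomorphism $Y^\sigma_\bark \simeq Y_\bark$ compatible with the $T_\bark$-actions; this identifies the restriction of~\eqref{se:ysigma} for~$\sigma$ with that for~$0$ over~$X_\bark$, whence $H^2(X_\bark, f^\sigma_*\Gm) = H^2(X_\bark, f_*\Gm)$ and likewise $\Br(Y^\sigma_\bark) = \Br(Y_\bark)$ compatibly. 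Consequently the subgroup $\Br_1(Y^\sigma/X) \subset \Br(Y^\sigma)$, being defined by a condition after base change to~$\bark$, sits in the same ambient data, and the map $H^2(X,f^\sigma_*\Gm) \to H^2(X_\bark,\widehat T)$ used to identify $\Br_1(Y^\sigma/X)$ factors through the $\sigma$-independent $H^2(X_\bark,f_*\Gm) \to H^2(X_\bark,\widehat T)$.

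For~(i): under the hypothesis $H^3(k,\Gm)=0$, the Hochschild--Serre spectral sequence for~$X$ gives $H^3(k,\widehat T) \to H^3(X,\widehat T)$ and a description of the image of~$H^2(k,\widehat T)$; more to the point, the connecting map~$\delta$ in~\eqref{se:remarques prelim} is induced by the extension class of~\eqref{se:ysigma} in $\mathrm{Ext}^1_X(\widehat T,\Gm)$. I would show that the class of~\eqref{se:ysigma} differs, as~$\sigma$ varies, only by the image of a class in $H^1(k,\widehat T)^{\vee}$-type data that dies in~$H^3(X,\Gm)$ precisely because $H^3(k,\Gm)=0$ (the difference of the two torsors $Y^\sigma$ and~$Y$ is classified by~$[\sigma] \in H^1(k,T)$, which under $\delta$-type boundary maps lands in the image of $H^3(k,\Gm)$). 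Hence~$\delta$ is independent of~$\sigma$. For~(ii): again assuming $H^3(k,\Gm)=0$, so that by~(i) the map~$\delta$ is fixed and $\Br_1(Y^\sigma/X) = (\phi^\sigma)^{-1}\ker(\delta)$ with $\phi^\sigma$ identified via the base-change comparison above with a fixed map up to the $X_\bark$-identification, I would check that the image $\phi^\sigma(\Br_\nr(Y^\sigma))$ is computed purely from data over~$\bark$ and from the residues along divisors, both of which are transported isomorphically between $Y^\sigma$ and~$Y$ by the canonical $\bark$-isomorphism (which is defined over a finite extension splitting~$\sigma$, hence respects unramifiedness). For~(iii): the image under $(f^\sigma)^*$ is, by exactness of~\eqref{se:remarques prelim}, the kernel of~$\phi^\sigma$ restricted to~$\Br_1(Y^\sigma/X)$; one shows $(f^\sigma)^{*-1}(\Br_\nr(Y^\sigma)) = \ker(\phi^\sigma|_{(f^\sigma)^*\Br(X)})$, and since $(f^\sigma)^*\Br(X) \subset \Br_\nr(Y^\sigma)$ always (a torseur under a torus is birationally $X \times \P^r$ locally, so pulls unramified classes back to unramified classes), this image is simply $\Br(X)$ mapped in, independently of~$\sigma$; matching with~$A$ is then the definition of~$A$ together with the $\sigma=0$ case. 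Assertion~(iv) is the most concrete: for $y \in Y^\sigma(k)$ and $t \in T(k)$, translation $t\cdot(-) : Y^\sigma \to Y^\sigma$ is an $X$-automorphism, and one computes its effect on $\alpha^\sigma \in \Br_1(Y^\sigma/X)$ by pulling back~\eqref{se:ysigma}; the cup-product term $\phi^\sigma(\alpha^\sigma)\smile t \in H^2(k,\widehat T) \times T(k) \to \Br(k) \hookrightarrow H^2(k,\Gm)$ arises exactly as the difference of the two evaluations, which is the standard formula for the action of a torus on the Brauer group of its torsors (as in \cite[Proposition~1.4.2]{ctsandescent2} and its consequences).

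The main obstacle will be~(iv), or rather the bookkeeping that makes it precise: one must pin down the compatibility between the map $\phi^\sigma : \Br_1(Y^\sigma/X) \to H^2(k,\widehat T)$ coming from~\eqref{se:remarques prelim} and the explicit cocycle-level description of how a class in $\Br(Y^\sigma)$ that becomes constant over~$\bark$ transforms under a rational-point translation by~$T(k)$. The cleanest route is probably to reduce~(iv) to the universal case $X = \Spec k$, $Y^\sigma = T$-torsor over a point, where the formula $\alpha(t\cdot y) = \alpha(y) + (\text{class})\smile t$ is the defining property of the boundary map in the Kummer-type sequence for a torus, and then to deduce the general case by functoriality of~\eqref{se:ysigma} under the structural morphism $X \to \Spec k$ and the projection formula for the cup product. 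For~(i)--(iii) the subtlety is purely that ``unramified'' and the various kernels are only manifestly intrinsic after base change to~$\bark$, so every comparison must be routed through the canonical $\bark$-isomorphism $Y^\sigma_\bark \simeq Y_\bark$ and its descent to a finite splitting field of~$\sigma$; once that discipline is imposed, the statements fall out of~\eqref{se:remarques prelim}.
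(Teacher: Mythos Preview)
Your treatment of~(iv) is correct and is exactly the paper's approach: restrict to the fibre $Z=(f^\sigma)^{-1}(f^\sigma(y))$, which is a $T$-torsor over~$\Spec(k)$, and verify the formula there. Your approach to~(i) is also in the right spirit; the paper makes it concrete by invoking \cite[Proposition~1.4.3]{ctsandescent2}, which gives $\delta(x)=p^*x\smile[f^\sigma]$ up to a sign independent of~$\sigma$, and then observes $[f^\sigma]=[f]+p^*[\sigma]$ with $x\smile[\sigma]\in H^3(k,\Gm)=0$.

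However, your arguments for~(ii) and~(iii) have genuine gaps.

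For~(iii), the claim ``$(f^\sigma)^*\Br(X)\subset\Br_\nr(Y^\sigma)$ always'' is false: $X$~is not assumed proper, so $\Br(X)$ contains ramified classes, and pulling back along a torsor does not make them unramified. Worse, your conclusion that the preimage ``is simply $\Br(X)$ mapped in'' contradicts the statement itself, since~$A$ is in general a proper subgroup of~$\Br(X)$. The exact sequence~\eqref{se:remarques prelim} tells you nothing about~$\Br_\nr$; the unramifiedness condition is orthogonal to the kernel of~$\phi^\sigma$. The paper instead argues directly: let~$K$ be the function field of the $k$-torsor under~$T$ determined by~$\sigma$; then $Y\otimes_k K=Y^\sigma\otimes_k K$ canonically over~$X\otimes_k K$, so $(f^*\beta)\otimes_k K=((f^\sigma)^*\beta)\otimes_k K$, and since $K/k$ is a regular extension, a class in $\Br(Y)$ is unramified if and only if its image in $\Br(Y\otimes_k K)$ is.

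For~(ii), the assertion that $\phi^\sigma(\Br_\nr(Y^\sigma))$ ``is computed purely from data over~$\bark$'' is not justified and I do not see how to make it work: the image lives in $H^2(k,\widehat T)$, a group that is invisible over~$\bark$, and the $\bark$-isomorphism $Y^\sigma_\bark\simeq Y_\bark$ does not descend to~$k$, so it gives no direct comparison between the maps $\phi^\sigma$ and~$\phi^0$. The paper's argument is substantially more delicate. Given $\alpha^\sigma\in\Br_\nr(Y^\sigma)$, it works on the product $Y\times_k Z$ (with~$Z$ the $k$-torsor for~$\sigma$), which carries both a quotient map $\pi:Y\times_k Z\to Y^\sigma$ and projections to~$Y$ and~$Z$. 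Using~(i) and the surjectivity of the map~$\phi$ of the lemma for~$Z$ (since $H^3(k,\Gm)=0$), one chooses $\alpha^0\in\Br_1(Y/X)$ and $\alpha\in\Br_1(Z)$ with $\phi^0(\alpha^0)=\phi(\alpha)=\phi^\sigma(\alpha^\sigma)$; a diagram chase comparing the three push-forwards of~$\Gm$ then yields $\alpha^0\boxplus\alpha=\pi^*\alpha^\sigma$ up to a class from~$\Br(X)$. From this, a residue computation (using that~$Z$ is geometrically irreducible, so restriction of residues along $\xi\hookrightarrow\xi\times_k Z$ is injective) shows $\alpha^0\in\Br_\nr(Y)$, whence $\phi^\sigma(\alpha^\sigma)\in\phi^0(\Br_\nr(Y))$. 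This construction is the missing idea in your proposal.
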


\begin{proof}
Pour $x \in H^2(k,\widehat T)$,
on a $\delta(x)=p^*x \smile [f^\sigma]$ à un signe indépendant de~$\sigma$ près,
si $p:X\to \Spec(k)$ désigne le morphisme structural
et $[f^\sigma] \in H^1(X,T)$ la classe de~$f^\sigma$
(voir \cite[Proposition~1.4.3]{ctsandescent2}).
Comme $[f^\sigma]=[f]+p^*[\sigma]$
et comme $x \smile [\sigma] \in H^3(k,\Gm)$,
l'assertion~(i) s'ensuit.

L'assertion~(iv) résulte du lemme ci-dessous appliqué à $Z=(f^\sigma)^{-1}(f^\sigma(y))$.

\begin{lem}
\label{lem:ancien sous-lemme}
Soit~$Z$ un torseur sous~$T$, sur~$k$.
Soient $z \in Z(k)$, $t \in T(k)$,
$\alpha \in \Br_1(Z)$.
On a $\alpha(t\cdot z)=\alpha(z)+ (\phi(\alpha)\smile t)$,
où $\phi:\Br_1(Z) \to H^2(k,\widehat T)$ désigne l'application
induite par la suite exacte
$1\to \bark{}^*\to \bark[Z]^*\to \widehat T \to 1$
et par l'isomorphisme $\Br_1(Z)=H^2(k,\bark[Z]^*)$ issu de la suite spectrale de Hochschild--Serre.
De plus~$\phi$ est surjective si $H^3(k,\Gm)=0$.
\end{lem}

\begin{proof}
L'application $\Br_1(Z) \to \Br(k)$, $\alpha \mapsto \alpha(t\cdot z)-\alpha(z)$
est celle induite,
via l'isomorphisme $\Br_1(Z)=H^2(k,\bark[Z]^*)$, par 
l'application $\bark[Z]^*\to \bark{}^*$, $a \mapsto a(t\cdot z)/a(z)$.
Cette dernière se factorise par~$\widehat T$.
Par le lemme de Rosenlicht
\cite[Proposition~3]{rosenlicht},
la flèche $\widehat T \to \bark{}^*$ qui en résulte
est $t \in T(k)=\Hom(\widehat T,\Gm)$;
d'où $\alpha(t\cdot z)=\alpha(z)+(\phi(\alpha)\smile t)$.
\end{proof}

Pour établir~(ii), fixons $\sigma,\sigma' \in Z^1(k,T)$
et montrons que
$\phi^\sigma(\Br_\nr(Y^\sigma))\subset \phi^{\sigma'}(\Br_\nr(Y^{\sigma'}))$.
Quitte à remplacer~$f$ par~$f^{-\sigma'}$ et~$\sigma$ par~$\sigma-\sigma'$,
on peut supposer que~$\sigma'=0$.

Notons~$Z$ le torseur sous~$T$, sur~$k$, déterminé par~$\sigma$.
Notons $\pi:Y \times_k Z \to Y^\sigma$
l'application canonique.
Notons $\pr_1:Y\times_k Z \to Y$, $\pr_2:Y\times_k Z \to Z$,
$g:X \times_k Z \to X$ les projections
et posons $h =f \circ \pr_1 : Y\times_k Z \to X$.

Soit $\alpha^\sigma \in \Br_\nr(Y^\sigma)$.
Comme $H^3(k,\Gm)=0$, l'application~$\phi$ du lemme~\ref{lem:ancien sous-lemme} est surjective:
il existe $\alpha \in \Br_1(Z)$ tel que $\phi(\alpha)=\phi^\sigma(\alpha^\sigma)$.
D'après la proposition~\ref{prop:descente prelim}~(i),
les applications $\phi^0$ et $\phi^\sigma$ ont la même image:
il existe donc $\alpha^0 \in \Br_1(Y/X)$
tel que $\phi^0(\alpha^0)=\phi^\sigma(\alpha^\sigma)$.
Notons $\alpha^0\boxplus \alpha=\pr_1^*\alpha^0 + \pr_2^*\alpha \in \Br_1(Y\times_k Z/X)$.
Voyons respectivement
$\alpha^\sigma$, $\alpha^0$, $\alpha$,
$\alpha^0\boxplus\alpha$
 comme des éléments de
$H^2(X,f^{\sigma}_*\Gm)$,
$H^2(X,f_*\Gm)$,
$H^2(X,g_*\Gm)$,
$H^2(X,h_*\Gm)$.
Le diagramme commutatif à lignes exactes
\begin{align*}
\xymatrix@R=2.5ex{
1 \ar[r] & \Gm \ar@{=}[d] \ar[r] & f^{\sigma}_*\Gm \ar[d]^(.42){\pi^*} \ar[r] & \widehat T \ar[d]^(.42){\Delta} \ar[r] & 1 \\
1 \ar[r] & \Gm \ar[r] & h_*\Gm \ar[r] & \smash[t]{\widehat T \oplus \widehat T}\vphantom{T} \ar@{=}[d] \ar[r] & 1 \\
1 \ar[r] & \Gm \oplus \Gm \ar[u]^(.46)\Pi \ar[r] & f_*\Gm \oplus g_*\Gm \ar[u] \ar[r] & \smash[t]{\widehat T \oplus \widehat T}\vphantom{T} \ar[r] & 1
}
\end{align*}
montre l'existence de $\beta \in \Br(X)$
tel que $\alpha^0 \boxplus \alpha = \pi^*\alpha^{\sigma} + h^*\beta$
dans $\Br_1(Y \times_k Z / X)$.
Quitte à remplacer $\alpha^0$ par $\alpha^0+f^*\beta$, on peut supposer
que $\alpha^0 \boxplus \alpha = \pi^*\alpha^{\sigma}$.
Comme $\alpha^\sigma \in \Br_\nr(Y^\sigma)$, il s'ensuit que
$\alpha^0\boxplus \alpha\in \Br_\nr(Y\times_k Z)$.
On en déduit que $\alpha^0 \in \Br_\nr(Y)$; en effet,
si~$\xi$ est un point de codimension~$1$
d'une compactification lisse de~$Y$,
le résidu de $\alpha^0\boxplus\alpha$ au point générique de $\xi\times_k Z$
est l'image du résidu de~$\alpha^0$ en~$\xi$
par
la flèche de restriction
$H^1(k(\xi),\Q/\Z)\to H^1(k(\xi \times_k Z),\Q/\Z)$
(voir \cite[Proposition~1.1.1]{ctsd94}), laquelle est injective
puisque~$Z$ est géométriquement irréductible.
Comme $\phi^\sigma(\alpha^\sigma)=\phi^0(\alpha^0)$,
on a maintenant montré que $\phi^\sigma(\alpha^\sigma) \in  \phi^0(\Br_\nr(Y))$,
ce qui conclut la preuve de~(ii).

Vérifions~(iii).
Soit $\beta \in \Br(X)$.
Soit~$K$ le corps des fonctions du torseur sous~$T$, sur~$k$,
déterminé par $\sigma \in Z^1(k,T)$.
Vu l'isomorphisme canonique $Y \otimes_k K = Y^\sigma \otimes_k K$
de schémas sur $X\otimes_kK$,
on a $(f^*\beta) \otimes_k K \in \Br_\nr(Y \otimes_kK)$
si et seulement si $((f^\sigma)^*\beta) \otimes_k K \in \Br_\nr(Y^\sigma\otimes_kK)$.
Par ailleurs,
un élément de $\Br(Y)$ appartient à $\Br_\nr(Y)$ si et seulement si
son image dans $\Br(Y \otimes_kK)$ appartient à $\Br_\nr(Y \otimes_kK)$,
puisque l'extension~$K/k$ est régulière;
et de même pour~$Y^\sigma$.
Donc $f^*\beta \in \Br_\nr(Y)$
si et seulement si $(f^\sigma)^*\beta \in \Br_\nr(Y^\sigma)$.
\end{proof}

\subsection{Preuve du théorème~\ref{th descente}}

Fixons $(x_v)_{v\in\Omega}\in X(\A_k)^A$ et exhibons $\sigma \in Z^1(k,T)$
tel que $(x_v)_{v\in\Omega} \in f^\sigma\Big(Y^\sigma(\A_k)^{\Br_\nr(Y^\sigma)}\Big)$.
Commençons par exploiter l'orthogonalité à $A \cap \Br_1(X)$
à l'aide de la théorie de la descente de Colliot-Thélène et Sansuc sous sa forme classique:

\begin{prop}
\label{prop:descente etape 1}
Il existe $\sigma \in Z^1(k,T)$ tel que
$(x_v)_{v\in\Omega} \in f^\sigma(Y^\sigma(\A_k))$.
\end{prop}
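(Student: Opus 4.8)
The plan is to deduce Proposition~\ref{prop:descente etape 1} from the classical descent theory of Colliot-Thélène and Sansuc \cite{ctsandescent2}, using the orthogonality of $(x_v)_{v\in\Omega}$ only to the subgroup $A\cap\Br_1(X)$. Recall that this theory attaches to the torsor $f:Y\to X$ under~$T$ a subgroup $\mathscr{B}\subseteq\Br_1(X)$ with the property that $X(\A_k)^{\mathscr{B}}\subseteq\bigcup_{[\sigma]\in H^1(k,T)}f^\sigma(Y^\sigma(\A_k))$ (for~$X$ smooth and geometrically integral over a number field; see \cite[\textsection3]{ctsandescent2} or \cite[Chapter~6]{skobook}); concretely, $\mathscr{B}$ is the image of $H^1(k,\widehat T)$ under the homomorphism $b\mapsto\partial(p^*b)$, where $p:X\to\Spec(k)$ is the structural morphism and $\partial:H^1(X,\widehat T)\to H^2(X,\Gm)=\Br(X)$ is the boundary map attached to the exact sequence~\eqref{se:ysigma} taken with $\sigma=0$. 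It therefore suffices to check that $\mathscr{B}\subseteq A$: granting this, the point $(x_v)_{v\in\Omega}$, being orthogonal to~$A$, is orthogonal to~$\mathscr{B}$, and classical descent produces the desired cocycle~$\sigma$.

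The key point — essentially the only thing to verify — is that $\mathscr{B}$ is killed by $f^*$. By \cite[Propositions~1.4.2 et~1.4.3]{ctsandescent2}, the class of the extension~\eqref{se:ysigma} in $H^1(X,T)$ is the class $[f]$ of the torsor, so that for $b\in H^1(k,\widehat T)$ one has $\partial(p^*b)=p^*b\smile[f]$ (up to a fixed sign) in $\Br(X)$, the cup product being formed via the evaluation pairing $\widehat T\otimes_{\Z}T\to\Gm$. Pulling back along~$f$, one gets $f^*\partial(p^*b)=(f^*p^*b)\smile(f^*[f])$; and $f^*[f]=0$ in $H^1(Y,T)$, since the pullback of the torsor~$f$ along itself — namely $Y\times_XY\to Y$ — admits the diagonal section. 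Hence $f^*\partial(p^*b)=0$ in $\Br(Y)$, so $\partial(p^*b)$ lies in $\Ker(f^*:\Br(X)\to\Br(Y))$, which is contained in $A=(f^*)^{-1}(\Br_\nr(Y))$ because $0\in\Br_\nr(Y)$. As moreover $\partial(p^*b)$ restricts to~$0$ on $X_{\bark}$ (the class $b$ dies in $H^1(\bark,\widehat T)=0$), we conclude $\mathscr{B}\subseteq A\cap\Br_1(X)$, as needed.

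For completeness I would recall the mechanism behind $X(\A_k)^{\mathscr{B}}\subseteq\bigcup_\sigma f^\sigma(Y^\sigma(\A_k))$, which is the only substantial ingredient. Evaluating $[f]$ at a point $x_v\in X(k_v)$ gives the class $\partial_v(x_v)\in H^1(k_v,T)$ of the fibre $f^{-1}(x_v)$, a torsor under $T_{k_v}$, and $x_v$ lifts to $Y^\sigma(k_v)$ exactly when $\partial_v(x_v)+\mathrm{res}_v[\sigma]=0$ in $H^1(k_v,T)$. A direct computation identifies, for $b\in H^1(k,\widehat T)$ and $\beta=\partial(p^*b)\in\mathscr{B}$, the invariant $\inv_v(\beta(x_v))$ with the value at $\big(\partial_v(x_v),\mathrm{res}_v(b)\big)$ of the local Tate--Nakayama pairing $H^1(k_v,T)\times H^1(k_v,\widehat T)\to\Q/\Z$. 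Thus orthogonality of $(x_v)_{v\in\Omega}$ to~$\mathscr{B}$ says precisely that the family $(\partial_v(x_v))_v$ is orthogonal to the image of $H^1(k,\widehat T)$ in $\prod_vH^1(k_v,\widehat T)$, and the Poitou--Tate exact sequence for the torus~$T$ then yields $\tau\in H^1(k,T)$ restricting to $\partial_v(x_v)$ at every place. One takes for~$\sigma$ a cocycle representing $-\tau$ (or~$\tau$, according to the twisting convention): the fibre of $f^\sigma$ over each~$x_v$ is then a trivial torsor, producing local lifts $y_v\in Y^\sigma(k_v)$ which are integral for all but finitely many~$v$, i.e.\ an adelic point of $Y^\sigma$ lying above $(x_v)_{v\in\Omega}$.

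I do not expect a serious obstacle here: the statement is a repackaging of classical descent, the one non-formal input being the identification of the extension class of~\eqref{se:ysigma} with $[f]$ — which is what gives $\mathscr{B}\subseteq\Ker(f^*)$ — together with the routine matching of sign conventions for twisting, cup products and the local duality pairings, and the standard almost-everywhere integrality of the local lifts; all of this is already contained in \cite[\textsection3]{ctsandescent2}. In a terse write-up one may therefore simply invoke the classical descent theorem and record the inclusion $\mathscr{B}\subseteq\Ker(f^*:\Br(X)\to\Br(Y))\subseteq A$.
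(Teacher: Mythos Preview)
Your proof is correct and follows exactly the paper's approach: the paper's one-sentence argument invokes the classical descent theorem (citing \cite[Theorem~4.1.1, Theorem~6.1.2]{skobook} in the proper case and \cite[Theorem~8.4, Proposition~8.12]{haskoopendescent} in general) after observing, just as you do, that the cup-products of $H^1(k,\widehat T)$ with $[f]\in H^1(X,T)$ lie in $\Ker(f^*)\subseteq A$. The only minor point is that since~$X$ is not assumed proper here, the precise reference for the inclusion $X(\A_k)^{\mathscr{B}}\subseteq\bigcup_\sigma f^\sigma(Y^\sigma(\A_k))$ is the open descent of Harari--Skorobogatov rather than \cite{ctsandescent2} or \cite[Chapter~6]{skobook}.
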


\begin{proof}
Il suffit d'appliquer
\cite[Theorem~4.1.1, Theorem~6.1.2]{skobook},
lorsque~$X$ est propre,
ou \cite[Theorem~8.4, Proposition~8.12]{haskoopendescent}, en général,
puisque les cup-produits d'un élément de $H^1(k,\widehat T)$ par $[f]\in H^1(X,T)$
appartiennent à $\Ker(f^*)$ donc à~$A$.
\end{proof}

Remplacer~$f$ par~$f^\sigma$ ne modifie pas le groupe~$A$,
d'après
la proposition~\ref{prop:descente prelim}~(iii),
et est donc loisible en vue d'établir le théorème.
La proposition~\ref{prop:descente etape 1} permet ainsi de supposer
que $(x_v)_{v\in\Omega} \in f(Y(\A_k))$.
On a alors
$(x_v)_{v\in\Omega} \in f^\sigma(Y^\sigma(\A_k))$
pour tout~$\sigma$ tel que $[\sigma] \in \Sha^1(k,T)$.
Pour tout tel~$\sigma$, considérons l'application
\begin{align*}
\epsilon_{\sigma}:\Sha^2(k,\widehat T) \cap \phi^0(\Br_\nr(Y)) \to \Q/\Z
\end{align*}
définie par $\epsilon_{\sigma}(\alpha')=\sum_{v \in \Omega} \inv_v \alpha^\sigma(y_v)$
pour
un $\alpha^\sigma \in \Br_\nr(Y^\sigma)$ tel que $\alpha'=\phi^\sigma(\alpha^\sigma)$
et un relèvement $(y_v)_{v \in \Omega} \in Y^{\sigma}(\A_k)$ de $(x_v)_{v\in\Omega}$.
L'existence de~$\alpha^\sigma$
est assurée par la proposition~\ref{prop:descente prelim}~(ii).
Que la somme $\sum_{v\in\Omega}\inv_v\alpha^\sigma(y_v)$ ne dépende pas du choix de~$(y_v)_{v\in\Omega}$
résulte de
la proposition~\ref{prop:descente prelim}~(iv),
compte tenu que $\alpha' \in \Sha^2(k,\widehat T)$.
Qu'elle ne dépende pas du choix de~$\alpha^\sigma$
vient de l'hypothèse que $(x_v)_{v \in\Omega} \in X(\A_k)^A$
et de la proposition~\ref{prop:descente prelim}~(iii).

\begin{prop}
\label{prop:variation epsilon sigma}
On a
$\epsilon_{\sigma}(\alpha')=\epsilon_0(\alpha') - \langle \alpha', [\sigma]\rangle$
pour tout $\alpha' \in \Sha^2(k,\widehat T) \cap \phi^0(\Br_\nr(Y))$
et tout $\sigma\in Z^1(k,T)$ tel que $[\sigma]\in\Sha^1(k,T)$,
où $\langle-,-\rangle: \Sha^2(k,\widehat T)\times \Sha^1(k,T) \to \Q/\Z$ désigne l'accouplement
de Poitou--Tate (voir \cite[Rappels~8.2]{sansuclinear} pour sa définition).
\end{prop}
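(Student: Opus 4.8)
La démonstration que j'envisage ramène tout le calcul à la construction déjà effectuée dans la preuve de la proposition~\ref{prop:descente prelim}~(ii). Je fixerais $\alpha' \in \Sha^2(k,\widehat T)\cap\phi^0(\Br_\nr(Y))$ et $\sigma\in Z^1(k,T)$ avec $[\sigma]\in\Sha^1(k,T)$, et je noterais~$Z$ le torseur sous~$T$ sur~$k$ déterminé par~$\sigma$. Comme $[\sigma]\in\Sha^1(k,T)$, on a $Z(k_v)\neq\emptyset$ pour toute place~$v$, et en choisissant les points~$z_v\in Z(k_v)$ entiers pour presque tout~$v$ (ce qui est loisible, $Z$ ayant un modèle lisse à bonne réduction hors d'un ensemble fini de places) on dispose d'un point adélique $(z_v)_{v\in\Omega}\in Z(\A_k)$. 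La réduction opérée avant l'énoncé de la proposition fournit par ailleurs un relèvement $(y_v)_{v\in\Omega}\in Y(\A_k)$ de $(x_v)_{v\in\Omega}$, et la proposition~\ref{prop:descente prelim}~(ii) un élément $\alpha^\sigma\in\Br_\nr(Y^\sigma)$ tel que $\phi^\sigma(\alpha^\sigma)=\alpha'$.

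L'étape-clé consiste à appliquer à cet~$\alpha^\sigma$ la construction de la preuve de la proposition~\ref{prop:descente prelim}~(ii) (cas $\sigma'=0$): avec les notations $\pi:Y\times_k Z\to Y^\sigma$, $\pr_1:Y\times_k Z\to Y$ et $\pr_2:Y\times_k Z\to Z$ de cette preuve, elle produit $\alpha\in\Br_1(Z)$ et $\alpha^0\in\Br_\nr(Y)$ tels que $\phi(\alpha)=\phi^0(\alpha^0)=\alpha'$ et $\pi^*\alpha^\sigma=\pr_1^*\alpha^0+\pr_2^*\alpha$ dans $\Br(Y\times_k Z)$ --- l'ajustement de~$\alpha^0$ par une classe de la forme $f^*\beta$ opéré \emph{loc.\ cit.}\ ne modifiant ni l'appartenance à~$\Br_\nr(Y)$ ni l'image par~$\phi^0$, puisque $\phi^0\circ f^*=0$ d'après~\eqref{se:remarques prelim}. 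Je poserais alors $y_v^\sigma=\pi(y_v,z_v)$, ce qui fournit (puisque $f^\sigma\circ\pi=f\circ\pr_1$) un relèvement $(y_v^\sigma)_{v\in\Omega}\in Y^\sigma(\A_k)$ de $(x_v)_{v\in\Omega}$, et, sachant que $\epsilon_\sigma(\alpha')$ ne dépend ni du choix de~$\alpha^\sigma$ ni de celui du relèvement, je le calculerais par fonctorialité de l'accouplement d'évaluation:
\begin{align*}
\epsilon_\sigma(\alpha')=\sum_{v\in\Omega}\inv_v\,\alpha^\sigma(y_v^\sigma)=\sum_{v\in\Omega}\inv_v\big((\pi^*\alpha^\sigma)(y_v,z_v)\big)=\sum_{v\in\Omega}\inv_v\big(\alpha^0(y_v)+\alpha(z_v)\big)=\epsilon_0(\alpha')+\sum_{v\in\Omega}\inv_v\,\alpha(z_v),
\end{align*}
la dernière égalité reposant sur $\phi^0(\alpha^0)=\alpha'$ et sur le fait que $(y_v)_{v\in\Omega}$ relève $(x_v)_{v\in\Omega}$.

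Il resterait à établir que $\sum_{v\in\Omega}\inv_v\,\alpha(z_v)=-\langle\alpha',[\sigma]\rangle$. Je vérifierais d'abord que cette somme --- finie, $\alpha$ étant une classe de~$\Br(Z)$ évaluée en le point adélique $(z_v)_{v\in\Omega}$ --- ne dépend ni du relèvement~$\alpha$ de~$\alpha'$ (deux tels relèvements diffèrent d'une classe provenant de~$\Br(k)$, sur laquelle $\sum_{v\in\Omega}\inv_v$ s'annule par la loi de réciprocité globale), ni du choix des~$z_v$ (remplacer~$z_v$ par~$t_v\cdot z_v$ avec $t_v\in T(k_v)$ modifie $\alpha(z_v)$ de $\alpha'\smile t_v\in\Br(k_v)$ d'après le lemme~\ref{lem:ancien sous-lemme}, classe nulle puisque $\alpha'\in\Sha^2(k,\widehat T)$). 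J'identifierais enfin la somme ainsi bien définie à $-\langle\alpha',[\sigma]\rangle$ au moyen de la description explicite de l'accouplement de Poitou--Tate $\Sha^2(k,\widehat T)\times\Sha^1(k,T)\to\Q/\Z$ rappelée dans~\cite[Rappels~8.2]{sansuclinear}, où $[\sigma]$ est représenté par le torseur~$Z$ et~$\alpha'$ par la classe $\alpha\in\Br_1(Z)=H^2(k,\bark[Z]^*)$ issue du lemme~\ref{lem:ancien sous-lemme}. C'est ce dernier rapprochement --- la comparaison des conventions, signe compris, définissant l'accouplement de Poitou--Tate --- que j'attends comme unique point délicat, le reste de l'argument étant soit formel, soit déjà contenu dans la preuve de la proposition~\ref{prop:descente prelim}~(ii).
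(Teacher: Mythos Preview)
Your proposal is correct and follows essentially the same route as the paper: both invoke the construction from the proof of proposition~\ref{prop:descente prelim}~(ii) to produce $\alpha^0\in\Br_\nr(Y)$ and $\alpha\in\Br_1(Z)$ with $\pi^*\alpha^\sigma=\pr_1^*\alpha^0+\pr_2^*\alpha$, then evaluate at $(y_v\times z_v)$ to obtain $\epsilon_\sigma(\alpha')=\epsilon_0(\alpha')+\sum_v\inv_v\alpha(z_v)$. For the final identification $\sum_v\inv_v\alpha(z_v)=-\langle\alpha',[\sigma]\rangle$, the paper dispatches the point you flag as délicat by citing \cite[Lemme~8.4]{sansuclinear} (voir aussi \cite[Théorème~6.2.1]{skobook}), which is precisely the statement you are after.
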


\begin{proof}
Soit $\alpha^\sigma \in \Br_\nr(Y^\sigma)$ tel que $\alpha'=\phi^\sigma(\alpha^\sigma)$.
Comme on a vu dans la preuve
de la proposition~\ref{prop:descente prelim}~(ii),
dont on reprend les notations~$Z$, $\phi$, $\pi$,
il existe $\alpha^0 \in \Br_\nr(Y)$
et $\alpha \in \Br_1(Z)$
tels que $\alpha'=\phi^0(\alpha^0)=\phi(\alpha)$
et que
$\alpha^0 \boxplus \alpha = \pi^*\alpha^{\sigma}$
dans $\Br(Y \times_k Z)$.
Fixons $(z_v)_{v\in\Omega} \in Z(\A_k)$.
Pour $(y_v)_{v\in\Omega} \in Y(\A_k)$ relevant~$(x_v)_{v\in\Omega}$, on a
\begin{align*}
\epsilon_{\sigma}(\alpha')&=
\sum_{v\in\Omega} \inv_v (\pi^*\alpha^{\sigma})(y_v\times z_v)
=\sum_{v\in\Omega} \inv_v (\alpha^0 \boxplus \alpha)(y_v\times z_v)\\
&=\sum_{v\in\Omega} \inv_v \alpha^0 (y_v) +\sum_{v\in\Omega} \inv_v \alpha(z_v)
=\epsilon_0(\alpha') - \langle \alpha',[\sigma]\rangle\rlap{,}
\end{align*}
où la dernière égalité vient de \cite[Lemme~8.4]{sansuclinear}
(voir aussi \cite[Théorème~6.2.1]{skobook}).
\end{proof}

La flèche naturelle
$\Hom(\Sha^2(k,\widehat T),\Q/\Z) \to \Hom(\Sha^2(k,\widehat T) \cap \phi^0(\Br_\nr(Y)),\Q/\Z)$
étant surjective (puisque $\Q/\Z$ est un $\Z$\nobreakdash-module injectif)
et l'accouplement de Poitou--Tate étant parfait
(voir \cite[Theorem~8.6.7]{neukirchschmidtwingberg}),
il existe
$[\sigma] \in \Sha^1(k,T)$ tel que
$\epsilon_0(\alpha')= \langle \alpha', [\sigma]\rangle$
pour tout
$\alpha' \in \Sha^2(k,\widehat T)\cap\phi^0(\Br_\nr(Y))$.
D'après la proposition~\ref{prop:variation epsilon sigma},
on a alors $\epsilon_{\sigma}=0$.
Quitte à remplacer~$f$ par~$f^\sigma$, on peut donc supposer que $\epsilon_0=0$.
La proposition suivante conclut maintenant la preuve du théorème.

\begin{prop}
\label{prop:existence yv}
Si $\epsilon_0=0$,
alors $(x_v)_{v \in\Omega} \in f\big(Y(\A_k)^{\Br_\nr(Y)}\big)$.
\end{prop}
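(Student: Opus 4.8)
The plan is to fix a lift $(y_v)_{v\in\Omega}\in Y(\A_k)$ of $(x_v)_{v\in\Omega}$ (which exists by the reductions preceding the statement) and to correct it by a suitable adelic point of~$T$ so as to make it orthogonal to $\Brnr(Y)$. I would first introduce the homomorphism $c\colon\Brnr(Y)\to\Q/\Z$, $\alpha\mapsto\sum_{v\in\Omega}\inv_v\alpha(y_v)$ (a finite sum). The first point is that $c$ factors through~$\phi^0$: for $\beta\in A$ one has $c(f^*\beta)=\sum_v\inv_v\beta(x_v)=0$ because $(x_v)_{v\in\Omega}\in X(\A_k)^A$, and, by the exactness of~\eqref{se:remarques prelim} (for $\sigma=0$) together with the very definition of~$A$, the kernel of the restriction of $\phi^0$ to $\Brnr(Y)\subset\Br_1(Y/X)$ equals $f^*A$; hence $c$ induces a homomorphism $\bar c\colon\phi^0(\Brnr(Y))\to\Q/\Z$. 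By definition of~$\epsilon_0$, the hypothesis $\epsilon_0=0$ says exactly that $\bar c$ vanishes on $\Sha^2(k,\widehat T)\cap\phi^0(\Brnr(Y))$.

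Since $\Q/\Z$ is an injective $\Z$\nobreakdash-module, the homomorphism $-\bar c$ then extends to a homomorphism $\tilde c\colon H^2(k,\widehat T)\to\Q/\Z$ that vanishes on all of $\Sha^2(k,\widehat T)$: indeed $-\bar c$ is well defined on the subgroup $\phi^0(\Brnr(Y))+\Sha^2(k,\widehat T)$ of $H^2(k,\widehat T)$ and vanishes on $\Sha^2(k,\widehat T)$ there, by the last sentence of the previous paragraph, so it remains only to extend it. Now comes the decisive step: by Poitou--Tate duality for the torus~$T$ — the degree-$2$ analogue of the duality recalled in \cite[\textsection8]{sansuclinear} and already used, in degree~$1$, in Proposition~\ref{prop:variation epsilon sigma} — the homomorphism $T(\A_k)\to\Hom(H^2(k,\widehat T),\Q/\Z)$ that sends $(t_v)_{v\in\Omega}$ to $\xi\mapsto\sum_v\inv_v(\xi\smile t_v)$ (the cup products being taken after localisation at~$v$, against $t_v\in T(k_v)$) has image exactly the subgroup of homomorphisms that vanish on $\Sha^2(k,\widehat T)$. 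Hence there exists $(t_v)_{v\in\Omega}\in T(\A_k)$ with $\sum_v\inv_v(\xi\smile t_v)=\tilde c(\xi)$ for every $\xi\in H^2(k,\widehat T)$.

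To conclude, I would put $y'_v=t_v\cdot y_v$. The family $(y'_v)_{v\in\Omega}$ lies in $Y(\A_k)$ (the $t_v$ being integral at almost all places) and still lifts $(x_v)_{v\in\Omega}$, as $T$ acts over~$X$. For $\alpha\in\Brnr(Y)\subset\Br_1(Y/X)$, Proposition~\ref{prop:descente prelim}~(iv) applied over each~$k_v$ to the torsor $Y_{k_v}\to X_{k_v}$ under $T_{k_v}$ (its proof via Lemma~\ref{lem:ancien sous-lemme} works verbatim over~$k_v$, and $\phi^0$ is compatible with the base change $k\to k_v$) gives $\alpha(y'_v)=\alpha(y_v)+(\phi^0(\alpha)\smile t_v)$ in $\Br(k_v)$; summing local invariants yields $\sum_v\inv_v\alpha(y'_v)=c(\alpha)+\tilde c(\phi^0(\alpha))=c(\alpha)-\bar c(\phi^0(\alpha))=0$. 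Thus $(y'_v)_{v\in\Omega}\in Y(\A_k)^{\Brnr(Y)}$ maps to $(x_v)_{v\in\Omega}$, as desired.

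The step I expect to be the main obstacle is the Poitou--Tate realisation in the second paragraph: one must have the global duality for~$T$ available precisely in the form that a continuous character of $H^2(k,\widehat T)$ vanishing on $\Sha^2(k,\widehat T)$ is induced by a genuine adelic point of~$T$, with the restricted-product bookkeeping checked (so that the $(t_v)$ produced is really an adèle). This can be secured either by reducing to quasi-trivial tori, where it comes down to the surjectivity of the global reciprocity map, or by invoking the Poitou--Tate exact sequence for tori directly; the remaining verifications — that $c$ factors through $\phi^0(\Brnr(Y))$ and that Proposition~\ref{prop:descente prelim}~(iv) is compatible with extension of scalars to~$k_v$ — are routine.
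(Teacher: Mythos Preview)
Your proof is correct and follows essentially the same approach as the paper: the paper likewise fixes a lift $(y_v)$, shows that $\alpha^0\mapsto\sum_v\inv_v\alpha^0(y_v)$ factors through the image of $\Br_\nr(Y)$ in $H^2(k,\widehat T)/\Sha^2(k,\widehat T)$, extends by injectivity of $\Q/\Z$, and then invokes the exact sequence $T(\A_k)\to\Hom(H^2(k,\widehat T),\Q/\Z)\to\Hom(\Sha^2(k,\widehat T),\Q/\Z)$ (citing \cite[Théorème~2]{harariapproxforte}) to produce $(t_v)$ and concludes with Proposition~\ref{prop:descente prelim}~(iv). The only cosmetic difference is a sign convention: the paper corrects by $t_v^{-1}$ rather than by $t_v$ with a $-\bar c$ extension.
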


\begin{proof}
Fixons
$(y_v)_{v \in \Omega} \in Y(\A_k)$
relevant~$(x_v)_{v\in\Omega}$.
L'application
 $\Br_\nr(Y) \to \Q/\Z$,
 $\alpha^0 \mapsto \sum_{v \in \Omega} \inv_v \alpha^0(y_v)$
s'annule sur
le noyau de la flèche
$\Br_\nr(Y) \to H^2(k,\widehat T)/\Sha^2(k,\widehat T)$
induite par~$\phi^0$,
puisque $\epsilon_0=0$.
Comme~$\Q/\Z$ est un $\Z$\nobreakdash-module injectif, elle se factorise donc par
$H^2(k,\widehat T)/\Sha^2(k,\widehat T)$.
Il s'ensuit, compte tenu de la suite exacte
\begin{align*}
\xymatrix{
T(\A_k) \ar[r]& \Hom\big(H^2(k,\widehat T),\Q/\Z\big) \ar[r]& \Hom\big(\Sha^2(k,\widehat T),\Q/\Z\big)
}
\end{align*}
établie dans \cite[Théorème~2]{harariapproxforte}, qu'il existe $(t_v)_{v\in\Omega}\in T(\A_k)$
tel que $\sum_{v\in\Omega} \inv_v \alpha^0(y_v)=\sum_{v\in\Omega} \inv_v (\phi^0(\alpha^0)\smile t_v)$
pour tout $\alpha^0\in \Br_\nr(Y)$.
On a alors $(t_v^{-1}\cdot y_v)_{v\in\Omega} \in Y(\A_k)^{\Br_\nr(Y)}$
d'après la proposition~\ref{prop:descente prelim}~(iv);
de plus, ce point adélique relève encore~$(x_v)_{v\in\Omega}$.
\end{proof}

\section{D'un torseur universel à l'autre}
\label{sec:duntorseuralautre}

La théorie de la descente, originellement développée pour des variétés propres,
a depuis été utilement appliquée, dans le cadre de l'étude des points rationnels,
à des variétés ouvertes
(voir \cite{ctskodescent}, \cite{hbsko}, \cite{ctbudapest}, \cite{cthasko}, \cite{dsw}).
Étant donnés une variété~$X$ propre, lisse et rationnellement connexe
et un ouvert $V \subset X$ tel que $\bark[V]^*=\bark{}^*$, la question se pose naturellement
de comparer la descente appliquée à~$X$ avec la descente appliquée à~$V$,
donc les torseurs universels de~$X$ avec ceux de~$V$.
C'est à cette question que ce paragraphe est consacré, dans le cadre légèrement plus général des torseurs
de type quelconque sous un groupe de type multiplicatif, sans supposer~$X$ propre.

Fixons, jusqu'à la fin du~\textsection\ref{sec:duntorseuralautre}, un corps~$k$
de caractéristique nulle, une clôture algébrique~$\bark$ de~$k$,
une variété~$X$ lisse et géométriquement irréductible sur~$k$,
un ouvert dense $V \subset X$
tel que $\bark[V]^*=\bark{}^*$,
un groupe abélien de type fini~$\Rhat$ muni d'une action
continue de $\Gal(\bark/k)$
et un homomorphisme $\Gal(\bark/k)$\nobreakdash-équivariant $\lambda:\Rhat\to \Pic(V_{\bark})$.

Définissons des $\Gal(\bark/k)$\nobreakdash-modules discrets $\widehat T$
et $\widehat Q$ et un homomorphisme $\nu:\widehat T \to \Pic(X_{\bark})$
par le diagramme commutatif à lignes exactes
\begin{align}
\begin{aligned}
\label{diag:qtr}
\xymatrix@R=2ex{
0 \ar[r] & \widehat Q \ar@{=}[d] \ar[r] & \widehat T \ar[d]^(.4){\nu} \ar[r] & \Rhat \ar[d]^(.4)\lambda \ar[r] & 0 \\
0 \ar[r] & \Div_{X_{\bark}\setminus V_{\bark}}(X_{\bark}) \ar[r] & \Pic(X_{\bark}) \ar[r] & \Pic(V_{\bark}) \ar[r] & 0\rlap{\text{,}}
}
\end{aligned}
\end{align}
où $\Div_{X_{\bark}\setminus V_{\bark}}(X_\bark)$ désigne le groupe des diviseurs sur~$X_{\bark}$
supportés par $X_{\bark}\setminus V_{\bark}$;
ainsi~$\widehat T$ est par définition le produit fibré de~$\Rhat$
par~$\Pic(X_{\bark})$ au-dessus de~$\Pic(V_\bark)$.
Comme
le groupe $\widehat Q=\Div_{X_{\bark}\setminus V_{\bark}}(X_\bark)$ admet une base sur~$\Z$ permutée par~$\Gal(\bark/k)$,
il donne naissance à
 un tore quasi-trivial
$Q=\Homrond(\widehat Q,\Gm)$ sur~$k$.
Posant
 $R=\Homrond(\Rhat,\Gm)$
et
$T=\Homrond(\widehat T,\Gm)$,
on a une suite exacte
de groupes de type multiplicatif sur~$k$:
\begin{align}
\label{eq:rtq}
\xymatrix{
1 \ar[r]&R \ar[r]& T \ar[r]& Q \ar[r]& 1\rlap{.}
}
\end{align}

Rappelons que le \emph{type} d'un torseur $g:Z \to V$ sous~$R$
est la classe d'isomorphisme du torseur $g_{\bark}:Z_{\bark}\to V_{\bark}$ sous~$R_{\bark}$.
C'est un élément $\Gal(\bark/k)$\nobreakdash-invariant de $H^1(V_{\bark},R_{\bark})$.
De façon équivalente
et conformément à la définition de~\cite[\textsection2.0]{ctsandescent2},
l'isomorphisme canonique
\begin{align}
\label{eq:identification type}
H^1(V_{\bark},R_{\bark})=H^1(V_\bark,\Homrond(\Rhat,\Gm))=\Hom(\Rhat,H^1(V_\bark,\Gmbark))=\Hom(\Rhat,\Pic(V_{\bark}))
\end{align}
(où l'isomorphisme du milieu résulte, dans le cas où $\Rhat=\Z/n\Z$,
de l'hypothèse $\bark[V]^*=\bark{}^*$)
applique le type de~$g$ sur l'homomorphisme $\Rhat\to \Pic(V_\bark)$
qui à $\chi \in \Rhat = \Hom(R_{\bark},\Gmbark)$
associe
l'image de la classe de~$g_\bark$ par $\chi_*:H^1(V_\bark,R_\bark)\to H^1(V_\bark,\Gmbark)=\Pic(V_\bark)$.

La partie~(ii) de la proposition suivante constitue l'énoncé principal du~\textsection\ref{sec:duntorseuralautre}.

\begin{prop}
\label{prop:comparaisontorseurs}
Soit $f:Y \to X$ un torseur sous~$T$, de type~$\nu$.
Soit $W=f^{-1}(V)$.
\begin{enumerate}
\item[(i)] Le torseur $W/R \to V$ sous~$Q$ est trivial.  Autrement dit, il existe 
 un $V$\nobreakdash-isomorphisme $Q$\nobreakdash-équivariant $W/R \simeq V \times_k Q$.
\end{enumerate}
Fixons un tel isomorphisme et
notons $\pi: W \to Q$
le morphisme $T$\nobreakdash-équivariant obtenu en le composant avec le morphisme quotient $W \to W/R$
et la seconde projection $V \times_k Q \to Q$.
\begin{enumerate}
\item[(ii)]
Pour  $q \in Q(k)$,
l'action naturelle de~$R$ sur $W_q=\pi^{-1}(q)$
fait du morphisme $W_q \to V$ induit par~$f$ un torseur, sous~$R$, de type~$\lambda$.
\item[(iii)]
Les classes d'isomorphisme de torseurs $V' \to V$ sous~$R$, de type~$\lambda$, tels que le produit contracté
$V' \times^R_k T \to V$ soit isomorphe, en tant que torseur sous~$T$, à $W \to V$,
sont exactement les classes d'isomorphisme des torseurs $W_q \to V$ pour $q \in Q(k)$.
\end{enumerate}
\end{prop}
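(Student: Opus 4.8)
The plan is to reduce the three assertions to two genuine inputs, namely Hilbert~90 for the quasi-trivial torus~$Q$ and the fact that a $k$-morphism from~$V$ to a torus is constant when $\bark[V]^*=\bark{}^*$; everything else is formal manipulation of contracted products together with the type formalism of~\cite{ctsandescent2}.

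\emph{Assertion~(i).} First I would compute the type of the $Q$-torseur $W/R\to V$. Restricting~$f$ over~$V$, the torseur $W\to V$ under~$T$ has for type the composite of~$\nu$ with $\Pic(X_\bark)\to\Pic(V_\bark)$, since~$f$ has type~$\nu$; by the commutativity of the right-hand square of~\eqref{diag:qtr}, this composite equals $\lambda\circ(\widehat T\to\Rhat)$. Pushing a $T$-torseur forward along the surjection $T\to Q$ corresponds, on types, to precomposition with the inclusion $\widehat Q\hookrightarrow\widehat T$; as $\widehat Q=\Ker(\widehat T\to\Rhat)$, the type of $W/R\to V$ is trivial, so $W/R\to V$ is geometrically trivial by the identification~\eqref{eq:identification type} for~$Q$. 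It remains to descend this triviality to~$k$. Since~$Q$ is quasi-trivial one has $H^1(k,Q)=0$, and since $\bark[V]^*=\bark{}^*$ one has $H^0(V_\bark,Q_\bark)=Q(\bark)$; the Hochschild--Serre spectral sequence for $V_\bark\to V\to\Spec(k)$ then shows that $H^1(V,Q)\to H^1(V_\bark,Q_\bark)$ is injective. Hence the class of $W/R\to V$ in $H^1(V,Q)$ vanishes, and any trivialisation provides the desired $Q$-equivariant $V$-isomorphism $W/R\simeq V\times_k Q$.

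\emph{Assertion~(ii).} Fix such an isomorphism and let $\pi\colon W\to Q$ be as in the statement; it is $T$-equivariant, $T$ acting on~$Q$ through $T\to Q$, and the pair $(f,\pi)\colon W\to V\times_k Q$ is precisely the quotient $W\to W/R$ followed by the chosen isomorphism, hence is an $R$-torseur. For $q\in Q(k)$ the closed subvariety $W_q=\pi^{-1}(q)$ is stable under~$R$, which acts trivially on~$Q$ because $R=\Ker(T\to Q)$, and $W_q\to V$ is the pullback of the $R$-torseur $(f,\pi)$ along the section $(\mathrm{id}_V,q)\colon V\to V\times_k Q$; it is therefore an $R$-torseur for this action. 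To identify its type I would exhibit the morphism $W_q\times^R_k T\to W$ induced by $(w,t)\mapsto t\cdot w$: it is a morphism of $T$-torseurs over~$V$, hence an isomorphism. Reading types through it, and using that the type of a contracted product $Z\times^R_k T$ is the type of~$Z$ composed with the surjection $\widehat T\to\Rhat$, I find that $\lambda\circ(\widehat T\to\Rhat)$ (the type of~$W$) equals the type of~$W_q$ composed with $\widehat T\to\Rhat$; the surjectivity of $\widehat T\to\Rhat$ then forces the type of $W_q\to V$ to be~$\lambda$.

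\emph{Assertion~(iii).} By~(ii) and the isomorphism $W_q\times^R_k T\simeq W$ just produced, each torseur $W_q\to V$ with $q\in Q(k)$ has type~$\lambda$ and satisfies $W_q\times^R_k T\simeq W$; this yields one inclusion. Conversely, let $V'\to V$ be an $R$-torseur of type~$\lambda$ together with an isomorphism $\theta\colon V'\times^R_k T\xrightarrow{\sim}W$ of $T$-torseurs over~$V$. Composing the canonical $R$-equivariant map $V'\to V'\times^R_k T$, $v'\mapsto[v',1]$, with~$\theta$ yields an $R$-equivariant $V$-morphism $j\colon V'\to W$. Then $\pi\circ j\colon V'\to Q$ is $R$-invariant, since~$R$ acts on~$W$ through $R\subset T$ and trivially on~$Q$, so it factors as $V'\to V\xrightarrow{g}Q$. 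As~$V$ is geometrically integral with $\bark[V]^*=\bark{}^*$ and~$Q$ is a torus, $g$ is constant, with value some $q\in Q(k)$; hence~$j$ factors through $W_q=\pi^{-1}(q)$, giving an $R$-equivariant $V$-morphism $V'\to W_q$, which is automatically an isomorphism of $R$-torseurs. Therefore $[V']=[W_q]$, which is the remaining inclusion.

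The only steps that are more than bookkeeping are the passage from geometric to rational triviality in~(i), where the quasi-triviality of~$Q$ enters through $H^1(k,Q)=0$, and the constancy of $g\colon V\to Q$ in~(iii), where the hypothesis $\bark[V]^*=\bark{}^*$ reappears; beyond these I anticipate no obstacle.
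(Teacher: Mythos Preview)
Your proof is correct and follows essentially the same approach as the paper's. For~(i) and~(ii) the arguments are virtually identical: the paper packages the type computation and the injectivity of $H^1(V,Q)\to H^1(V_\bark,Q_\bark)$ into a single commutative diagram and the key identity $\rho(\nu)=\iota(\lambda)$, but the content is the same as what you wrote. For~(iii) there is a small stylistic difference: the paper invokes the exact sequence $Q(k)\to H^1(V,R)\to H^1(V,T)$ (whose exactness encodes the fact $H^0(V,Q)=Q(k)$, i.e.\ your constancy of~$g$) and then identifies the boundary via $W_1\times^R_k T_q\simeq W_q$, whereas you construct the isomorphism $V'\simeq W_q$ directly from the map $j\colon V'\to W$. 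Both routes rest on exactly the same two inputs you highlighted, and neither is materially simpler than the other.
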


\begin{proof}
Considérons le diagramme commutatif
\begin{align}
\begin{aligned}
\label{diag:comparaison torseurs}
\xymatrix@R=2ex{
& H^1(X,T) \ar[dr] \ar[d] \\
& H^1(X_\bark,T_\bark) \ar[dr]^\rho & H^1(V,T) \ar[r] \ar[d] & H^1(V,Q)
\ar@{}|{\rotatebox{-90}{$\hookrightarrow$}}[d] \\
0 \ar[r] & H^1(V_\bark,R_\bark) \ar[r]^\iota & H^1(V_\bark,T_\bark) \ar[r] & H^1(V_\bark,Q_\bark)\rlap{,}
}
\end{aligned}
\end{align}
dont la ligne du bas est exacte
puisque $\bark[V]^*=\bark{}^*$.
La flèche verticale de droite est injective car son noyau est $H^1(k,Q)$, qui est nul
d'après le théorème de Hilbert~90 puisque~$Q$ est un tore quasi-trivial.
Voyant~$\lambda$ et~$\nu$ comme des éléments de $H^1(V_\bark,R_\bark)$
et de $H^1(X_\bark,T_\bark)$, on constate, en comparant la définition de~$\nu$
(voir le diagramme~\eqref{diag:qtr})
avec les trois isomorphismes canoniques
$H^1(X_{\bark},T_{\bark})=\Hom(\widehat T,\Pic(X_{\bark}))$,
$H^1(V_{\bark},T_{\bark})=\Hom(\widehat T,\Pic(V_{\bark}))$,
$H^1(V_{\bark},R_{\bark})=\Hom(\Rhat,\Pic(V_{\bark}))$
(voir~\eqref{eq:identification type}),
que
\begin{align}
\label{eq:rho nu iota lambda}
\rho(\nu)=\iota(\lambda)\rlap{.}
\end{align}
Comme $[f] \in H^1(X,T)$ s'envoie sur $\nu \in H^1(X_\bark,T_\bark)$,
on déduit
de~\eqref{diag:comparaison torseurs}
et~\eqref{eq:rho nu iota lambda} que l'image de~$[f]$ dans $H^1(V,Q)$ s'annule.
Cette annulation équivaut à l'assertion~(i).

Fixons un $V$\nobreakdash-isomorphisme $Q$\nobreakdash-équivariant $W/R \simeq V \times_k Q$.
Le morphisme $W_q \to V$ induit par~$f$
pour $q \in Q(k)$
est un torseur sous~$R$ puisque c'est la restriction, au-dessus de $V\times \{q\}$, du
torseur $W \to W/R = V\times_k Q$ sous~$R$.
Notant $W_q \times_k^R T$ le produit contracté de~$W_q$ et~$T$ sous l'action de~$R$,
l'inclusion $R$\nobreakdash-équivariante de~$W_q$ dans~$W$ induit un morphisme
$W_q \times_k^R T \to W$ de torseurs sous~$T$.
Comme tout morphisme de torseurs sous~$T$,
c'est un isomorphisme.
L'application~$\iota$ envoie donc le type de~$W_q$ sur le type de~$W$,
c'est-à-dire sur~$\rho(\nu)$.  Ainsi, vu~\eqref{eq:rho nu iota lambda}
et vu l'injectivité de~$\iota$
(voir~\eqref{diag:comparaison torseurs}), le type de~$W_q$ est~$\lambda$;
d'où~(ii).

Pour vérifier~(iii), remarquons que $W_1\to V$ (où $1 \in Q(k)$ désigne le neutre)
est l'un des torseurs remplissant les conditions de~(iii).
Au vu de la suite exacte
\begin{align}
\xymatrix{
Q(k) \ar[r] & H^1(V,R) \ar[r] & H^1(V,T)\rlap{,}
}
\end{align}
qui résulte de~\eqref{eq:rtq} et de l'hypothèse que $\bark[V]^*=\bark{}^*$,
il s'ensuit que tout $V' \to V$ remplissant les conditions de~(iii)
est isomorphe à $W_1 \times_k^R T_q \to V$ pour un $q \in Q(k)$,
où $T_q$ désigne la fibre de $T\to Q$ en~$q$.
Or l'action de~$T$ sur~$W$ induit un isomorphisme $W_1 \times_k^R T_q \to W_q$.
\end{proof}

\begin{cor}
\label{cor:torseurs universels}
Supposons $\Pic(X_\bark)$ de type fini.
Tout torseur universel de~$X$ contient un ouvert dense admettant un morphisme
lisse, vers un tore quasi-trivial, dont les fibres sont des torseurs universels
de~$V$.
\end{cor}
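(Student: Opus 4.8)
The plan is to deduce this directly from Proposition~\ref{prop:comparaisontorseurs}, by specializing the data $(\Rhat,\lambda)$ appropriately so that the torseurs sous~$R$ of type~$\lambda$ over~$V$ become the universal torsors of~$V$, while the torseur sous~$T$ of type~$\nu$ over~$X$ becomes a universal torsor of~$X$.

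First I would fix a universal torsor $f_0:Y_0 \to X$; by definition its type is an isomorphism $\widehat{T_0} := \Pic(X_\bark)^{\vee\vee}$, or more precisely the identity $\mathrm{id}:\Pic(X_\bark)\to\Pic(X_\bark)$ under the identification~\eqref{eq:identification type} (this uses that $\Pic(X_\bark)$ is of finite type, which also guarantees universal torsors exist; note $\bark[X]^*=\bark^*$ since $X$ is proper and geometrically irreducible — wait, $X$ is only assumed proper in the statement, so this holds). Next, choose a dense open $V \subset X$ with $\bark[V]^*=\bark^*$: since $X$ is proper this is automatic for $V=X$ itself, but to get a genuinely smaller $V$ one removes a suitable divisor; any $V$ obtained by removing finitely many prime divisors works as long as the resulting unit group stays $\bark^*$, which is the case when $\Pic(V_\bark)$ is still of finite type (true since $\Pic(X_\bark)$ is). Then set $\Rhat = \Pic(V_\bark)$ and let $\lambda:\Rhat\to\Pic(V_\bark)$ be the identity; with these choices the torsors of type~$\lambda$ over~$V$ are exactly the universal torsors of~$V$. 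The diagram~\eqref{diag:qtr} now reads: $\widehat Q = \Div_{X_\bark\setminus V_\bark}(X_\bark)$, the map $\nu:\widehat T\to\Pic(X_\bark)$ is the identity (so $\widehat T=\Pic(X_\bark)$ and $T=T_0$), and $Q$ is the quasi-trivial torus attached to the permutation module $\widehat Q$.

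With this setup, $f_0:Y_0\to X$ is a torseur sous~$T$ of type~$\nu$, so Proposition~\ref{prop:comparaisontorseurs} applies verbatim. Part~(i) gives a $V$-isomorphism $W/R \simeq V\times_k Q$ where $W = f_0^{-1}(V)$, hence a $T$-equivariant morphism $\pi:W\to Q$ to the quasi-trivial torus~$Q$. Part~(ii) says that for each $q\in Q(k)$ the fiber $W_q = \pi^{-1}(q)$, with its induced $R$-action, is a torsor over~$V$ of type~$\lambda=\mathrm{id}$, i.e.\ a universal torsor of~$V$. Thus $W\subset Y_0$ is a dense open, the morphism $\pi:W\to Q$ is the desired map to a quasi-trivial torus, and I would finally check $\pi$ is smooth: its fibers $W_q$ are smooth (being torsors under the smooth group~$R$ over the smooth variety~$V$) and $\pi$ is flat and of finite presentation because it is $T$-equivariant with $T\to Q$ faithfully flat — concretely, pulling back along $T\to Q$ trivializes~$\pi$, since $W\times_Q T \simeq W_1\times_k T$ as in the proof of~(iii), and smoothness descends along the fppf cover $T\to Q$. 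This completes the argument.

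The only real subtlety — and it is minor — is ensuring the open $V$ can be chosen with $\bark[V]^*=\bark^*$ while being a proper subset of~$X$ (needed only if one wants the statement to have content beyond $V=X$); but in fact taking $V=X$ is already allowed and makes the corollary say that a universal torsor of~$X$ itself contains a dense open fibered over a quasi-trivial torus in universal torsors of (an open of)~$X$, and removing divisors to shrink $V$ preserves $\bark[V]^*=\bark^*$ as long as one removes only divisors whose classes one is willing to kill — no obstruction arises since we may remove any finite set of prime divisors and $\Pic$ remains finitely generated. Everything else is a direct translation of Proposition~\ref{prop:comparaisontorseurs} into the language of universal torsors.
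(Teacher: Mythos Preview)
Your approach is exactly the paper's one-line proof: apply Proposition~\ref{prop:comparaisontorseurs}(ii) with~$\lambda$ and~$\nu$ both taken to be the identity. Two minor slips to clean up: in the standing hypotheses of \S\ref{sec:duntorseuralautre} the variety~$X$ is \emph{not} assumed proper (only smooth and geometrically irreducible; one has $\bark[X]^* \subset \bark[V]^* = \bark^*$ directly), and the open~$V$ is fixed once and for all at the start of \S\ref{sec:duntorseuralautre} rather than something to be chosen, so your closing paragraph is beside the point. As for smoothness of~$\pi$, it is simpler to note that $\pi$ factors as $W \to W/R = V\times_k Q \to Q$, a torsor under the smooth group~$R$ followed by the projection from a smooth $k$-variety.
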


\begin{proof}
Appliquer la proposition~\ref{prop:comparaisontorseurs}~(ii)
en prenant pour~$\lambda$ et~$\nu$ l'identité.
\end{proof}

Dans certains cas intéressants, le morphisme $\pi:W\to Q$ de la proposition~\ref{prop:comparaisontorseurs}
admet sur~$\bark$ une section ou au moins une section rationnelle.
La proposition suivante interviendra dans la preuve du théorème~\ref{th points rat hr}.

\begin{prop}
\label{prop:sectionbark}
Reprenons les notations de la proposition~\ref{prop:comparaisontorseurs}
et supposons $k=\bark$.
\begin{enumerate}
\item[(i)] Si $\Rhat$ est sans torsion, alors~$\pi$ admet une section.
\item[(ii)]
Notons $V' \to V$ un torseur de type~$\lambda$.
Si le sous-groupe de torsion de~$\Rhat$ est cyclique et si les compactifications lisses
de~$V'$
sont rationnellement connexes,
alors~$\pi$ admet une section rationnelle.
\end{enumerate}
\end{prop}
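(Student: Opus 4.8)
The plan is to prove~(i) by a direct argument and~(ii) by reducing to a one\nobreakdash-dimensional base and then invoking the theorem of Graber, Harris and Starr.

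\emph{Part~(i).} Since $k=\bark$ and $\Rhat$ is torsion-free, $R$ is a split torus and $Q\simeq\Gm^N$ for some~$N$. I would fix $v_0\in V(\bark)$ and a point $w_0$ of $f^{-1}(v_0)$; this fibre is a torseur under~$T$ over the algebraically closed field~$\bark$, hence trivial, and $t\mapsto t\cdot w_0$ identifies it with~$T$. As $\pi$ is $T$\nobreakdash-équivariant for the action of~$T$ on~$Q$ through $T\to Q$, and as $T(\bark)\to Q(\bark)$ is surjective, one may, after replacing~$w_0$ by a suitable translate, assume $\pi(w_0)=1$; the identification above then turns $\pi|_{f^{-1}(v_0)}$ into the canonical projection $T\to Q$. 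This projection is a torseur under~$R$ over $Q\simeq\Gm^N$, whose Picard group vanishes, so it is a trivial torseur and admits a section $\sigma:Q\to T$; then $q\mapsto\sigma(q)\cdot w_0$ is a section of~$\pi$. (Nothing here depends on the chosen trivialization of $W/R$, since a change of trivialization only composes $\pi$ with a translation on the target along a morphism $V\to Q$, which the normalization $\pi(w_0)=1$ absorbs.)

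\emph{Part~(ii), reduction to a finite~$R$.} Write $\Rhat\simeq\Z^r\oplus\Z/n$, so $R\simeq\Gm^r\times\mmu_n$ with $R^\circ\simeq\Gm^r$. The quotient $W\to W/R^\circ$ is a torseur under~$\Gm^r$, hence admits a rational section, and $\pi$ factors through it; so it suffices to find a rational section of the induced morphism $W/R^\circ\to Q$. This is a torseur under $R/R^\circ\simeq\mmu_n$ over $V\times_kQ$ followed by the projection to~$Q$; by Proposition~\ref{prop:comparaisontorseurs}~(ii) its fibres over $Q(\bark)$ are torseurs under~$\mmu_n$ of one fixed type, hence isomorphic over~$\bark$ to $V'/R^\circ$, whose smooth compactifications remain rationally connected because $V'/R^\circ$ is $\bark$\nobreakdash-birational to $V'\times_k\A^r$. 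Thus I may assume $R=\mmu_n$ and $\Rhat=\Z/n$; the point is that the torsion subgroup of the new~$\Rhat$ is still cyclic.

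\emph{Part~(ii), reduction to $Q=\Gm$ and conclusion.} Using $\bark[V]^*=\bark{}^*$, Kummer theory together with $\Pic(V\times_kQ)=\Pic(V_\bark)$ yields $H^1(V\times_kQ,\mmu_n)=\pr_1^*H^1(V,\mmu_n)\oplus\pr_2^*H^1(Q,\mmu_n)$ with $H^1(Q,\mmu_n)=\bark[Q]^*/n$. Since the restriction of $[W]$ to each fibre $V\times\{q\}$ is the type~$\lambda$ of~$V'$, one gets $[W]=\pr_1^*[V']+\pr_2^*b$ and hence $W\simeq(V'\times_kC_b)/\mmu_n$, where $C_b\to Q$ is the $\mmu_n$\nobreakdash-torseur of class~$b$. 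A representative of~$b$ in $\bark[Q]^*=\bark{}^*\times\Z^N$ may be taken to be a monomial in coordinates of $Q\simeq\Gm^N$ (absorbing an $n$\nobreakdash-th power), and here cyclicity of~$\Z/n$ lets an automorphism of~$Q$ concentrate its exponents on a single coordinate~$x$; after this change of coordinates—harmless for the existence of a rational section of~$\pi$—the morphism $\pi$ becomes $\pi_0\times\mathrm{id}_{\Gm^{N-1}}$ with $\pi_0:W_0\to\Gm$, $W_0=(V'\times_k\{z^n=x^d\})/\mmu_n$. The generic fibre of~$\pi_0$ over $\Spec\bark(x)$ is the twist of $V'_{\bark(x)}$ by $x^d\in\bark(x)^*/(\bark(x)^*)^n$, a class that dies over $\overline{\bark(x)}$, so a smooth compactification of it is geometrically rationally connected. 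As $\bark(x)$ is the function field of a curve over an algebraically closed field of characteristic zero, the theorem of Graber, Harris and Starr gives a rational point on that compactification, and (by deforming the corresponding section) its rational points are Zariski\nobreakdash-dense, so one of them lies in the dense open subset formed by the generic fibre of~$\pi_0$ itself; it defines a rational section of~$\pi_0$, hence of~$\pi$. I expect the reduction to a one\nobreakdash-dimensional base to be the real content: cyclicity of the torsion is exactly what lets the twisting data be pushed onto a single $\Gm$\nobreakdash-factor of~$Q$, whereas for non\nobreakdash-cyclic torsion one would be forced to work over the function field of a variety of dimension~$\geq 2$, where Graber--Harris--Starr no longer applies.
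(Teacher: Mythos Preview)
Your argument is correct. For Part~(i) it is essentially the paper's: the paper observes that the sequence $0\to\widehat Q\to\widehat T\to\Rhat\to 0$ splits since~$\Rhat$ is free, hence $T\simeq R\times_k Q$ and $W\simeq W_1\times_k Q$, while you obtain the same trivialization of $T\to Q$ as an $R$\nobreakdash-torsor via $\Pic(\Gm^N)=0$; these are two phrasings of the same fact.

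For Part~(ii) the routes genuinely differ. The paper proceeds in one step: it chooses liftings in~$\widehat T$ of generators of $\Rhat\simeq\Z^n\oplus\Z/m\Z$, producing a surjection $T\to\Gm^n\times_k\Gm$ compatible with a character $Q\to\Gm$, and then forms the contracted product $\pi_0:W\times_k^T(\Gm^n\times_k\Gm)\to\Gm$; a cartesian diagram exhibits~$\pi$ as the pullback of~$\pi_0$ along $Q\to\Gm$, so a rational section of~$\pi_0$ (given by Graber--Harris--Starr together with Koll\'ar's deformation of sections, since the geometric generic fibre is birational to~$V'$) yields one of~$\pi$. Your two-step reduction---quotient by~$R^\circ$ to reach $R=\mmu_n$, then use Kummer theory and a $\mathrm{GL}_N(\Z)$ change of coordinates on~$Q$ to concentrate the class~$b$ on a single $\Gm$\nobreakdash-factor---arrives at the same one-dimensional base by a more explicit computation. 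The paper's contracted-product construction is slicker and handles the free and torsion parts of~$\Rhat$ uniformly; your approach has the merit of making transparent where cyclicity enters, namely as the condition allowing a single vector in $(\Z/n\Z)^N$ to be rotated onto one coordinate axis.
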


Dans~(ii), le torseur $V' \to V$ est unique à isomorphisme près puisque $k=\bark$.

\begin{proof}
Si~$\Rhat$ est sans torsion,
la première ligne du diagramme~\eqref{diag:qtr} est scindée,
donc $T \simeq R \times_k Q$,
donc $W = W_1 \times_k^R T \simeq W_1 \times_k Q$,
compte tenu de la proposition~\ref{prop:comparaisontorseurs}~(iii).
(Comme dans la preuve de la proposition~\ref{prop:comparaisontorseurs},
nous notons ici $W_1=\pi^{-1}(1)$, où $1 \in Q(k)$ désigne le neutre.)
Le morphisme~$\pi$ s'identifie alors à la seconde projection $W_1\times_k Q \to Q$.
Le choix d'un $k$\nobreakdash-point de~$W_1$ détermine une section de~$\pi$.
D'où~(i).

Prouvons~(ii).
Supposons le sous-groupe de torsion de~$\Rhat$ cyclique
et choisissons un isomorphisme $\Rhat \simeq \Z^n \oplus \Z/m\Z$ pour un $n\geq 0$ et un $m\geq 1$
ainsi que des relèvements dans~$\widehat T$
des~$n$ générateurs canoniques de~$\Z^n \subset \Rhat$
et du générateur $0\oplus 1$ du sous-groupe
de torsion de~$\Rhat$.
Ces choix déterminent un diagramme commutatif à lignes exactes
\begin{align}
\begin{aligned}
\xymatrix@R=2ex{
0 \ar[r] & \Z \ar[d] \ar[r] & \Z^n \oplus \Z \ar[d] \ar[r] & \Z^n \oplus \Z/m\Z \ar[r] \ar@<-.065em>@{=}[d] & 0 \\
0 \ar[r] & \widehat Q \ar[r] & \widehat T \ar[r] & \Rhat \ar[r] & 0\rlap{,}
}
\end{aligned}
\end{align}
d'où, dualement, un diagramme commutatif à lignes exactes
\begin{align}
\label{diag:rtq cyclique}
\begin{aligned}
\xymatrix@R=2ex{
1 \ar[r] &  R \ar@{=}[d] \ar[r] & T \ar[d] \ar[r] &  Q \ar[r] \ar[d] & 1 \\
1 \ar[r] & \Gm^n \times_k \mmu_m \ar[r] & \Gm^n \times_k \Gm \ar[r] & \Gm \ar[r] & 1\rlap{.}
}
\end{aligned}
\end{align}
Appliquant le foncteur $W\times^T_k-$
au carré de droite de~\eqref{diag:rtq cyclique}, on obtient le carré de gauche du diagramme commutatif
\vspace*{-2ex}
\begin{align}
\label{diag:pi pi0}
\begin{aligned}
\xymatrix@R=2ex{
\ar@<.3em>@{..>}@/^1.2pc/[rr]^\pi
W \ar[d] \ar[r] & V \times_k Q \ar[d] \ar[r] & Q \ar[d] \\
W \times_k^T (\Gm^n \times_k \Gm) \ar[r] & V \times_k \Gm \ar[r] & \Gm\rlap{,}
}
\end{aligned}
\end{align}
dont les deux carrés sont cartésiens, dont la flèche verticale de droite est la flèche
verticale de droite de~\eqref{diag:rtq cyclique} et dont les flèches horizontales de droite sont les projections.
Supposons les compactifications lisses de~$V'$ rationnellement connexes.
D'après la proposition~\ref{prop:comparaisontorseurs}~(ii),
la fibre générique géométrique de~$\pi$ se déduit de~$V'$ par extension des scalaires.
Par ailleurs, elle se déduit par extension des scalaires
de la fibre générique géométrique
du morphisme
$\pi_0:W \times_k^T (\Gm^n\times_k \Gm) \to \Gm$
issu de~\eqref{diag:pi pi0}.
Les compactifications lisses de cette dernière sont donc rationnellement connexes.
Comme~$\Gm$ est une courbe sur un corps algébriquement clos de caractéristique nulle,
il s'ensuit, grâce au théorème de Graber--Harris--Starr \cite[Theorem~1.1]{ghs}
combiné à \cite[Chapter~IV, Theorem~6.10]{kollarbook},
que~$\pi_0$ admet une section rationnelle.
Par conséquent~$\pi$ admet une section rationnelle.
\end{proof}

\begin{rmk}
Lorsque~$V'$ est un espace homogène d'un groupe algébrique linéaire,
comme ce sera le cas dans toutes les applications de la proposition~\ref{prop:sectionbark}
contenues dans cet article,
le cas particulier du théorème de Graber--Harris--Starr utilisé ci-dessus
est connu depuis Springer (voir \cite[Chapitre~III,
\textsection2.3, Théorème~1$'$ et \textsection2.4, Corollaire~1]{serrecg}).
\end{rmk}

\section{Fibrations au-dessus de tores quasi-triviaux}
\label{sec:fibrations tore qtriv}

Nous rassemblons dans ce paragraphe des énoncés (certains connus, d'autres nouveaux)
concernant les points rationnels ou les zéro-cycles
applicables à
l'espace total de fibrations propres et lisses en variétés rationnellement connexes
au-dessus d'un tore quasi-trivial.
Il s'agit là de fibrations au-dessus de~$\P^n_k$ dont
le lieu des fibres singulières est géométriquement
une réunion de $n+1$ hyperplans.  Comme on le sait depuis \cite{skorodescent},
on peut relâcher l'hypothèse de lissité et la remplacer
par la condition que les fibres au-dessus des points
de codimension~$1$ du tore quasi-trivial sont scindées.
Rappelons qu'une variété est dite \emph{scindée} si l'une au moins
de ses composantes irréductibles
est géométriquement irréductible et de multiplicité~$1$
(notion introduite dans \emph{op.\ cit.}, Definition~0.1).

Fixons, dans tout le~\textsection\ref{sec:fibrations tore qtriv},
un corps de nombres~$k$,
une
$k$\nobreakdash-algèbre étale~$E$ et une base de~$E$ comme espace vectoriel sur~$k$.
Posons $Q=R_{E/k}\Gm$, $Q^\aff=R_{E/k}\A^1_E$ et $n=[E:k]$.
Le choix de la base fournit un isomorphisme $Q^\aff=\A^n_k$, d'où une suite d'inclusions
$Q \subset Q^\aff \subset \P^n_k$.
Fixons enfin une variété~$X$ irréductible, propre et lisse sur~$k$
et un morphisme dominant $f:X \to \P^n_k$ dont la fibre générique est rationnellement connexe.

\subsection{Fibres presque toutes scindées}

La situation la plus favorable est celle où les fibres de~$f$ au-dessus des points
de codimension~$1$ de~$Q^\aff$ sont toutes scindées.
Le théorème suivant est établi dans \cite[Theorem~1]{skofibration} et \cite[Theorem~2.1]{skorodescent}, pour~(i),
et dans \cite[Théorème~3.2.1 et la remarque à la fin du \textsection3.2]{hararifleches},
pour~(ii).
Les hypothèses de \emph{loc.\ cit.}\ sont ici satisfaites en vertu de \cite[Theorem~1.1]{ghs}.
Pour $q\in Q$, notons $X_q=f^{-1}(q)$.

\begin{thm}[Skorobogatov (i), Harari (ii)]
\label{th:fibration skohar}
Supposons les fibres de~$f$ au-dessus des points de codimension~$1$ de~$Q^\aff$ scindées.
\begin{enumerate}
\item[(i)]
Si~$X_q(k)$ est dense dans $X_q(\A_k)$ pour tout point rationnel~$q$ d'un sous-ensemble
hilbertien de~$Q$, alors~$X(k)$ est dense dans $X(\A_k)$.
\item[(ii)]
Si $X_q(k)$ est dense dans $X_q(\A_k)^{\Br(X_q)}$
pour tout point rationnel~$q$ d'un sous-ensemble hilbertien de~$Q$,
alors~$X(k)$ est dense dans $X(\A_k)^{\Br(X)}$.
\end{enumerate}
\end{thm}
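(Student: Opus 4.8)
Le plan est de déduire les deux assertions~(i) et~(ii) des théorèmes de fibration de Skorobogatov et de Harari rappelés ci-dessus, le travail consistant uniquement à vérifier que les hypothèses de \emph{loc.\ cit.}\ sont satisfaites ici. Pour~(i) on appliquera \cite[Theorem~1]{skofibration} et \cite[Theorem~2.1]{skorodescent}, et pour~(ii) \cite[Théorème~3.2.1 et la remarque à la fin du \textsection3.2]{hararifleches}; ces énoncés portent sur un morphisme dominant, d'une variété propre et lisse vers~$\P^n_k$, dont la fibre générique est rationnellement connexe et dont les fibres au-dessus des points de codimension~$1$ de l'ouvert affine~$Q^\aff$ sont scindées — ce qui est précisément notre hypothèse. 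Qu'aucune condition ne soit imposée à la fibre au-dessus de l'hyperplan à l'infini est sans conséquence, car la méthode des fibrations n'a besoin de voir que la partie de~$X$ située au-dessus de~$Q^\aff$: étant donné un point adélique de~$X$ à approcher, on le déplace d'abord, aux places en nombre fini concernées, en un point adélique voisin dont l'image appartient à~$Q^\aff(\A_k)$ (ce qui est possible puisque~$f$ est dominant et~$X$ lisse), de sorte que l'hyperplan à l'infini n'intervient jamais dans l'argument.

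La seule hypothèse de \emph{loc.\ cit.}\ qui ne soit pas purement formelle est que la fibre générique géométrique de~$f$ acquière un point rationnel après restriction de la base à une courbe convenable, typiquement une droite générale de~$\P^n_\bark$; c'est ainsi que ces théorèmes se ramènent à une base de dimension~$1$ avant de mettre en œuvre la descente, respectivement les flèches de spécialisation de Harari. On la vérifiera à l'aide du théorème de Graber, Harris et Starr: la restriction de~$f$ à une droite générale $L\simeq\P^1_\bark$ est un morphisme dominant dont la fibre générique est une fibre géométrique de~$f$, donc rationnellement connexe par hypothèse; en passant à un modèle propre et lisse au-dessus de~$L$ et en appliquant \cite[Theorem~1.1]{ghs} combiné à \cite[Chapter~IV, Theorem~6.10]{kollarbook}, on obtient la section voulue. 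Les autres hypothèses sont immédiates: la lissité et la connexité rationnelle de la fibre générique sont supposées, et le lieu des fibres non scindées de~$f$ au-dessus de~$Q^\aff$, étant de codimension~$\geq 2$, est évité par une droite générale, puisque le fait d'être scindée se propage du point générique d'un diviseur à un ouvert dense de celui-ci.

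La véritable difficulté de l'énoncé ne réside donc pas dans cette réduction, mais à l'intérieur des théorèmes cités: pour~(i), c'est l'argument de descente de Skorobogatov qui produit, parmi les points rationnels de la base, un point situé dans le sous-ensemble hilbertien prescrit et au-dessus duquel la fibre porte un point adélique; pour~(ii), c'est l'emploi par Harari du lemme formel, joint aux flèches de spécialisation $\Br(X)\to\Br(X_q)$, pour imposer de surcroît l'orthogonalité à~$\Br(X_q)$. Dans la présente déduction, la seule chose qui requière un véritable apport est la vérification, via Graber--Harris--Starr, que la connexité rationnelle de la fibre générique fournit la section réclamée par \emph{loc.\ cit.}; tout le reste n'est qu'une mise en correspondance d'hypothèses.
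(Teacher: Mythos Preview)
Your proposal is correct and follows exactly the same approach as the paper: the paper does not give a proof of this theorem either but simply cites \cite[Theorem~1]{skofibration}, \cite[Theorem~2.1]{skorodescent} for~(i) and \cite[Théorème~3.2.1 et la remarque à la fin du~\textsection3.2]{hararifleches} for~(ii), adding only that the hypotheses of these references are satisfied thanks to \cite[Theorem~1.1]{ghs}. Your write-up is more expansive about why Graber--Harris--Starr is the relevant input, but the substance is identical.
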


Le théorème~\ref{th:fibration skohar} servira dans la démonstration du
théorème~\ref{thm:fibration presque favorable} ci-dessous.

\subsection{Fibres presque toutes scindées sur \texorpdfstring{$\bark$}{k̅}}

Le morphisme $f\otimes_k\bark:X\otimes_k\bark\to\P^n_\bark$
admet une section lorsque $n=1$, d'après \cite[Theorem~1.1]{ghs}.
Dans ce cas, ses fibres sont donc scindées.
Lorsque $n>1$, en revanche,
la condition que les fibres de $f\otimes_k\bark$ au-dessus des points de codimension~$1$
de $Q^\aff \otimes_k \bark$ sont scindées
est une condition forte, rarement satisfaite en pratique.
Sous cette hypothèse, on peut établir l'énoncé inconditionnel suivant,
qui servira dans la démonstration du théorème~\ref{th:66} ci-dessous (c'est-à-dire
du théorème~\ref{th points rat hr}).

\begin{thm}
\label{thm:fibration presque favorable}
Supposons les fibres de~$f$ au-dessus des points de codimension~$1$ de~$Q$
et les fibres de $f\otimes_k\bark$ au-dessus des points de codimension~$1$ de $Q^\aff\otimes_k\bark$
scindées.
\begin{enumerate}
\item[(i)]
Si~$X_q(k)$ est dense dans $X_q(\A_k)$ pour tout point rationnel~$q$ d'un sous-ensemble
hilbertien de~$Q$, alors~$X(k)$ est dense dans $X(\A_k)^{\Br_1(X)}$.
\item[(ii)]
Si $X_q(k)$ est dense dans $X_q(\A_k)^{\Br(X_q)}$
pour tout point rationnel~$q$ d'un sous-ensemble hilbertien de~$Q$,
alors~$X(k)$ est dense dans $X(\A_k)^{\Br(X)}$.
\end{enumerate}
\end{thm}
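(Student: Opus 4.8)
L'idée est de mettre en œuvre la méthode des fibrations de Harari et Skorobogatov au-dessus de la base~$\P^n_k$, en utilisant le théorème~\ref{th:fibration skohar} comme ingrédient principal, et de localiser la discussion au voisinage du tore~$Q$. Soit $(x_v)_{v\in\Omega}$ un point adélique de~$X$ appartenant à $X(\A_k)^{\Br_1(X)}$ dans le cas~(i), resp.\ à $X(\A_k)^{\Br(X)}$ dans le cas~(ii); fixons un ensemble fini~$S$ de places et des voisinages des~$x_v$ pour $v\in S$. La projection fournit $(q_v)_{v\in\Omega}=(f(x_v))_{v\in\Omega}\in\P^n(\A_k)$. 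Comme~$\P^n_k$ vérifie l'approximation faible, on peut d'abord déplacer $(q_v)_v$ — et avec lui, grâce au théorème des fonctions implicites appliqué au morphisme lisse~$f$ au-dessus du lieu lisse, les points~$x_v$ pour $v\in S$ — de sorte que $q_v\in Q^\aff(k_v)$ en toute place, c'est-à-dire à l'écart de l'hyperplan à l'infini $H_\infty:=\P^n_k\setminus Q^\aff$. On se ramène ainsi à la fibration ouverte $f^{-1}(Q^\aff)\to Q^\aff=\A^n_k$.

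Le cœur de l'affaire est le groupe de Brauer vertical de cette fibration ouverte. Au-dessus d'un point de codimension~$1$ de~$Q$, la fibre de~$f$ est scindée sur~$k$ par hypothèse, de sorte que les classes verticales y ont un résidu nul. Au-dessus d'un point de codimension~$1$ de l'hypersurface des normes $Q^\aff\setminus Q=\{N_{E/k}=0\}$, la fibre n'est scindée que sur~$\bark$, et le résidu d'une classe verticale est la classe non ramifiée découpée par l'extension finie sur laquelle est définie une composante géométriquement intègre de multiplicité~$1$ de cette fibre; de tels résidus proviennent du~$H^1$ d'un groupe constant fini, si bien que les classes verticales correspondantes sont algébriques, c'est-à-dire appartiennent à~$\Br_1$ de $f^{-1}(Q^\aff)$ modulo les constantes. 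Ce sont précisément ces classes verticales algébriques qui distinguent le présent énoncé du théorème~\ref{th:fibration skohar} et qui contraignent la conclusion de~(i) à être la densité dans $X(\A_k)^{\Br_1(X)}$ plutôt que l'approximation faible pure. Le \og{}lemme formel\fg{} de Harari, appliqué avec le sous-groupe engendré par~$\Br_1(X)$ et un nombre fini de telles classes verticales (quitte à agrandir~$S$), fournit alors $q_0\in Q(k)$ arbitrairement proche de~$q_v$ pour $v\in S$ et tel que~$X_{q_0}$ ait un~$k_v$-point en toute place; dans le cas~(ii), la même manipulation, menée avec l'orthogonalité à tout~$\Br(X)$ et la compatibilité des résidus le long de~$f$, fournit de surcroît, en chaque place, un~$k_v$-point de~$X_{q_0}$ proche de~$x_v$, la collection de ces points étant orthogonale à $\Br(X_{q_0})$.

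En choisissant~$q_0$ dans un sous-ensemble hilbertien prescrit de~$Q$ — les conditions en nombre dénombrable définissant ce sous-ensemble hilbertien étant compatibles avec les conditions en nombre fini d'approximation et de solubilité locale ci-dessus, par un argument standard — on applique l'hypothèse sur la fibre: $X_{q_0}(k)$ est dense dans $X_{q_0}(\A_k)$ dans le cas~(i), resp.\ dans $X_{q_0}(\A_k)^{\Br(X_{q_0})}$ dans le cas~(ii). On en tire un~$k$-point de~$X_{q_0}$, donc de~$X$, aussi proche que voulu de~$(x_v)_v$ aux places de~$S$, ce qui conclut. Une variante des manipulations explicites ci-dessus consiste à ramener $n=[E:k]$ à~$1$ par récurrence: un pinceau générique d'hyperplans de~$\P^n_k$ présente~$X$, après résolution du lieu de base, comme fibrée au-dessus de~$\P^1_k$, de fibre générale la fibration $f^{-1}(H_t)\to H_t\simeq\P^{n-1}_k$ qui est à nouveau une instance du théorème avec~$E$ remplacé par une section hyperplane (les sous-ensembles hilbertiens se composant en cours de route); le cas $n=1$ se traite alors directement par la méthode des fibrations au-dessus de~$\P^1$ jointe au théorème~\ref{th:fibration skohar}, l'unique mauvaise fibre restante étant la fibre géométriquement éventuellement non scindée au-dessus du point $H_\infty\cap H_t$.

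Le principal obstacle est cette dernière fibre, au-dessus de~$H_\infty$. Comme elle peut n'être scindée même sur~$\bark$, elle contribue véritablement des classes transcendantes à~$\Br(X)$ (c'est le phénomène de Saltman--Bogomolov rappelé dans l'introduction), et le point délicat est double: dans le cas~(i), il faut montrer que restreindre la base à~$Q^\aff$ neutralise réellement ces classes verticales transcendantes, en sorte qu'elles n'obstruent pas l'existence de fibres~$X_{q_0}$ partout localement solubles avec $q_0\in Q(k)$; dans le cas~(ii), il faut vérifier que la restriction d'une telle classe à une fibre~$X_{q_0}$ suffisamment générale et partout localement soluble appartient déjà à $\Br(X_{q_0})$, et non seulement au groupe de Brauer d'une sous-variété ouverte, afin que l'hypothèse fibre par fibre s'applique. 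Rendre ces deux points précis est exactement là qu'interviennent les résultats de spécialisation de Harari~\cite{harariduke}, \cite{hararifleches}, et c'est l'étape qui exige le plus de soin.
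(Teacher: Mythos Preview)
Your proposal is not the paper's proof, and the step you flag as ``requiring the most care'' is a genuine gap rather than a technicality. The paper does not run a direct fibration argument over~$Q^\aff$; instead it performs a descent under a norm torus. One chooses a finite extension $\ell/k$ over which the boundary fibres become split, sets $Q' = R_{E\otimes_k\ell/k}\Gm$, and for each $r \in Q(k)$ pulls~$f$ back along the twisted norm $x \mapsto r\cdot N_{E\otimes_k\ell/E}(x)$ to obtain a fibration $f'^{(r)}:X'^{(r)}\to\P^{n'}_k$. The key observation (Lemme~\ref{lem:fibres scindees}) is that the fibres of~$f'^{(r)}$ over \emph{all} codimension-$1$ points of~$Q'^\aff$ are now split over~$k$: above~$Q'$ this is inherited from the hypothesis on~$Q$, and above the boundary of~$Q'^\aff$ the residue fields contain~$\ell$, so geometric splitness upgrades to genuine splitness. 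The resulting maps $c^{(r)}: V'^{(r)} \to V = f^{-1}(Q)$ are torsors under the torus $T=\Ker(N_{E\otimes_k\ell/E})$ and exhaust all twists as~$r$ varies over~$Q(k)$ (Lemme~\ref{lem:cr torseur}). Descent under~$T$ --- open descent \`a la Harari--Skorobogatov for~(i), Corollary~\ref{cor:descente} for~(ii) --- lifts the given adelic point to some~$V'^{(r)}$, and Theorem~\ref{th:fibration skohar} applied to~$f'^{(r)}$ concludes.

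Your direct approach stalls at the production of $q_0\in Q(k)$ with~$X_{q_0}$ everywhere locally soluble (and, in~(ii), carrying a Brauer-orthogonal adelic point). For $n>1$ with fibres only geometrically split along the boundary divisors of~$Q^\aff$, this step is not furnished by the results of~\cite{harariduke} or~\cite{hararifleches} that you invoke; absent the geometric-splitness hypothesis the corresponding statement is only known conditionally (Theorem~\ref{thm:fibration cas general pointsrat}), and the whole point of the present theorem is to render it unconditional under that hypothesis. Your intuition that the relevant vertical obstruction is algebraic is correct in spirit --- it is exactly what the paper exploits --- but the mechanism that makes this effective is descent under the norm torus~$T$, not a direct specialisation argument on the original base. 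Your induction-on-$n$ variant does not close up either: a generic hyperplane section of $Q\subset\P^n_k$ is not a quasi-trivial torus, so the structural hypotheses of the theorem are lost at the inductive step.
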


\begin{proof}
Comme~$Q^\aff$ ne contient qu'un nombre fini de points de codimension~$1$ hors de~$Q$,
il existe une extension finie $\ell/k$ telle que les fibres de $f\otimes_k\ell$
au-dessus des points de codimension~$1$ de $Q^\aff \otimes_k\ell$ soient scindées.
Soit $Q'=R_{E\otimes_k\ell/k}\Gm \subset Q'^\aff = R_{E\otimes_k\ell/k}\A^1$.
Pour $r\in Q(k)$,
soit $b^{\aff(r)}:Q'^\aff \to Q^\aff$ le morphisme défini par $b^{\aff(r)}(x)=rN_{E\otimes_k\ell/E}(x)$.
Le choix d'une base de~$\ell$ sur~$k$ définit une compactification $Q'^\aff \subset \P^{n'}_k$,
avec $n'=\dim(Q')$.
Définissons $V^\aff$, $V'^{\aff (r)}$ et $c^{\aff(r)}$
par le diagramme suivant, dont tous les carrés sont cartésiens
et dans lequel $X'^{(r)}$
est une compactification lisse arbitraire
de $V'^{\aff(r)}$ telle que~$f'^{(r)}$
soit un morphisme:
\begin{align*}
\xymatrix@C=.5em{
X'^{(r)} \ar[d]_(.45){f'^{(r)}} & \ar@{}[l]|*{\supset} V'^{\aff(r)} \ar[rrr]^(.55){c^{\aff(r)}} \ar[d] &&& V^\aff \ar[d] \ar@{}[r]|*{\subset} & X \ar[d]^(.45)f \\
\P^{n'}_k                            & \ar@{}[l]|*{\supset} Q'^\aff  \ar[rrr]^(.55){b^{\aff(r)}}        &&& Q^\aff   \ar@{}[r]|*{\subset} & \P^n_k \rlap{.}
}
\end{align*}

\begin{lem}
\label{lem:fibres scindees}
Pour tout $r\in Q(k)$,
les fibres de~$f'^{(r)}$ au-dessus des points de codimension~$1$
de~$Q'^\aff$ sont scindées.
\end{lem}

\begin{proof}
Soit $\xi \in Q'^\aff$ un point de codimension~$1$.
Le point $b^{\aff(r)}(\xi) \in Q^\aff$ est de codimension~$\leq 1$
puisque $(b^{\aff(r)})^{-1}(b^{\aff(r)}(\xi))$ contient~$\xi$ et est,
comme chaque fibre de~$b^{\aff(r)}$, de dimension $\dim(Q'^\aff)-\dim(Q^\aff)$.
Si $\xi \in Q'$, on a de plus $b^{\aff(r)}(\xi)\in Q$, de sorte que $f^{-1}(b^{\aff(r)}(\xi))$ est scindée,
donc $(f'^{(r)})^{-1}(\xi)$ aussi.
Si $\xi\notin Q'$, alors~$\ell$ se plonge $k$\nobreakdash-linéairement
dans le corps résiduel de~$\xi$.
Pour vérifier, dans ce cas, que
$(f'^{(r)})^{-1}(\xi)$ est scindée, il est donc loisible d'étendre les scalaires de~$k$ à~$\ell$,
ce qui permet de supposer que $\ell=k$.
La variété $f^{-1}(b^{\aff(r)}(\xi))$ est alors elle-même scindée, par définition de~$\ell$.
\end{proof}

Posons $V=f^{-1}(Q)$ et $V'^{(r)}=(f'^{(r)})^{-1}(Q')$ et notons $b^{(r)}:Q'\to Q$ et $c^{(r)}:V'^{(r)}\to V$
les morphismes induits par~$b^{\aff(r)}$ et~$c^{\aff(r)}$.  Posons $T=\Ker(b^{(1)})=\Ker\mkern1mu(N_{E\otimes_k\ell/E}:Q'\to Q)$.

\begin{lem}
\label{lem:cr torseur}
Pour $r\in Q(k)$, le morphisme $c^{(r)}$ est un torseur sous le tore~$T$.
Tout tordu de~$c^{(1)}$ (où $1 \in Q(k)$ est le neutre)
par un cocycle de $Z^1(k,T)$ est isomorphe à l'un des~$c^{(r)}$.
\end{lem}

\begin{proof}
En effet $b^{(r)}$ est un torseur sous~$T$ et
les classes de~$b^{(r)}$ et de~$b^{(1)}$ dans $H^1(Q,T)$ diffèrent par l'image
de~$r$ par l'application bord $Q(k)\to H^1(k,T)$, laquelle est surjective puisque $H^1(k,Q')=0$ en vertu
du théorème de Hilbert~90.
\end{proof}

Nous sommes maintenant en position d'établir~(i) et~(ii).

Supposons~$X_q(k)$ dense dans~$X_q(\A_k)$ pour tout point rationnel~$q$ d'un sous-ensemble hilbertien de~$Q$.
Fixons un point adélique $(P_v)_{v\in\Omega} \in X(\A_k)^{\Br_1(X)}$ et montrons qu'on peut l'approcher
par un point rationnel de~$X$.
Soit $A \subset \Br_1(V)$ le sous-groupe formé des
cup-produits d'un élément de $H^1(k,\widehat T)$ par $[c^{(1)}]\in H^1(V,T)$.
Comme~$A$ est fini,
le ``lemme formel'' de Harari
assure qu'il existe $(P'_v)_{v\in\Omega} \in V(\A_k)^A$
arbitrairement proche de~$(P_v)_{v\in\Omega}$
dans~$X(\A_k)$
(voir \cite[Proposition~1.1]{ctskodescent}).
D'après \cite[Theorem~8.4, Proposition~8.12]{haskoopendescent} appliqué à $c^{(1)}:V'^{(1)}\to V$
et d'après le lemme~\ref{lem:cr torseur},
il existe $r\in Q(k)$ et $(P''_v)_{v\in\Omega} \in V'^{(r)}(\A_k)$
relevant~$(P'_v)_{v\in\Omega}$.
Grâce au lemme~\ref{lem:fibres scindees}
et compte tenu que l'image réciproque par~$b^{(r)}$ d'un sous-ensemble hilbertien de~$Q$
est un sous-ensemble hilbertien de~$Q'$,
le théorème~\ref{th:fibration skohar}~(i) est applicable à~$f'^{(r)}$.
Il existe donc $P'' \in V'^{(r)}(k)$ arbitrairement proche,
dans $X'^{(r)}(\A_k)$, de~$(P''_v)_{v\in\Omega}$.
Alors $c^{(r)}(P'') \in X(k)$
est arbitrairement proche de~$(P_v)_{v\in\Omega}$ dans~$X(\A_k)$.

Supposons maintenant~$X_q(k)$ dense dans $X_q(\A_k)^{\Br(X_q)}$ pour tout point rationnel~$q$ d'un sous-ensemble hilbertien de~$Q$,
fixons un point adélique $(P_v)_{v\in\Omega} \in X(\A_k)^{\Br(X)}$ et montrons qu'on peut l'approcher
par un point rationnel de~$X$.
D'après le corollaire~\ref{cor:descente}
appliqué à $c^{(1)}:V'^{(1)}\to V$
et d'après le lemme~\ref{lem:cr torseur},
il existe $r\in Q(k)$ et $(P''_v)_{v\in\Omega} \in V'^{(r)}(\A_k)^{\Br_\nr(V'^{(r)})}$
tels que $(c^{(r)}(P''_v))_{v\in\Omega}$ soit arbitrairement proche
de~$(P_v)_{v\in\Omega}$ dans~$X(\A_k)$.
Comme au paragraphe précédent, le lemme~\ref{lem:fibres scindees}
permet d'appliquer le théorème~\ref{th:fibration skohar}~(ii) à~$f'^{(r)}$.
Il en résulte l'existence de $P'' \in V'^{(r)}(k)$
arbitrairement proche,
dans $X'^{(r)}(\A_k)$, de~$(P''_v)_{v\in\Omega}$.
Le point $c^{(r)}(P'') \in X(k)$
est alors arbitrairement proche de~$(P_v)_{v\in\Omega}$ dans~$X(\A_k)$.
\end{proof}

\begin{rmk}
Si~$X$ est une variété propre et lisse sur~$k$,
si $V\subset X$ est un ouvert dense tel que $\bark[V]^*=\bark{}^*$
et si les groupes $\Pic(X_{\bark})$ et $\Pic(V_{\bark})$ sont sans torsion,
le théorème~\ref{thm:fibration presque favorable}~(i)
et la proposition~\ref{prop:sectionbark}~(i)
permettent de justifier l'implication
suivante, énoncée dans \cite[Remark~3.9]{wslc}:
si tout torseur universel de~$V$ vérifie l'approximation faible,
alors tout torseur universel de~$X$ vérifie l'approximation faible.
En effet, fixons un torseur universel~$Y$ de~$X$.
La proposition~\ref{prop:comparaisontorseurs}
fournit un ouvert dense $W\subset Y$, un tore quasi-trivial~$Q$ sur~$k$
et un morphisme lisse $\pi:W \to Q$ dont les fibres sont des torseurs universels de~$V$.
La proposition~\ref{prop:sectionbark}~(i)
assure que $\pi\otimes_k\bark$ admet une section
et donc que si $\pi':Z \to \P^n_k$ désigne une fibration compactifiant~$\pi$,
les fibres de $\pi'\otimes_k\bark$ au-dessus des points de codimension~$1$ de $Q^\aff\otimes_k\bark$
sont scindées.  Par le théorème~\ref{thm:fibration presque favorable}~(i),
il s'ensuit que si tout torseur universel de~$V$ vérifie l'approximation faible,
alors $Z(k)$ est dense dans $Z(\A_k)^{\Br_1(Z)}$.
Or $\Br_1(Z)=\Br_0(Z)$ (voir \cite[Th\'eor\`eme~2.1.2]{ctsandescent2}): donc~$Z$
et~$Y$
vérifient l'approximation faible.
\end{rmk}

\subsection{Fibres non scindées sur \texorpdfstring{$\bark$}{k̅}}

Les deux énoncés suivants ne font aucune
hypothèse sur les fibres de~$f$ au-dessus des points de codimension~$1$
de~$Q^\aff$.
Le théorème~\ref{thm:fibration cas general pointsrat} servira dans la preuve
du théorème~\ref{th:pointsrationnelsconjectural} ci-dessous;
le théorème~\ref{thm:fibration cas general zerocycles}, dans celle
du théorème~\ref{th but}.

\newcommand{\citehwnvc}{\cite[Corollary~9.25]{hw}}
\begin{thm}[\citehwnvc]
\label{thm:fibration cas general pointsrat}
Si la conjecture \cite[Conjecture~9.1]{hw} est vraie pour~$k$ et si $X_q(k)$ est dense dans $X_q(\A_k)^{\Br(X_q)}$
pour tout point rationnel~$q$ d'un sous-ensemble hilbertien de~$Q$,
alors~$X(k)$ est dense dans $X(\A_k)^{\Br(X)}$.
\end{thm}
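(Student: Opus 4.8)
Le plan est de déduire l'énoncé directement du théorème de fibration \cite[Corollary~9.25]{hw}, dont les hypothèses sont façonnées pour la situation géométrique fixée au début du présent paragraphe. La démonstration se réduira donc à vérifier que les données $(k, E, Q \subset Q^\aff \subset \P^n_k, f:X \to \P^n_k)$ relèvent de ce corollaire, puis à l'invoquer tel quel.

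Recensons ce qu'il y a à mettre en correspondance. Du côté de l'espace total, $X$ est irréductible, propre et lisse sur~$k$, le morphisme~$f$ est dominant et sa fibre générique est rationnellement connexe: tout cela figure parmi les hypothèses permanentes. Du côté de la base, après extension des scalaires à~$\bark$, la $k$\nobreakdash-algèbre étale~$E$ devient isomorphe à~$\bark^n$, la forme norme $N_{E/k}$ au produit des~$n$ coordonnées, et $\P^n_k \setminus Q$ se présente, géométriquement, comme la réunion des $n+1$ hyperplans de coordonnées de~$\P^n_\bark$ (les~$n$ hyperplans de coordonnées de l'espace affine $Q^\aff$ et l'hyperplan à l'infini), permutés par $\Gal(\bark/k)$: c'est exactement la configuration de diviseurs \og{}mauvais\fg{} autorisée par \cite[Corollary~9.25]{hw}. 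Les deux ingrédients restants de ce corollaire sont l'hypothèse conditionnelle --- la validité de \cite[Conjecture~9.1]{hw} pour~$k$ --- et l'hypothèse de nature fibre à fibre --- la densité de~$X_q(k)$ dans $X_q(\A_k)^{\Br(X_q)}$ pour tout point rationnel~$q$ d'un sous-ensemble hilbertien de~$Q$ ---, qui figurent l'un et l'autre mot pour mot dans notre énoncé; la conclusion qu'il délivre, la densité de~$X(k)$ dans $X(\A_k)^{\Br(X)}$, est celle que l'on veut.

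Le point qui mérite d'être souligné --- et la raison pour laquelle cet énoncé ne résulte pas des théorèmes~\ref{th:fibration skohar} ou~\ref{thm:fibration presque favorable} --- est qu'aucune hypothèse n'est faite sur les fibres de~$f$ au-dessus des points de codimension~$1$ de~$Q^\aff$, ni au-dessus de l'hyperplan à l'infini: ces fibres peuvent être arbitrairement non scindées, même géométriquement. Ce qui permet à \cite[Corollary~9.25]{hw} de s'en affranchir est la méthode de descente et de fibration de \emph{loc.\ cit.}, employée au-dessus d'une base de restriction des scalaires de la forme $R_{E/k}\A^1$ --- mécanisme analogue au passage à $Q' = R_{E\otimes_k\ell/k}\Gm$ effectué dans la preuve du théorème~\ref{thm:fibration presque favorable} ---, combinée à l'apport conjectural \cite[Conjecture~9.1]{hw}: celui-ci fournit les points rationnels sur les variétés auxiliaires intervenant dans l'argument inductif, points au moyen desquels on éloigne le point adélique considéré des mauvaises fibres. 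Le travail propre au présent article se réduit ainsi à la traduction des hypothèses que l'on vient d'expliciter; l'essentiel est contenu dans~\cite{hw}, et il n'y a pas ici d'obstacle distinct à surmonter, sinon celui de s'assurer que les conditions de \og{}position générale\fg{} et de \og{}sous-ensemble hilbertien\fg{} de \cite[Corollary~9.25]{hw} sont bien celles qui ont été énoncées.
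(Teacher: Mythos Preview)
Your proposal is correct and matches the paper's approach exactly: the paper does not give a proof at all but simply attributes the statement to \cite[Corollary~9.25]{hw} in the theorem header, treating it as a direct citation. Your additional verification that the setup of \textsection\ref{sec:fibrations tore qtriv} fits the hypotheses of that corollary is reasonable elaboration, though the paper itself deems it unnecessary.
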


\newcommand{\citehwhq}{\cite[Corollary~8.4~(1)]{hw}}
\begin{thm}[\citehwhq]
\label{thm:fibration cas general zerocycles}
Si~$X_q$ vérifie la conjecture~$(E)$
pour tout point fermé~$q$ d'un sous-ensemble hilbertien de~$Q$,
alors~$X$ vérifie la conjecture~$(E)$.
\end{thm}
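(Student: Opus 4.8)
Le plan est de déduire l'énoncé directement de \citehwhq, dont il n'est, pour l'essentiel, qu'une transcription dans les notations fixées au début du~\textsection\ref{sec:fibrations tore qtriv}. La première étape serait de constater que la situation considérée relève bien du cadre de \emph{loc.\ cit.}: la variété~$X$ est propre, lisse et irréductible, le morphisme $f:X\to\P^n_k$ est dominant à fibre générique rationnellement connexe, et, une fois choisie la base de~$E$, le tore quasi-trivial $Q=R_{E/k}\Gm$ est un ouvert dense de $\P^n_k$ dont le complémentaire est, après extension des scalaires à~$\bark$, une réunion de $n+1$ hyperplans en position générale: les~$n$ composantes du diviseur $\{N_{E/k}=0\}$ de $Q^\aff=\A^n_k$ et l'hyperplan à l'infini $\P^n_k\setminus Q^\aff$. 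C'est précisément le bord torique de~$Q$, c'est-à-dire la géométrie de base pour laquelle \cite{hw} établit la méthode des fibrations pour les zéro-cycles; l'hypothèse de \citehwhq est exactement que~$X_q$ vérifie la conjecture~$(E)$ lorsque~$q$ parcourt les points fermés d'un sous-ensemble hilbertien de~$Q$, et sa conclusion que~$X$ vérifie la conjecture~$(E)$. Le travail se réduirait donc à s'assurer que les hypothèses coïncident.

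Le point qui mérite d'être souligné, et la raison pour laquelle on dispose ici d'un énoncé \emph{inconditionnel} (à comparer avec le théorème~\ref{thm:fibration cas general pointsrat}, qui repose sur la conjecture \cite[Conjecture~9.1]{hw}), est qu'aucune hypothèse de scindage n'est imposée aux fibres de~$f$ au-dessus des points de codimension~$1$ de~$Q^\aff$. C'est un trait propre aux zéro-cycles: là où, pour les points rationnels, la présence d'un point local adéquat sur une fibre mauvaise réclame que cette fibre possède une composante géométriquement irréductible de multiplicité~$1$ (condition de scindage), les zéro-cycles se contentent d'un point fermé de la fibre, quitte à étendre le corps local. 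Toute la souplesse correspondante est organisée par l'appareil de \cite{hw}.

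Si l'on souhaitait reconstruire la démonstration plutôt que l'invoquer, on procéderait ainsi. Partant d'un élément $(z_v)_{v\in\Omega}$ de $\CHzAhat(X)$ orthogonal à~$\Br(X)$, on le pousse par~$f$ en une famille de zéro-cycles locaux sur~$\P^n_k$. L'arithmétique de la base est ici particulièrement simple (le tore~$Q$ étant quasi-trivial, il vérifie l'approximation faible et son groupe de Brauer est réduit aux classes constantes), de sorte qu'à l'aide d'une variante pour les zéro-cycles du ``lemme formel'' de Harari on peut ajuster cette famille pour la faire provenir d'un point fermé~$\theta$ de~$Q$, choisi dans le sous-ensemble hilbertien donné, tout en fabriquant sur la fibre~$X_\theta$ un élément de $\CHzAhat(X_\theta)$ orthogonal à~$\Br(X_\theta)$ et relevant $(z_v)_{v\in\Omega}$. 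On applique alors la conjecture~$(E)$ à~$X_\theta$ pour globaliser cet élément, que l'on pousse ensuite en avant sur~$X$. L'obstacle principal (et c'est là tout le contenu de \cite{hw}) réside dans la gestion simultanée, place par place, des contributions (résidus de classes de Brauer, indices locaux) des fibres non scindées situées au-dessus des points de codimension~$1$ et de la loi de réciprocité globale, laquelle contraint le choix de~$\theta$; c'est cette articulation délicate que l'appareil de \emph{loc.\ cit.}\ prend en charge.
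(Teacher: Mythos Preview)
Your proposal is correct and matches the paper's treatment: the paper gives no proof of this theorem beyond the attribution to \cite[Corollary~8.4~(1)]{hw} in the statement itself, and your first paragraph correctly verifies that the standing hypotheses of the~\textsection\ref{sec:fibrations tore qtriv} (proper smooth irreducible~$X$, dominant $f:X\to\P^n_k$ with rationally connected generic fibre, $Q=R_{E/k}\Gm$ embedded in~$\P^n_k$ via the chosen basis) place us exactly in the situation of that corollary. The additional discussion you offer about why the zero-cycle statement is unconditional is accurate and helpful context, though it goes beyond what the paper records.
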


Il paraît raisonnable d'espérer que le théorème~\ref{thm:fibration cas general pointsrat}
puisse être rendu inconditionnel dans le cas où les fibres de~$f$ au-dessus des points
de codimension~$1$ de~$Q$ sont scindées (hypothèse satisfaite dans toutes les applications
de ce théorème envisagées dans le présent article).

\section{Revêtements étales des espaces homogènes}
\label{sec:pi1eh}

Dans tout le~\textsection\ref{sec:pi1eh}, on fixe un corps~$k$ de caractéristique nulle,
une clôture algébrique~$\bark$ de~$k$ et un groupe algébrique~$G$ connexe, semi-simple et
simplement connexe sur~$k$.

\subsection{Action extérieure galoisienne sur le stabilisateur}
\label{subsec:action exterieure}

Soit~$V$ un espace homogène de~$G$.
Fixons $\bar v \in V(\bark)$ et supposons son stabilisateur $H_{\bar v} \subset G(\bark)$
fini.
Comme $\pi_1^\et(G_\bark,1)=1$ et comme $V_\bark=G_\bark/H_{\bar v}$,
on a canoniquement $\pi_1^\et(V_\bark,\bar v)=H_{\bar v}$.
La suite exacte fondamentale
\begin{align}
\label{se:pi1}
\xymatrix{
1 \ar[r] & \pi_1^{\et}(V_\bark,\bar v) \ar[r] & \pi_1^{\et}(V,\bar v)\ar[r] & \Gal(\bark/k) \ar[r] & 1
}
\end{align}
induit
donc une action extérieure canonique de $\Gal(\bark/k)$ sur~$H_{\bar v}$.
Celle-ci induit une action continue de $\Gal(\bark/k)$ sur $H_{\bar v}^\ab$.
Remarquons
qu'une action extérieure naturelle de~$\Gal(\bark/k)$
sur~$H_{\bar v}$ existerait même si nous n'avions pas supposé~$G$ simplement connexe
(voir \cite[\textsection2.3]{demarchelucchinireduction}).
Remarquons, d'autre part,
que la classe d'isomorphisme de~$H_{\bar v}$ vu comme groupe fini
muni d'une action extérieure de~$\Gal(\bark/k)$
ne dépend pas du choix de~$\bar v$.

\subsection{Groupe de Picard}
\label{subsec:stab picard}

Comme~$G$ est semi-simple, le lemme de Rosenlicht assure que $\bark[G]^*=\bark{}^*$ et
donc $\bark[V]^*=\bark{}^*$.
Il s'ensuit que $H^1(V_{\bark},\Q/\Z(1))=\Pic(V_{\bark})_{\text{tors}}$,
par théorie de Kummer.
D'autre part,
le groupe $\Pic(V_{\bark})$ est de torsion
puisque~$G_\bark$ est un revêtement de~$V_{\bark}$
et que $\Pic(G_{\bark})=0$ (voir
\cite[\textsection4.3, Theorem~1]{voskbirinv}).
De ces remarques et de l'égalité $H^1(V_{\bark},\Q/\Z(1))=\Hom(\pi_1^\et(V_{\bark},\bar v),\Q/\Z(1))$,
on tire un isomorphisme canonique
\begin{align}
\label{eq:pic hab dual}
\Pic(V_{\bark})=\Hom(H_{\bar v}^\ab,\bark{}^*)
\end{align}
de groupes finis munis d'une action continue de $\Gal(\bark/k)$.
Comme nous le verrons plus bas,
cet isomorphisme s'identifie,
via~\eqref{eq:identification type},
au type du torseur $G_\bark/H_{\bar v}'\to G_\bark/H_{\bar v}=V_\bark$ sous~$H_{\bar v}^\ab$,
où~$H_{\bar v}'$ désigne le sous-groupe dérivé de~$H_{\bar v}$
(lemme~\ref{lem:typerevetement} ci-dessous).

\subsection{Relèvement de l'action de \texorpdfstring{$G$}{G}}

\begin{prop}
\label{prop:relevement action}
Soit~$V$ un espace homogène de~$G$ à stabilisateur géométrique fini.
Soit $\pi:W\to V$ un revêtement étale. Si~$W$ est géométriquement irréductible,
il existe une unique action de~$G$ sur~$W$
telle que~$\pi$ soit $G$\nobreakdash-équivariant.
Munie de cette action, la variété~$W$ est un espace homogène de~$G$ à stabilisateur géométrique fini.
\end{prop}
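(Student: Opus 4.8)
The plan is to prove everything first over~$\bark$, where $V_{\bark}$ is literally a quotient $G_{\bark}/H_{\bar v}$, and then to descend to~$k$ by a rigidity argument for sections of étale morphisms. Since~$W$ is geometrically irreducible, $W_{\bark}$ is connected, so $\pi\otimes_k\bark$ is a connected finite étale cover of $V_{\bark}$. Because $\pi_1^\et(G_{\bark},1)=1$, the identification $\pi_1^\et(V_{\bark},\bar v)=H_{\bar v}$ turns the category of finite étale covers of $V_{\bark}$ into that of finite $H_{\bar v}$-sets, connected covers corresponding to transitive $H_{\bar v}$-sets, i.e.\ to subgroups of~$H_{\bar v}$; moreover the cover attached to a subgroup $H'\subseteq H_{\bar v}$ is $G_{\bark}/H'\to G_{\bark}/H_{\bar v}$, equivariantly for the left translation action of~$G_{\bark}$. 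Hence there are a subgroup $H'\subseteq H_{\bar v}$ and a $V_{\bark}$-isomorphism $\psi:W_{\bark}\to G_{\bark}/H'$. First I would transport along~$\psi$ the translation action of~$G_{\bark}$ on $G_{\bark}/H'$: this yields an action of~$G_{\bark}$ on $W_{\bark}$ for which $\pi\otimes_k\bark$ is $G_{\bark}$-equivariant, and exhibits $W_{\bark}\simeq G_{\bark}/H'$ as a homogeneous space of~$G_{\bark}$ with finite stabilizer~$H'$.

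\textbf{Descent.} Write $\alpha_{\bark}:G_{\bark}\times_{\bark}W_{\bark}\to W_{\bark}$ for this action and $a:G\times_kV\to V$ for the given action on~$V$. Then, after extension of scalars, $\pi\circ\alpha_{\bark}=a\circ(\mathrm{id}_G\times\pi)$, and $\alpha_{\bark}$ restricts to $\mathrm{id}_{W_{\bark}}$ on $\{1\}\times W_{\bark}$ (translation by~$1$ being the identity). I would then invoke the following rigidity remark: if~$S$ is a connected scheme and $E\to S$ a separated étale morphism, two $S$-morphisms $S\to E$ agreeing at one point of~$S$ are equal, since their equalizer is open (as $E/S$ is unramified) and closed (as $E/S$ is separated). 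Applied with $S=G\times_kW$, which is connected because~$G$ and~$W$ are geometrically irreducible, and $E$ the pullback of~$\pi$ along $a\circ(\mathrm{id}_G\times\pi):G\times_kW\to V$, it shows that a lift through~$\pi$ of $a\circ(\mathrm{id}_G\times\pi)$ restricting to $\mathrm{id}_W$ on $\{1\}\times W$ is unique if it exists. For each $\sigma\in\Gal(\bark/k)$ the morphism ${}^{\sigma}\alpha_{\bark}$ is again such a lift, since~$a$ and~$\pi$ are defined over~$k$ and $1\in G(k)$; hence ${}^{\sigma}\alpha_{\bark}=\alpha_{\bark}$, and Galois descent yields a $k$-morphism $\alpha:G\times_kW\to W$ with $\pi\circ\alpha=a\circ(\mathrm{id}_G\times\pi)$ and $\alpha|_{\{1\}\times W}=\mathrm{id}_W$.

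\textbf{Action axioms, uniqueness, conclusion.} Over the connected scheme $G\times_kG\times_kW$, both $\alpha\circ(\mathrm{id}_G\times\alpha)$ and $\alpha\circ(m_G\times\mathrm{id}_W)$ are lifts through~$\pi$ of $(g,h,w)\mapsto(gh)\cdot\pi(w)$ and both agree on $\{1\}\times\{1\}\times W$, so they coincide by the rigidity remark; together with $\alpha|_{\{1\}\times W}=\mathrm{id}_W$ this makes~$\alpha$ an action, for which~$\pi$ is $G$-equivariant by construction. Any $G$-action making~$\pi$ equivariant satisfies $\pi\circ(-)=a\circ(\mathrm{id}_G\times\pi)$ and restricts to $\mathrm{id}_W$ on $\{1\}\times W$, hence equals~$\alpha$ by rigidity, giving uniqueness. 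Finally $W$ is nonempty (being geometrically irreducible) and $W_{\bark}\simeq G_{\bark}/H'$ with~$H'$ finite, so~$W$ is a homogeneous space of~$G$ with finite geometric stabilizer. This argument is essentially soft; the only step requiring care is the descent, where one must verify that the two conditions pinning down~$\alpha$ are $\Gal(\bark/k)$-stable and apply the rigidity remark over the relevant connected product schemes.
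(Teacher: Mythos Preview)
Your proof is correct and follows essentially the same approach as the paper's: both hinge on the rigidity of sections of a separated étale morphism over a connected base, both construct the action over~$\bark$ via the identification $W_{\bark}\simeq G_{\bark}/H'$ for a subgroup $H'\subset H_{\bar v}$, and both descend to~$k$ using uniqueness. The paper is slightly more terse---it proves uniqueness first, then observes that existence and uniqueness over~$\bark$ imply existence over~$k$ by Galois descent (the action axioms being automatic since they hold after base change to~$\bark$)---whereas you spell out the Galois-invariance of~$\alpha_{\bark}$ and the verification of associativity explicitly; but these are presentational differences, not substantive ones.
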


\begin{proof}
Prouvons d'abord l'unicité.
Notons $m_V:G\times_k V \to V$ l'action de~$G$ sur~$V$.
Deux actions $m^1_W,m^2_W:G \times_k W \to W$
de~$G$ sur~$W$ rendant~$\pi$ équivariant
donnent naissance à deux morphismes
du revêtement étale $\mathrm{Id_G}\times \pi:G \times_k W \to G \times_k V$
vers le revêtement étale de $G\times_k V$ obtenu à partir de~$\pi$
par le changement de base~$m_V$.
Ces deux morphismes de revêtements étales coïncident le long de $\{1\} \times_k W$.
Comme $G \times_k W$ est connexe, ils sont donc nécessairement égaux; d'où $m^1_W=m^2_W$.

Par descente galoisienne, l'existence et l'unicité de l'action recherchée sur~$k$ résultent
de son existence et de son unicité sur~$\bark$.
Il reste donc, pour conclure, à vérifier son existence sur~$\bark$.
Avec les notations du~\textsection\ref{subsec:action exterieure},
comme $G_{\bark} \to G_{\bark}/H_{\bar v}=V_{\bark}$ est le revêtement universel de~$V_{\bark}$
et comme~$W_{\bark}$ est connexe, le choix d'un relèvement $\bar w \in W(\bark)$
de~$\bar v$
permet d'identifier le groupe $\pi_1^\et(W_\bark,\bar w)$
à un sous-groupe de $H_{\bar v}$
et le morphisme~$\pi\otimes_k\bark$
à la projection $G_{\bark}/\pi_1^\et(W_\bark,\bar w)\to G_{\bark}/H_{\bar v}$.
L'action recherchée existe donc bien sur~$\bark$.
\end{proof}

\subsection{Torseurs et revêtements}

Si~$\Rhat$ est un groupe abélien fini muni d'une action continue de $\Gal(\bark/k)$
et $\lambda:\Rhat\to\Pic(V_{\bark})$ est un homomorphisme $\Gal(\bark/k)$\nobreakdash-équivariant injectif,
on pose $R=\Homrond(\Rhat,\Gm)$
et l'on note $\widehat\lambda: H_{\bar v} \to R(\bark)$ la composée de la flèche
d'abélianisation
$H_{\bar v} \to H_{\bar v}^\ab$ et de l'homomorphisme $H_{\bar v}^\ab \to R(\bark)$
dual de~$\lambda$
compte tenu de~\eqref{eq:pic hab dual}.

\begin{prop}
\label{prop:torseurs sont eh}
Avec les notations qui précèdent,
soit $\pi:W\to V$ un torseur sous~$R$, de type~$\lambda$.
Il existe une unique action de~$G$ sur~$W$
telle que~$\pi$ soit $G$\nobreakdash-équivariant.
Munie de cette action, la variété~$W$ est un espace homogène de~$G$ à stabilisateur géométrique fini.
De plus, si $\bar w \in W(\bark)$ est tel que $\pi(\bar w)=\bar v$
et si $H_{\bar w} \subset G(\bark)$
désigne son stabilisateur,
le morphisme $\widehat\lambda:H_{\bar v}\to R(\bark)$
induit une suite exacte
\begin{align}
\label{se:hw hv r}
\xymatrix@C=1.5em{
1 \ar[r] & H_{\bar w} \ar[r] & H_{\bar v} \ar[r]^(.32){\widehat\lambda} & R(\bark) \to 1
}
\end{align}
de groupes finis.
Enfin, les actions extérieures de $\Gal(\bark/k)$
sur~$H_{\bar w}$ et~$H_{\bar v}$ sont compatibles,
au sens où
pour tout $\gamma \in \Gal(\bark/k)$,
les automorphismes extérieurs
de~$H_{\bar w}$ et~$H_{\bar v}$
déterminés par~$\gamma$
peuvent être représentés par des automorphismes~$\gamma_{\bar w}$
et~$\gamma_{\bar v}$ de ces deux groupes tels que $\gamma_{\bar w}(x)=\gamma_{\bar v}(x)$
pour tout $x \in H_{\bar w}$.
\end{prop}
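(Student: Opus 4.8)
The plan is to deduce everything from Proposition \ref{prop:relevement action}, whose hypotheses need to be checked, and then to identify the geometric fibre of $\pi$ combinatorially.

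First I would observe that since $\lambda$ is injective, the torseur $\pi_\bark:W_\bark\to V_\bark$ under $R_\bark$ has nontrivial restriction to each cyclic subgroup of $\Rhat$ (by the description of the type via \eqref{eq:identification type} and \eqref{eq:pic hab dual}), so $W_\bark$ is connected, i.e. $W$ is geometrically irreducible. Concretely, $W_\bark$ is the quotient of the universal cover $G_\bark$ of $V_\bark$ by the subgroup $H_{\bar w}=\Ker(\widehat\lambda:H_{\bar v}\to R(\bark))$: indeed a torseur of type $\lambda$ under $R$ over $G_\bark/H_{\bar v}$ corresponds, through $\pi_1^\et(V_\bark,\bar v)=H_{\bar v}$ and the identification of torseurs under a finite commutative group with étale covers (since $R(\bark)$ is finite — here $\bark[V]^*=\bark{}^*$ is used), to the homomorphism $H_{\bar v}\to R(\bark)$ dual to $\lambda$, which is exactly $\widehat\lambda$; the corresponding cover is $G_\bark/\Ker(\widehat\lambda)$. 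In particular $\pi$ is an étale cover, $W$ is geometrically irreducible, and $\widehat\lambda$ is surjective onto $R(\bark)$, so \eqref{se:hw hv r} is exact. Then Proposition \ref{prop:relevement action}, applied to the étale cover $\pi:W\to V$, furnishes the unique $G$-action making $\pi$ equivariant and shows that $W$, so equipped, is a homogeneous space of $G$ with finite geometric stabiliser — and by construction that stabiliser, at $\bar w$, is $H_{\bar w}$.

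The compatibility of the outer Galois actions on $H_{\bar w}$ and $H_{\bar v}$ is the one genuinely new point, and I expect it to be the main (though still soft) obstacle. The idea is that both actions come from the \emph{same} fundamental exact sequence \eqref{se:pi1}: the choice of $\bar w\in W(\bark)$ above $\bar v$ identifies $\pi_1^\et(W_\bark,\bar w)=H_{\bar w}$ as a subgroup of $\pi_1^\et(V_\bark,\bar v)=H_{\bar v}$, and it identifies $\pi_1^\et(W,\bar w)$ with the preimage of $\pi_1^\et(W_\bark,\bar w)$ under $\pi_1^\et(V,\bar v)\to\Gal(\bark/k)$'s pullback — more precisely $\pi_1^\et(W,\bar w)\subset\pi_1^\et(V,\bar v)$ is an open subgroup containing $H_{\bar w}$ and surjecting onto $\Gal(\bark/k)$, since $\pi$ is defined over $k$ and $W$ is geometrically irreducible. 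For $\gamma\in\Gal(\bark/k)$, lift it to $\tilde\gamma\in\pi_1^\et(W,\bar w)\subset\pi_1^\et(V,\bar v)$; conjugation by $\tilde\gamma$ preserves $H_{\bar w}$ and $H_{\bar v}$, induces the outer action of $\gamma$ on each via \eqref{se:pi1} for $W$ and for $V$ respectively, and the two inner automorphisms agree on $H_{\bar w}$ because they are literally the same automorphism of $\pi_1^\et(V,\bar v)$ restricted to the two subgroups. Taking $\gamma_{\bar w}$ and $\gamma_{\bar v}$ to be these restrictions of conjugation by $\tilde\gamma$ gives exactly the asserted compatibility.

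Finally, the lone remaining verification is that, once a common $\tilde\gamma$ is fixed, conjugation by $\tilde\gamma$ does represent the \emph{canonical} outer actions of \S\ref{subsec:action exterieure} — but that is immediate, since the canonical outer action on $H_{\bar v}$ is by definition the one \eqref{se:pi1} induces, i.e. conjugation by any lift of $\gamma$ in $\pi_1^\et(V,\bar v)$, and $\tilde\gamma$ is such a lift; likewise for $H_{\bar w}$ using the fundamental sequence for $W$, into which $\tilde\gamma$ maps to a lift of $\gamma$. So the whole argument is a diagram chase in the two fundamental exact sequences tied together by the inclusion $\pi_1^\et(W,\bar w)\hookrightarrow\pi_1^\et(V,\bar v)$; no hard input beyond Proposition \ref{prop:relevement action} and the Kummer-theoretic identification \eqref{eq:pic hab dual} is needed.
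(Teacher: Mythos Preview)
Your argument is correct and follows essentially the same approach as the paper: you identify $W_{\bark}$ with $G_{\bark}/\Ker(\widehat\lambda)$ by computing the type of this cover (which is exactly the paper's Lemma~\ref{lem:typerevetement}), apply Proposition~\ref{prop:relevement action}, read off the stabiliser, and deduce the compatibility of outer actions from the inclusion $\pi_1^\et(W,\bar w)\hookrightarrow\pi_1^\et(V,\bar v)$ and its surjection onto $\Gal(\bark/k)$. The paper compresses the last step into the phrase ``existence of a morphism between the exact sequences~\eqref{se:pi1} associated to~$W$ and to~$V$,'' which your lift $\tilde\gamma\in\pi_1^\et(W,\bar w)$ makes explicit.
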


\begin{proof}
Commençons par un lemme.

\begin{lem}
\label{lem:typerevetement}
Le revêtement étale
\begin{align}
\label{eq:torseur lambda geom}
G_{\bark}/\Ker\big(\mkern1mu\widehat\lambda\mkern1mu\big) \to G_{\bark}/H_{\bar v}= V_{\bark}
\end{align}
est un torseur sous~$R_{\bark}$ dont le type est~$\lambda$.
\end{lem}

\begin{proof}
Le morphisme $\widehat\lambda$ étant surjectif,
ce revêtement est bien un torseur sous~$R_{\bark}$.
Sa classe
dans $H^1(V_\bark,R_{\bark})=\Hom(\pi_1^\et(V_\bark,\bar v),R(\bark))$
est le morphisme composé
\begin{align}
\label{eq:type morphisme compose}
\pi_1^\et(V_\bark,\bar v)=H_{\bar v}\to
H_{\bar v}/\Ker\big(\mkern1mu\widehat\lambda\mkern1mu\big) \xrightarrow{\widehat\lambda}
R(\bark)\rlap{.}
\end{align}
Compte tenu des identifications
$\Pic(V_\bark)=H^1(V_\bark,\Q/\Z(1))=\Hom(\pi_1^\et(V_\bark,\bar v),\Q/\Z(1))$,
son type envoie $\chi \in \Rhat=\Hom(R(\bark),\Q/\Z(1))$
sur le morphisme obtenu en composant~\eqref{eq:type morphisme compose}
avec $\chi:R(\bark)\to \Q/\Z(1)$.
Celui-ci représente bien $\lambda(\chi)$.
\end{proof}

Démontrons maintenant la proposition.
Comme le torseur~$\pi$ est de type~$\lambda$, il devient isomorphe,
sur~$\bark$,
à~\eqref{eq:torseur lambda geom}.
En particulier, la variété~$W$ est géométriquement irréductible.
L'existence et l'unicité de l'action de~$G$
résultent alors de la
proposition~\ref{prop:relevement action}.
En outre, il apparaît sur~\eqref{eq:torseur lambda geom} que
$H_{\bar w}=\Ker\big(\mkern1mu\widehat\lambda\mkern1mu\big)$,
d'où l'exactitude de~\eqref{se:hw hv r}.
Quant à la compatibilité des actions extérieures de
$\Gal(\bark/k)$
sur~$H_{\bar w}$ et~$H_{\bar v}$, elle découle de l'existence d'un morphisme
entre les suites exactes~\eqref{se:pi1} associées à~$W$
et à~$V$.
\end{proof}

\begin{cor}
\label{cor:torseurs universels eh}
Soit~$V$ un espace homogène de~$G$ à stabilisateur géométrique fini.
Notant~$H$ le stabilisateur d'un point géométrique de~$V$,
les torseurs universels de~$V$ sont des espaces homogènes de~$G$ à stabilisateur géométrique
isomorphe au sous-groupe dérivé~$H'$.
\end{cor}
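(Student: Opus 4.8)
The plan is to deduce the statement directly from Proposition~\ref{prop:torseurs sont eh}, the only work being the choice of the right input data and one identification of finite abelian groups. First I would note, using \textsection\ref{subsec:stab picard} (where the equality $\bark[V]^*=\bark{}^*$ and the finiteness of $\Pic(V_\bark)$ are established), that $R:=\Homrond(\Pic(V_\bark),\Gm)$ is a finite group of multiplicative type and that, by definition, a universal torsor of~$V$ is nothing but a torsor under~$R$ whose type is the identity homomorphism $\mathrm{id}\colon\Pic(V_\bark)\to\Pic(V_\bark)$ (in the sense of \cite{ctsandescent2}); should~$V$ admit no universal torsor, there is nothing to prove. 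I would then apply Proposition~\ref{prop:torseurs sont eh} with $\Rhat=\Pic(V_\bark)$ and $\lambda=\mathrm{id}_{\Pic(V_\bark)}$, the latter being obviously injective, so that the proposition's hypotheses hold.

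Next I would identify the homomorphism $\widehat\lambda\colon H_{\bar v}\to R(\bark)$ produced by that proposition. By construction it is the composite of the abelianization map $H_{\bar v}\to H_{\bar v}^\ab$ with the homomorphism $H_{\bar v}^\ab\to R(\bark)$ dual to~$\lambda$ via the canonical isomorphism~\eqref{eq:pic hab dual}, namely $\Pic(V_\bark)=\Hom(H_{\bar v}^\ab,\bark{}^*)$. When $\lambda$ is the identity, its dual is the biduality isomorphism $H_{\bar v}^\ab\xrightarrow{\ \sim\ }\Hom(\Hom(H_{\bar v}^\ab,\bark{}^*),\bark{}^*)=R(\bark)$ of finite abelian groups; hence $\widehat\lambda$ is, up to this identification, exactly the abelianization homomorphism, whose kernel is the derived subgroup $H'=H_{\bar v}'$. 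The exact sequence~\eqref{se:hw hv r} therefore reads $1\to H'\to H_{\bar v}\to H_{\bar v}^\ab\to 1$, and Proposition~\ref{prop:torseurs sont eh} then yields that every universal torsor of~$V$ is a homogeneous space of~$G$ whose geometric stabilizer $H_{\bar w}$ equals~$H'$, as desired.

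There is no real obstacle here; the only point requiring a little care is the compatibility of the identifications, namely that ``being a universal torsor'' matches ``having type $\lambda=\mathrm{id}$'' under~\eqref{eq:identification type}, and that the dualization in the definition of~$\widehat\lambda$ together with~\eqref{eq:pic hab dual} really produces the canonical biduality isomorphism and not some twist of it. Both are immediate from the bookkeeping already carried out in \textsection\ref{subsec:stab picard} and in Lemma~\ref{lem:typerevetement}, which records precisely that the geometric model $G_\bark/\Ker(\widehat\lambda)\to V_\bark$ is a torsor of type~$\lambda$. So the proof is essentially a one-line appeal to Proposition~\ref{prop:torseurs sont eh}.
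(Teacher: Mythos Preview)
Your proposal is correct and matches the paper's intent: the corollary is stated without proof, being an immediate consequence of Proposition~\ref{prop:torseurs sont eh} applied with $\Rhat=\Pic(V_{\bark})$ and $\lambda=\mathrm{id}$, exactly as you describe. Your identification of~$\widehat\lambda$ with the abelianization map via biduality is the right (and only) point to check, and it is correct.
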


\subsection{Passage aux sous-groupes de Sylow}

La proposition suivante servira dans la preuve du théorème~\ref{th but}.

\begin{prop}
\label{prop:eh sylow}
Soit~$V$ un espace homogène de~$G$, à stabilisateur géométrique fini.
Soit~$p$ un nombre premier.
Il existe une extension finie $\ell/k$ de degré premier à~$p$,
un espace homogène~$W$, sur~$\ell$, de $G \otimes_k \ell$,
à stabilisateur géométrique fini d'ordre une
puissance de~$p$,
et un morphisme fini étale $W \to V \otimes_k \ell$ de degré premier à~$p$.
\end{prop}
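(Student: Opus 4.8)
The plan is to build the cover in two steps: a Frattini-argument step reducing to the case where a Sylow $p$-subgroup of the geometric stabilizer is normal, and then a Schur--Zassenhaus step which, after a prime-to-$p$ extension of the base field, upgrades the normalizer to a genuine Sylow subgroup.

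Fix $\bar v\in V(\bark)$ with finite stabilizer $H=H_{\bar v}$, so that $\pi_1^\et(V_\bark,\bar v)=H$, and set $\pi=\pi_1^\et(V,\bar v)$ and $\Gamma=\Gal(\bark/k)$; recall the fundamental exact sequence $1\to H\to\pi\to\Gamma\to 1$. Fix a Sylow $p$-subgroup $P\subset H$. Conjugation in~$\pi$ preserves the normal subgroup~$H$ and hence permutes the Sylow $p$-subgroups of~$H$; since inner automorphisms of~$H$ already act transitively on them, the Frattini argument gives $\pi=H\cdot N_\pi(P)$, so that $N_\pi(P)$ surjects onto~$\Gamma$. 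The open subgroup $N_\pi(P)\subset\pi$ thus defines a geometrically irreducible finite étale cover $V_1\to V$ of degree $[H:N_H(P)]$, which is prime to~$p$; by Proposition~\ref{prop:relevement action}, $V_1$ is a homogeneous space of~$G$, morphism of homogeneous spaces over~$V$, with finite geometric stabilizer $N_H(P)$, in which~$P$ is now \emph{normal}. Since $V_1\to V$ is finite étale of degree prime to~$p$, it suffices to prove the proposition for~$V_1$; replacing $V$ by~$V_1$ and $H$ by $N_H(P)$, we may therefore assume $P\trianglelefteq H$. Then~$P$ is characteristic in~$H$, hence normal in~$\pi$, and $\bar H:=H/P$ is finite of order prime to~$p$.

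Set $\bar\pi=\pi/P$, so that $1\to\bar H\to\bar\pi\to\Gamma\to 1$. As $\bar\pi$ is profinite and $\bar H$ is finite, there is an open normal subgroup $L\trianglelefteq\bar\pi$ with $L\cap\bar H=1$; it maps isomorphically onto an open normal subgroup $\Gamma_0\trianglelefteq\Gamma$, and $\Gamma/\Gamma_0\simeq\bar\pi/L\bar H$ is finite. Let $\Gamma'$ be the subgroup of~$\Gamma$ between~$\Gamma_0$ and~$\Gamma$ whose image in $\Gamma/\Gamma_0$ is a Sylow $p$-subgroup; then $[\Gamma:\Gamma']$ is prime to~$p$, and we let $\ell/k$ be the corresponding finite extension, of degree $[\Gamma:\Gamma']$. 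Write $\pi'\subset\pi$ for the preimage of~$\Gamma'$ and $\bar\pi'=\pi'/P$. Then $\bar HL/L$ is a normal subgroup of the finite group $\bar\pi'/L$, of order $|\bar H|$ and of index $[\Gamma':\Gamma_0]$ a power of~$p$, so by Schur--Zassenhaus it has a complement $M/L$. Let $U\subset\pi'$ be the preimage of~$M$ under $\pi'\to\bar\pi'$; one checks directly that $U\cap H=P$ and $UH=\pi'$. Hence the open subgroup $U\subset\pi'$ defines a geometrically irreducible finite étale cover $W\to V\otimes_k\ell$ of degree $[\pi':U]=[H:P]$, which is prime to~$p$; by Proposition~\ref{prop:relevement action} it is a homogeneous space of $G\otimes_k\ell$, and its geometric stabilizer is $\pi_1^\et(W_\bark)=U\cap H=P$, a $p$-group. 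This proves the proposition.

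The substance of the argument is the descent: the geometric cover $G_\bark/P\to V_\bark$ is essentially canonical, but need not be definable over~$k$, nor even over a prime-to-$p$ extension of~$k$ without the preliminary reduction. The Frattini step removes precisely the ambiguity coming from conjugacy of Sylow subgroups and lets one pass to~$\bar\pi$; what remains is to split, over a suitable~$\ell$, an extension of~$\Gamma$ by the prime-to-$p$ group~$\bar H$, which one does by first trivializing it over an open normal~$\Gamma_0$ and then adjoining only the Sylow-$p$ layer of $\Gamma/\Gamma_0$, so that Schur--Zassenhaus applies to a finite group with coprime-order data. The only real bookkeeping is checking that each of the three choices—$N_\pi(P)$, the Sylow $p$-subgroup of $\Gamma/\Gamma_0$, the Schur--Zassenhaus complement—keeps both the field extension and the covering degree prime to~$p$; I expect that to be the one place where a little care is genuinely needed.
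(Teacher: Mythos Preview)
Your proof is correct, but the paper takes a markedly shorter route. Instead of the two-step Frattini/Schur--Zassenhaus argument, the paper proves a single group-theoretic lemma: for any exact sequence $1\to H\to E\to\Gamma\to 1$ of profinite groups with~$H$ finite, choose a pro-$p$-Sylow subgroup $E_p\subset E$; then its image in~$\Gamma$ is a pro-$p$-Sylow $\Gamma_p$ and, by a supernatural-index count, $H_p:=H\cap E_p$ is automatically a Sylow $p$-subgroup of~$H$. Applied to the fundamental exact sequence, this gives directly an open subgroup $E_p\subset\pi_1^\et(V\otimes_kk',\bar v)$ of prime-to-$p$ index, where $k'=\bark^{\Gamma_p}$, and one then descends the resulting cover to a finite subextension $\ell\subset k'$.

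What each approach buys: the paper's proof is a one-line reduction to profinite Sylow theory and avoids any case analysis; your argument is more explicit and stays within finite-group techniques, producing the finite extension~$\ell$ directly (with $\Gal(\bark/\ell)$ containing an explicit open normal subgroup) rather than via a descent from a possibly infinite $k'$. Both invoke Proposition~\ref{prop:relevement action} in the same way to recognise the cover as a homogeneous space. Your careful bookkeeping on the three prime-to-$p$ indices is exactly the point where the paper's approach economises: once one takes $E_p$ as a single pro-$p$-Sylow of~$\pi$, all three conditions fall out at once.
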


\begin{proof}
Commençons par un lemme de théorie des groupes.

\begin{lem}
\label{lem:sylow}
Soit~$p$ un nombre premier.  Toute suite exacte
\begin{align*}
\xymatrix{
1 \ar[r] & H \ar[r] & E \ar[r] & \Gamma \ar[r] & 1
}
\end{align*}
de groupes profinis, avec~$H$ fini,
s'inscrit dans un diagramme commutatif à lignes exactes
\begin{align*}
\xymatrix@R=2ex{
1 \ar[r] & \vphantom{H_.}\smash[b]{H_p} \ar[r]
\ar@{}[d]|-*{\cap}
& \vphantom{E_.}\smash[b]{E_p} \ar[r]
\ar@{}[d]|-*{\cap}
& \vphantom{\Gamma_.}\smash[b]{\Gammap}
\ar@{}[d]|-*{\cap}\ar[r] & 1 \\
1 \ar[r] & H \ar[r] & E \ar[r] & \Gamma \ar[r] & 1\rlap{\text{,}}
}
\end{align*}
où $H_p$, $E_p$, $\Gammap$ sont des $p$\nobreakdash-Sylow de~$H$, $E$, $\Gamma$
respectivement.
\end{lem}

\begin{proof}
Soit~$E_p$ un $p$\nobreakdash-Sylow de~$E$
(voir \cite[Chapitre~I, \textsection1.4, Proposition~3]{serrecg}).
D'après \emph{loc.\ cit.}, Proposition~4~(b),
l'image~$\Gammap$ de~$E_p$ dans~$\Gamma$
est un $p$\nobreakdash-Sylow de~$\Gamma$.
Posons $H_p=H \cap E_p$
et notons $E_0 \subset E$ l'image réciproque de~$\Gammap$.
En termes de nombres surnaturels (\emph{loc.\ cit.}, \textsection1.3),
l'ordre de~$E_0$, extension de~$\Gammap$ par~$H$,
est le produit d'une puissance surnaturelle de~$p$ et
d'un entier fini premier à~$p$.
L'ensemble $E_0/E_p$ est donc fini d'ordre premier à~$p$.
Comme $H/H_p=E_0/E_p$, il s'ensuit que~$H_p$ est
un $p$\nobreakdash-Sylow de~$H$.
\end{proof}

Notons $H_p\subset \pi_1^\et(V_\bark,\bar v)$, $E_p\subset \pi_1^\et(V,\bar v)$,
$\Gammap \subset \Gal(\bark/k)$
les $p$\nobreakdash-Sylow donnés par le lemme appliqué à la suite exacte~\eqref{se:pi1}.
Notons $k'\subset \bark$ le sous-corps des invariants de~$\Gammap$.
L'image réciproque de $\Gammap=\Gal(\bark/k')\subset\Gal(\bark/k)$ dans~$\pi_1^\et(V,\bar v)$ s'identifie
à $\pi_1^\et(V\otimes_kk',\bar v)$, d'où un diagramme commutatif à lignes exactes
\myxyin
\begin{align*}
\xymatrix@R=2ex{
1 \ar[r] & \vphantom{H_.}\smash[b]{H_p} \ar[r]
\ar@{}[d]|-*{\cap}
& \vphantom{E_.}\smash[b]{E_p} \ar[r]
\ar@{}[d]|-*{\cap}
& \vphantom{\Gamma_.\pi_1}\smash[b]{\Gammap}
\save+<0ex,-.95em>\ar@{=}[d]!(0,3.3)\restore
\ar[r] & 1 \\
1 \ar[r] & \vphantom{\pi_1}\smash[t]{\pi_1^\et}(V_{\bark},\bar v)
\save+<0ex,-.95em>\ar@{=}[d]!(0,3.6)\restore
 \ar[r] & \vphantom{\pi_1}\smash[t]{\pi_1^\et}(V\otimes_kk',\bar v)
\ar@{}[d]|-*{\cap}
 \ar[r] & \Gammap
\ar@{}[d]|-*{\cap}
 \ar[r] & 1\\
1 \ar[r] & \vphantom{\pi_1}\smash[t]{\pi_1^\et}(V_{\bark},\bar v) \ar[r] & \vphantom{\pi_1}\smash[t]{\pi_1^\et}(V,\bar v) \ar[r] & \Gal(\bark/k) \ar[r] & 1\rlap{.}
}
\end{align*}
\myxyout
On voit sur ce diagramme que l'application canonique
$\pi_1^\et(V_\bark,\bar v)/H_p \to \pi_1^\et(V\otimes_kk',\bar v)/E_p$
est une bijection; ainsi~$E_p$ est-il un sous-groupe d'indice fini premier à~$p$
de $\pi_1^\et(V\otimes_kk',\bar v)$.
Il~lui correspond un revêtement étale pointé de $V\otimes_k k'$, géométriquement connexe sur~$k'$
et de degré premier à~$p$.
Choisissons une sous-extension finie $\ell/k$ de~$k'/k$
telle que ce revêtement provienne, par extension des scalaires, d'un revêtement étale $W \to V \otimes_k\ell$
géométriquement connexe sur~$\ell$, muni d'un point $\bar w \in W(\bark)$.
Comme~$\Gammap$ est un $p$\nobreakdash-Sylow de $\Gal(\bark/k)$, le degré de~$\ell$ sur~$k$
est premier à~$p$.
D'après la proposition~\ref{prop:relevement action}, la variété~$W$ est un espace homogène
de $G \otimes_k\ell$ à stabilisateur géométrique fini.
Ce stabilisateur s'identifie à $\pi_1^\et(W\otimes_\ell \bark,\bar w)=H_p$, dont l'ordre
est bien une puissance de~$p$.
\end{proof}

\section{Points rationnels des espaces homogènes}
\label{sec:points rat}

Nous combinons, dans ce paragraphe, le contenu des \textsectiondouble\ref{sec:descente}--\ref{sec:pi1eh}
afin d'en déduire des résultats sur les points rationnels des espaces homogènes de groupes linéaires
semi-simples simplement connexes, à stabilisateur géométrique fini, sur les corps de nombres.

\subsection{Un énoncé conditionnel}
\label{subsec:enonce conditionnel}

Afin de présenter la stratégie générale employée dans tout l'article,
on commence par le théorème conditionnel suivant.
Nous rendrons cette stratégie inconditionnelle dans deux cas: celui où le stabilisateur
géométrique est abélien, retrouvant ainsi un théorème de Borovoi~\cite{borovoi},
et celui où le stabilisateur géométrique est \emph{hyper-résoluble} en tant que groupe fini muni d'une action
extérieure de $\Gal(\bark/k)$.

\begin{thm}
\label{th:pointsrationnelsconjectural}
Soit~$X$ une variété propre, lisse et géométriquement irréductible sur
un corps de nombres~$k$.
Supposons~$X$
birationnellement équivalente à un espace homogène d'un groupe
algébrique linéaire semi-simple simplement connexe, à stabilisateur géométrique
fini résoluble.
Si la conjecture \cite[Conjecture~9.1]{hw} est vraie pour~$k$,
l'ensemble $X(k)$ est dense dans $X(\A_k)^{\Br(X)}$.
\end{thm}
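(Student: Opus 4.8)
The plan is to set up an induction on the order of the (finite, Galois-equivariant) geometric stabilizer~$\Gamma$, using the inductive geometric structure exhibited in \textsection\ref{sec:duntorseuralautre} and \textsection\ref{sec:pi1eh}. The base case is $\Gamma=1$: then a suitable compactification of the homogeneous space is a compactification of~$G$ itself (or of a torsor under~$G$), which is $k$\nobreakdash-rational up to the ambient simply connected semisimple group, so $X$ satisfies weak approximation and \emph{a fortiori} the Brauer--Manin density statement. (More carefully, when $\Gamma=1$ one is looking at a principal homogeneous space under~$G$; since~$G$ is semisimple simply connected, the Hasse principle of Kneser--Harder--Chernousov together with weak approximation for such~$G$ settles it.)

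\textbf{Inductive step.} Suppose $\Gamma\neq 1$ and let~$V$ be an $\SL_n$\nobreakdash-homogeneous space (we may reduce to $G=\SL_n$ by the observation that what matters is the pair $(\Gamma,\text{Galois action})$, or simply keep~$G$ arbitrary) with geometric stabilizer~$\Gamma$; since~$\Gamma$ is solvable, $\Gamma\neq\Gamma'$, so the derived subgroup~$\Gamma'$ has strictly smaller order. By the corollary~\ref{cor:torseurs universels eh}, the universal torseurs of~$V$ are themselves homogeneous spaces of~$G$ with geometric stabilizer~$\Gamma'$, and these again have solvable geometric stabilizer with the induced outer Galois action. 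Fix a compactification~$X$ of~$V$. By the corollary~\ref{cor:torseurs universels eh} applied to~$X$ via \textsection\ref{sec:duntorseuralautre}: every universal torseur~$Y$ of~$X$ contains a dense open~$W$ equipped with a smooth morphism $\pi:W\to Q$ onto a quasi-trivial torus~$Q$, whose fibres are universal torseurs of~$V$, hence homogeneous spaces of~$G$ with geometric stabilizer~$\Gamma'$. Pass to a compactification $\pi':Z\to\P^n_k$ of~$\pi$, so~$Z$ is a compactification of~$Y$ fibred over a quasi-trivial torus with rationally connected generic fibre. By the inductive hypothesis, every fibre $Z_q$ over a rational point~$q$ (indeed over every closed point, but for points rationnels we need the rational fibres) that is a smooth compactification of a form of $\SL_n/\Gamma'$ has $Z_q(k)$ dense in $Z_q(\A_k)^{\Br(Z_q)}$. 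The \emph{key point} is that the fibres of~$\pi$ over rational points are exactly the forms of $\SL_n/\Gamma'$ arising as universal torseurs of~$V$, which have solvable geometric stabilizer $\Gamma'$ with its natural outer Galois action — so the inductive hypothesis applies to each of them, \emph{even though they need not have a rational point and the stabilizer need not be constant}.

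\textbf{Assembling the fibration and descent arguments.} Now apply the conditional fibration theorem~\ref{thm:fibration cas general pointsrat} (Harpaz--Wittenberg, \cite[Corollary~9.25]{hw}): since \cite[Conjecture~9.1]{hw} is assumed for~$k$ and since $Z_q(k)$ is dense in $Z_q(\A_k)^{\Br(Z_q)}$ for all rational~$q$ in a Hilbert subset of~$Q$ (here we may take all of~$Q$), we conclude that $Z(k)$ is dense in $Z(\A_k)^{\Br(Z)}$. This holds for every universal torseur~$Y$ of~$X$, hence for every smooth compactification~$Z$ of every such~$Y$. It remains to descend from the universal torseurs back to~$X$ itself: apply the corollary~\ref{cor:descente} to~$X$ and the universal torseur $f:W\to V$ (where now $V\subset X$ is the relevant dense open with $\bark[V]^*=\bark{}^*$, which we may take to be~$V$ itself since $\Pic(X_\bark)$ is finite for a rationally connected variety). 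The corollary~\ref{cor:descente} tells us that $X(\A_k)^{\Br(X)}$ is contained in the closure of $\bigcup_{[\sigma]\in H^1(k,T)}f^\sigma\big(W^\sigma(\A_k)^{\Br_\nr(W^\sigma)}\big)$; since every~$W^\sigma$ is a homogeneous space with geometric stabilizer~$\Gamma'$ and since its smooth compactifications~$Z^\sigma$ satisfy $Z^\sigma(k)$ dense in $Z^\sigma(\A_k)^{\Br(Z^\sigma)}$, a point in $W^\sigma(\A_k)^{\Br_\nr(W^\sigma)}$ is approximated by a $k$\nobreakdash-point of~$Z^\sigma$, whose image in~$X$ is then a $k$\nobreakdash-point approximating the original adelic point. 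This closes the induction.

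\textbf{The main obstacle} is ensuring that the fibration~$\pi:W\to Q$ constructed from the universal torseur really has rationally connected generic fibre and that its compactification~$Z$ over~$\P^n_k$ is set up so that \cite[Corollary~9.25]{hw} applies — in particular, that the fibres over \emph{rational} points of~$Q$ are precisely the forms of $\SL_n/\Gamma'$ to which induction applies, and that passing to a Hilbert subset of~$Q$ costs nothing. This is exactly what \textsection\ref{sec:duntorseuralautre} (especially the proposition~\ref{prop:comparaisontorseurs}) and \textsection\ref{sec:pi1eh} (the corollary~\ref{cor:torseurs universels eh}) are designed to provide; the remaining subtlety is bookkeeping the Brauer groups through the descent step, which is precisely why the strengthened descent theorem~\ref{th descente}--corollary~\ref{cor:descente}, handling transcendental classes in $\Br_\nr$, is indispensable here — the classical descent \cite{ctsandescent2} would not suffice because $\SL_n/\Gamma'$ can have transcendental unramified Brauer classes by Saltman--Bogomolov.
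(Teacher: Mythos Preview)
Your strategy---induction on $|\Gamma|$, base case via Kneser--Harder--Chernousov, descent to universal torsors, then the fibration theorem~\ref{thm:fibration cas general pointsrat} applied to the $Q$-fibration furnished by \textsection\ref{sec:duntorseuralautre}---is exactly the paper's. The fibration paragraph is fine, and you correctly conclude that $Z(k)$ is dense in $Z(\A_k)^{\Br(Z)}$ for a smooth compactification~$Z$ of \emph{every} universal torsor of~$X$.

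The descent paragraph, however, is muddled and contains a real slip. You write ``apply corollary~\ref{cor:descente} to~$X$ and the universal torseur $f:W\to V$'' and then assert that ``every~$W^\sigma$ is a homogeneous space with geometric stabilizer~$\Gamma'$''. That last sentence identifies~$W$ with a universal torsor of~$V$; but a universal torsor of~$V$ is a torsor under the \emph{finite} group~$R$ dual to $\Pic(V_{\bark})=\Hom(\Gamma^{\ab},\bark{}^*)$, not under a torus, and theorem~\ref{th descente}/corollary~\ref{cor:descente} are stated and proved for tori only. (Relatedly, your parenthetical ``$\Pic(X_{\bark})$ is finite for a rationally connected variety'' is false: $\Pic(X_{\bark})$ is torsion-free of finite rank, which is precisely why the Néron--Severi torus of~$X$ is a genuine torus, whereas $\Pic(V_{\bark})$ is finite.) If descent under~$R$ were available, the whole fibration step would be superfluous and the induction would close in one line---the very fact that it does not is the \emph{raison d'être} of \textsection\ref{sec:duntorseuralautre}.

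The fix is immediate and is what the paper does: apply theorem~\ref{th descente} directly to a universal torsor $f:Y\to X$ of the \emph{proper} variety~$X$, under the torus~$T$ with $\widehat T=\Pic(X_{\bark})$ (such a torsor exists once $X(\A_k)^{\Br(X)}\neq\emptyset$, cf.\ \cite[Proposition~6.1.4]{skobook}, a point you omit). The twists~$Y^\sigma$ are again universal torsors of~$X$; their smooth compactifications are the varieties~$Z$ for which you have already established density via the fibration argument. That closes the induction cleanly.
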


\begin{proof}
Notons~$V$ l'espace homogène et~$G$ le groupe algébrique linéaire semi-simple simplement connexe.
Quitte à remplacer~$X$ par une compactification lisse de~$V$, on peut supposer que~$V$
est un ouvert dense de~$X$ (voir \cite[Proposition~6.1~(iii)]{cps}, \cite[Lemma~1.3]{ctskogoodreduction}).
Fixons $\bar v \in V(\bark)$,
notons
$H_{\bar v} \subset G(\bark)$
son stabilisateur
et prouvons le théorème par récurrence sur l'ordre de~$H_{\bar v}$.
Lorsque $H_{\bar v}$ est trivial,
l'ensemble~$X(k)$ est même dense dans~$X(\A_k)$
(Kneser, Harder, Chernousov; voir \cite[Theorem~6.6, Theorem~7.8]{platonovrapinchuk}).
Pour~$H_{\bar v}$ fini résoluble quelconque,
supposons $X(\A_k)^{\Br(X)}\neq\emptyset$.
La variété~$X$ admet alors un torseur universel
(voir \cite[Proposition~6.1.4]{skobook}).
Le théorème~\ref{th descente} appliqué
à un tel torseur
montre que pour établir la densité de $X(k)$ dans $X(\A_k)^{\Br(X)}$,
il suffit d'établir celle
de~$Z(k)$ dans~$Z(\A_k)^{\Br(Z)}$
pour une compactification lisse~$Z$ de chaque torseur universel de~$X$.
Fixons~$Z$.
D'après le corollaire~\ref{cor:torseurs universels},
il existe
un ouvert dense $Z^0\subset Z$,
un tore quasi-trivial~$Q$ et un morphisme
lisse $f^0:Z^0\to Q$ dont les fibres sont des torseurs universels de~$V$.
Par le corollaire~\ref{cor:torseurs universels eh},
ceux-ci
sont des espaces homogènes de~$G$ à stabilisateur géométrique
fini résoluble d'ordre strictement inférieur à l'ordre de~$H_{\bar v}$, puisque~$H_{\bar v}$ est résoluble.
Grâce à l'hypothèse de récurrence, on peut donc appliquer
le théorème~\ref{thm:fibration cas general pointsrat} à une fibration $f:Z'\to \P^n_k$
compactifiant~$f^0$ et conclure que $Z'(k)$ est dense dans $Z'(\A_k)^{\Br(Z')}$
et donc que $Z(k)$ est dense dans $Z(\A_k)^{\Br(Z)}$.
\end{proof}

\subsection{Stabilisateurs finis abéliens}

Le théorème suivant est dû à Borovoi~\cite{borovoi}.

\begin{thm}
Soit~$X$ une variété propre, lisse et géométriquement irréductible sur
un corps de nombres~$k$.
Supposons~$X$
birationnellement équivalente à un espace homogène d'un groupe
algébrique linéaire semi-simple simplement connexe, à stabilisateur géométrique
fini abélien.
L'ensemble $X(k)$ est dense dans $X(\A_k)^{\Br_1(X)}$.
\end{thm}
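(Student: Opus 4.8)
Le plan est de reprendre la trame de la preuve du théorème~\ref{th:pointsrationnelsconjectural}, qui se simplifie considérablement sous l'hypothèse d'abélianité. Notons~$V$ l'espace homogène et~$G$ le groupe; quitte à remplacer~$X$ par une compactification lisse de~$V$, on peut supposer que~$V$ est un ouvert dense de~$X$, et l'on peut supposer $X(\A_k)^{\Br_1(X)}\neq\emptyset$, faute de quoi l'énoncé est vide. L'observation clé est que, le stabilisateur géométrique~$H_{\bar v}$ étant abélien, son sous-groupe dérivé est trivial, de sorte que les torseurs universels de~$V$ sont, d'après le corollaire~\ref{cor:torseurs universels eh}, exactement les formes de~$G$. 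Or une forme de~$G$ vérifie le principe de Hasse et l'approximation faible (Kneser, Harder, Chernousov; c'est le cas initial de la récurrence dans la preuve du théorème~\ref{th:pointsrationnelsconjectural}), donc toute variété propre, lisse et birationnellement équivalente à une telle forme a ses points rationnels denses dans ses points adéliques. C'est ce fait, inconditionnel, qui remplace à la fois l'hypothèse de récurrence et le recours au théorème de fibration conditionnel~\ref{thm:fibration cas general pointsrat}. Enfin, sous nos hypothèses, tout le groupe de Brauer non ramifié de~$X$ est algébrique, c'est-à-dire $\Br(X)=\Br_1(X)$, d'après Bogomolov, Borovoi, Demarche et Harari (\cite{bogomolov},~\cite{borovoidemarcheharari}); il suffira donc de prouver que $X(k)$ est dense dans $X(\A_k)^{\Br(X)}$.

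Comme~$V$ est unirationnel,~$X$ est rationnellement connexe; comme de plus $X_\bark$ est rationnelle ($\A^n/H_{\bar v}$ est rationnelle sur~$\bark$ pour~$H_{\bar v}$ abélien, d'après Fischer), le groupe $\Pic(X_\bark)$ est libre de type fini, et~$X$ admet un torseur universel $f:Y\to X$ sous un tore~$T$. On appliquera le théorème~\ref{th descente} à~$f$: puisque~$X$ est propre, il fournit l'inclusion $X(\A_k)^{\Br(X)}\subset\bigcup_{[\sigma]\in H^1(k,T)} f^\sigma\big(Y^\sigma(\A_k)^{\Br_\nr(Y^\sigma)}\big)$. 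Pour chaque~$\sigma$,~$f^\sigma:Y^\sigma\to X$ est encore un torseur universel de~$X$; toute compactification lisse de~$Y^\sigma$ est alors $\bark$-birationnellement équivalente à un torseur sous~$T_\bark$ au-dessus de la variété rationnelle $X_\bark$, donc est géométriquement rationnelle, d'où $\Br_\nr(Y^\sigma)=\Br_0(Y^\sigma)$, compte tenu également de \cite[Théorème~2.1.2]{ctsandescent2}. L'inclusion ci-dessus se récrit donc $X(\A_k)^{\Br(X)}\subset\bigcup_{[\sigma]} f^\sigma\big(Y^\sigma(\A_k)\big)$, et il suffit désormais d'établir l'approximation faible pour une compactification lisse de chacune des variétés~$Y^\sigma$.

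Pour ce faire, on fixe, pour chaque~$\sigma$ intervenant, le morphisme lisse $(Y^\sigma)^0\to Q$ vers un tore quasi-trivial~$Q$ fourni par le corollaire~\ref{cor:torseurs universels} (ses fibres sont des torseurs universels de~$V$, donc des formes de~$G$), que l'on compactifie en une fibration $\pi^\sigma:Z^\sigma\to\P^n_k$, avec $Q\subset Q^\aff\subset\P^n_k$,~$Z^\sigma$ compactification lisse de~$Y^\sigma$ et~$f^\sigma$ se prolongeant en un morphisme $g^\sigma:Z^\sigma\to X$. Sa fibre générique, une forme de~$G$, est rationnellement connexe; ses fibres au-dessus des points de codimension~$1$ de~$Q$, étant des formes de~$G$, sont lisses et géométriquement intègres, donc scindées; et ses fibres, après extension des scalaires à~$\bark$, au-dessus des points de codimension~$1$ de $Q^\aff\otimes_k\bark$, sont scindées car $(Y^\sigma)^0\otimes_k\bark\to Q\otimes_k\bark$ admet une section rationnelle --- c'est la proposition~\ref{prop:sectionbark}~(ii) lorsque~$H_{\bar v}$ est cyclique, les compactifications lisses des fibres étant rationnellement connexes, le cas abélien général s'y ramenant (par exemple en se ramenant préalablement à $G=\SL_n$ via~\cite{demarchelucchinireduction}, auquel cas la fibre générique géométrique de~$\pi^\sigma$ est un torseur trivial). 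Le théorème~\ref{thm:fibration presque favorable}~(i) est donc applicable à~$\pi^\sigma$, son hypothèse étant satisfaite puisque les fibres de~$\pi^\sigma$ au-dessus des points rationnels de~$Q$ sont des formes de~$G$; il s'ensuit que $Z^\sigma(k)$ est dense dans $Z^\sigma(\A_k)^{\Br_1(Z^\sigma)}$, donc dans $Z^\sigma(\A_k)$ puisque $\Br_1(Z^\sigma)=\Br_0(Z^\sigma)$ (\cite[Théorème~2.1.2]{ctsandescent2}). Revenant à l'inclusion du paragraphe précédent: étant donné $(x_v)_{v\in\Omega}\in X(\A_k)^{\Br(X)}$, on choisit~$\sigma$ et un relèvement $(y_v)_{v\in\Omega}\in Y^\sigma(\A_k)\subset Z^\sigma(\A_k)$, puis un point rationnel $z\in Z^\sigma(k)$ arbitrairement proche de~$(y_v)_{v\in\Omega}$; alors $g^\sigma(z)\in X(k)$ est arbitrairement proche de~$(x_v)_{v\in\Omega}$.

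Le principal obstacle est l'application inconditionnelle de la méthode des fibrations, c'est-à-dire la vérification que les fibres de $\pi^\sigma\otimes_k\bark$ au-dessus des points de codimension~$1$ de $Q^\aff\otimes_k\bark$ sont scindées. C'est là, et là seulement, que l'abélianité de~$H_{\bar v}$ intervient au-delà de la trivialité de son groupe dérivé, via l'existence d'une section rationnelle géométrique de~$\pi^\sigma$ (proposition~\ref{prop:sectionbark}); le cas des stabilisateurs abéliens non cycliques, où cette proposition ne s'applique pas telle quelle, requiert un argument complémentaire.
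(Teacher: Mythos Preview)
Your strategy — descend to universal torsors, then fibre over a quasi-trivial torus — matches the paper's, but you diverge at the fibration step and the non-cyclic case is not handled.

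The paper, after classical descent (via \cite[Theorem~6.1.1]{skobook}; only~$\Br_1$ is at stake, so the detour through Theorem~\ref{th descente} and the equality $\Br(X)=\Br_1(X)$ is unnecessary), applies Theorem~\ref{th:fibration skohar}~(i). The required splitness of the fibres over \emph{all} codimension~$1$ points of~$Q^\aff$ (over~$k$, not merely over~$\bark$) is supplied by \cite[Théorème~4.2]{ctk}. Alternatively, the paper notes that the elementary Lemma~\ref{lem:fibration cas trivial} already suffices: the smooth fibres are torsors under~$G$, hence satisfy property~$(\star)$ by Kneser--Harder--Chernousov, and~$Q$ is $k$-rational; no splitness check is needed at all. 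Either route is both simpler and complete.

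Your route via Theorem~\ref{thm:fibration presque favorable}~(i) requires splitness over~$\bark$ of the fibres above codimension~$1$ points of $Q^\aff\otimes_k\bark$, which you reduce to a rational section of $\pi^\sigma\otimes_k\bark$. As you acknowledge, Proposition~\ref{prop:sectionbark}~(ii) only provides this when $\widehat R=\Pic(V_\bark)$ has cyclic torsion, i.e., when~$H_{\bar v}$ is cyclic. The proposed fix — reduce to $G=\SL_n$ via \cite{demarchelucchinireduction} — does not work: that reduction transfers a good property from the class of $\SL_n$-homogeneous spaces with \emph{arbitrary} finite stabiliser to all homogeneous spaces; it does not preserve abelianness of the stabiliser and gives no mechanism to replace a specific~$V$ by a stably birational $\SL_n$-homogeneous space. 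What you would actually need is a rational point, over the function field~$\bark(Q)$, on the generic fibre of $\pi^\sigma\otimes_k\bark$ (a $G$-torsor); this amounts to the vanishing of a class in $H^1(\bark(Q),G)$, which holds for~$\SL_n$ by Hilbert~90 but is neither obvious nor generally known for arbitrary semi-simple simply connected~$G$ once $\dim Q\geq 3$.
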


Le point de vue adopté ici le démontre aussi:
il suffit de suivre la preuve du théorème~\ref{th:pointsrationnelsconjectural}
en remplaçant
le théorème~\ref{th descente}
par
\cite[Theorem~6.1.1]{skobook}
et, à la fin, le
théorème~\ref{thm:fibration cas general pointsrat}
par le
théorème~\ref{th:fibration skohar}~(i),
dont l'hypothèse sur les fibres au-dessus des points de codimension~$1$
est satisfaite grâce à
\cite[Théorème~4.2]{ctk}.

Alternativement, on peut éviter le recours à \cite{ctk} ainsi
qu'au théorème~\ref{th:fibration skohar}~(i) en remarquant,
suivant Borovoi~\cite[\textsection3]{borovoi},
qu'une version
 élémentaire de la méthode des fibrations s'applique ici:
en effet, les fibres
lisses de la fibration considérée
sont des torseurs
sous des groupes algébriques linéaires connexes semi-simples simplement connexes,
donc vérifient les hypothèses du lemme ci-dessous
d'après \cite[Theorem~6.4, Theorem~6.6, Theorem~7.8]{platonovrapinchuk}.
Ce lemme nous resservira au~\textsection\ref{sec:zerocycles}.

\begin{lem}
\label{lem:fibration cas trivial}
Soit $f:Z\to B$ un morphisme dominant
entre variétés irréductibles lisses sur un corps de nombres~$k$.
Considérons, pour une variété~$Y$ sur~$k$, la propriété suivante:
\vspace*{1.5pt}
\begin{flushright}
$(\star)\mkern9mu$\begin{minipage}[t]{.92\textwidth}
si $Y(k_v)\neq\emptyset$ pour toute place réelle~$v$,
alors $Y(k)\neq\emptyset$ et $Y$ vérifie l'approximation faible.
\end{minipage}
\end{flushright}
\vspace*{3pt}
Si~$B$ et les fibres de~$f$ au-dessus des points rationnels d'un ouvert dense de~$B$
vérifient~$(\star)$,
alors~$Z$ vérifie~$(\star)$ aussi.
\end{lem}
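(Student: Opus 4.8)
Le plan est d'appliquer la forme élémentaire de la méthode des fibrations, sans récurrence ni groupe de Brauer. Je commencerais par fixer un ouvert dense $U \subset B$ au-dessus duquel~$f$ est lisse (lissité générique, la caractéristique étant nulle) et tel que la fibre $f^{-1}(q)$ vérifie~$(\star)$ pour tout $q \in U(k)$ (il suffit d'intersecter l'ouvert de lissité avec celui que fournit l'hypothèse). Comme~$Z$ est irréductible et~$f$ dominant, $f^{-1}(U)$ est dense dans~$Z$, et son complémentaire est une sous-variété fermée stricte de la variété lisse~$Z$ ; par conséquent $f^{-1}(U)(k_v)$ est dense dans $Z(k_v)$ pour toute place~$v$, les $k_v$-points d'une sous-variété stricte y étant d'intérieur vide.

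Supposons alors $Z(k_v) \neq \emptyset$ pour toute place réelle~$v$, et établissons l'approximation faible (la non-vacuité de~$Z(k)$ en sera un cas particulier). Soit~$S$ un ensemble fini de places et $(P_v)_{v \in S}$ une famille de points de~$Z(k_v)$ à approcher ; quitte à agrandir~$S$ et à y choisir arbitrairement les~$P_v$ aux nouvelles places réelles, on peut supposer que~$S$ contient toutes les places réelles. Je procéderais en trois temps. D'abord, remplacer chaque~$P_v$ par un point $P_v' \in f^{-1}(U)(k_v)$ arbitrairement proche, et poser $q_v = f(P_v') \in U(k_v)$. Ensuite, observer que $B(k_v) \neq \emptyset$ en toute place réelle (image de $Z(k_v)$ par~$f$) et appliquer~$(\star)$ à~$B$ : on obtient $q \in B(k)$ arbitrairement proche des~$q_v$, et quitte à le prendre assez proche, $q$ appartient à l'ouvert~$U$. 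Enfin, utiliser la lissité de~$f$ en~$P_v'$ et le théorème des fonctions implicites sur~$k_v$ pour relever~$q$, lorsqu'il est assez proche de~$q_v$, en un point $\tilde P_v \in f^{-1}(q)(k_v)$ proche de~$P_v'$, donc de~$P_v$. La fibre $f^{-1}(q)$ est une variété lisse sur~$k$ qui vérifie~$(\star)$ et dont l'ensemble des $k_v$-points est non vide en toute place réelle ; il existe donc $P \in f^{-1}(q)(k) \subset Z(k)$ arbitrairement proche des~$\tilde P_v$, donc des~$P_v$.

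Le point que je m'attends à devoir soigner est le dernier : il faut énoncer et appliquer proprement le théorème des fonctions implicites dans le cas non archimédien (par exemple via la structure étale-locale des morphismes lisses et le lemme de Hensel), et garder trace à chaque étape des voisinages en jeu pour que l'approximation finale soit effective. Les énoncés de densité utilisés plus haut relèvent du même cercle d'idées et ne présentent pas de difficulté sérieuse.
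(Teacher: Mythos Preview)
Your proposal is correct and follows essentially the same route as the paper's proof: approximate the images $f(P_v)$ in~$B$ by a rational point~$q$ using~$(\star)$ for~$B$, lift the local points into the fibre over~$q$ via smoothness and the implicit function theorem, then apply~$(\star)$ to that fibre. The only cosmetic difference is that the paper first invokes the birational invariance of~$(\star)$ among smooth varieties to shrink~$Z$ and~$B$ so that~$f$ is everywhere smooth, whereas you work explicitly inside $f^{-1}(U)$ and use density of $f^{-1}(U)(k_v)$ in~$Z(k_v)$; these are equivalent.
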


Notons que si~$Y$ est une variété lisse et $Y^0 \subset Y$ est un ouvert dense,
la propriété~$(\star)$ vaut pour~$Y$ si et seulement si elle vaut pour~$Y^0$,
en vertu du théorème des fonctions implicites.  La propriété~$(\star)$ est donc
un invariant birationnel des variétés lisses sur~$k$.

\begin{proof}[Démonstration du lemme~\ref{lem:fibration cas trivial}]
Compte tenu de la remarque qui précède, on peut, quitte à rétrécir~$Z$ et~$B$, supposer que~$f$
est lisse et que les fibres de~$f$ au-dessus de tous les points rationnels de~$B$ vérifient~$(\star)$.
Supposons que $Z(k_v)\neq\emptyset$ pour toute place~$v$ réelle.
Soit~$S$ un ensemble fini de places de~$k$ contenant les places réelles;
on prend~$S$ égal à l'ensemble des places réelles pour prouver que $Z(k)\neq\emptyset$,
arbitraire pour établir l'approximation faible pour~$Z$.
Pour chaque $v \in S$, soit $P_v \in Z(k_v)$.
D'après~$(\star)$ pour~$B$, il existe $b \in B(k)$ arbitrairement proche des~$f(P_v)$.
Quitte à modifier les~$P_v$, on peut supposer, grâce au théorème des fonctions implicites,
que $f(P_v)=b$ pour $v \in S$.
Par la propriété~$(\star)$ pour~$Z_b$, il s'ensuit que~$Z_b$ (et donc~$Z$) contient un point rationnel
arbitrairement proche des~$P_v$.
\end{proof}

\subsection{Stabilisateurs finis hyper-résolubles}

La notion de groupe hyper-résoluble
(voir \cite[\textsection8.3]{serrereplin}) s'étend aux groupes munis d'une action extérieure
de $\Gal(\bark/k)$:

\begin{defn}
\label{def:hyperresoluble}
Un groupe fini~$H$ muni d'une action extérieure de $\Gal(\bark/k)$
est dit \emph{hyper-résoluble} s'il existe un entier~$m$ et une suite
\begin{align*}
\{1\} = H_0 \subset H_1 \subset \dots \subset H_m = H
\end{align*}
de sous-groupes distingués de~$H$, stables sous l'action extérieure de~$\Gal(\bark/k)$, avec $H_i/H_{i-1}$ cyclique pour
$i\in\{1,\dots,m\}$.  On parle ici de stabilité d'un sous-groupe distingué sous un automorphisme extérieur
pour désigner sa stabilité sous un automorphisme quelconque représentant l'automorphisme extérieur en question;
cette propriété ne dépend pas du représentant choisi puisque le sous-groupe est distingué.
\end{defn}

\begin{exemple}
Lorsque l'action extérieure est triviale, cette définition coïncide
avec la définition classique.  En particulier, 
munis de l'action extérieure triviale,
les groupes nilpotents
et les groupes diédraux sont hyper-résolubles
(voir \cite[\textsection8.3]{serrereplin}).
\end{exemple}

Lorsque le stabilisateur géométrique est hyper-résoluble en tant que groupe fini muni d'une
action extérieure de $\Gal(\bark/k)$ (voir~\textsection\ref{subsec:action exterieure}),
la preuve du théorème~\ref{th:pointsrationnelsconjectural} peut être rendue inconditionnelle
grâce au théorème~\ref{thm:fibration presque favorable}~(ii) et à la proposition~\ref{prop:sectionbark}~(ii).

\begin{thm}
\label{th:66}
Soit~$X$ une variété propre, lisse et géométriquement irréductible sur
un corps de nombres~$k$.
Si~$X$ est
birationnellement équivalente à un espace homogène d'un groupe
algébrique linéaire semi-simple simplement connexe, à stabilisateur géométrique
fini hyper-résoluble (au sens de la définition~\ref{def:hyperresoluble}),
alors $X(k)$ est dense dans $X(\A_k)^{\Br(X)}$.
\end{thm}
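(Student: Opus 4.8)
The plan is to run the inductive scheme of the proof of the theorem~\ref{th:pointsrationnelsconjectural}, with one crucial change: instead of passing in a single step from the geometric stabilizer to its derived subgroup, one descends one cyclic step at a time along a hyper-solvable filtration, which makes the \emph{unconditional} fibration theorem~\ref{thm:fibration presque favorable}~(ii) available in place of the conditional theorem~\ref{thm:fibration cas general pointsrat}. As there, one reduces at once to the case where $V\subset X$ is a dense open subset, $V$ is a homogeneous space of a connected semi-simple simply connected group~$G$ over~$k$, and the geometric stabilizer $H=H_{\bar v}$ is finite and hyper-solvable as a finite group with outer $\Gal(\bark/k)$-action; the proof is by induction on~$|H|$. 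When $H$ is trivial, $V$ is a torsor under~$G$, and if $X(\A_k)^{\Br(X)}\neq\emptyset$ then $V$ has points everywhere locally, so by the vanishing of $\Sha^1(k,G)$ (Kneser, Harder, Chernousov; see \cite[Theorem~6.6, Theorem~7.8]{platonovrapinchuk}) one has $V\simeq G$, which satisfies weak approximation, whence $X(k)$ is dense even in $X(\A_k)$.

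Assume now $H\neq\{1\}$ and $X(\A_k)^{\Br(X)}\neq\emptyset$. Fix a hyper-solvable filtration $\{1\}=H_0\subset\dots\subset H_m=H$ as in the definition~\ref{def:hyperresoluble} with $m$ minimal, and put $C=H/H_{m-1}$: this is a nontrivial finite cyclic group carrying a $\Gal(\bark/k)$-action, and $H'\subset H_{m-1}\subsetneq H$. Set $\Rhat=\Hom(C,\bark{}^*)$ and let $\lambda\colon\Rhat\hookrightarrow\Pic(V_\bark)=\Hom(H^\ab,\bark{}^*)$ be the injection dual to the surjection $H^\ab\twoheadrightarrow C$; feeding $X$, $V$, $\lambda$ into the diagram~\eqref{diag:qtr} produces groups of multiplicative type $R$, $T$, a quasi-trivial torus $Q$, and an injection $\nu\colon\widehat T\hookrightarrow\Pic(X_\bark)$. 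Since $X(\A_k)^{\Br(X)}\neq\emptyset$, the variety~$X$ admits a universal torsor \cite[Proposition~6.1.4]{skobook}, and pushing it forward along the morphism of groups of multiplicative type dual to~$\nu$ yields a torsor $f\colon Y\to X$ under~$T$ of type~$\nu$. Applying the theorem~\ref{th descente} to~$f$ — as in the proof of the theorem~\ref{th:pointsrationnelsconjectural}, in the mildly more general setting of a group of multiplicative type, and noting $A=\Br(X)$ as $X$ is proper — it suffices to prove that $Z(k)$ is dense in $Z(\A_k)^{\Br(Z)}$ for a smooth compactification~$Z$ of each twist $Y^\sigma$; one then lifts a point of $X(\A_k)^{\Br(X)}$ to a point of $Y^\sigma(\A_k)^{\Brnr(Y^\sigma)}\subset Z(\A_k)^{\Br(Z)}$, approximates it by a $k$-point of~$Z$ lying in the dense open subset~$Y^\sigma$, and pushes it down to~$X$ by~$f^\sigma$.

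Fix such a~$Z$, and regard $W^\sigma:=(f^\sigma)^{-1}(V)$ as a dense open subset of~$Z$. By the proposition~\ref{prop:comparaisontorseurs}, after trivialising the induced $Q$-torsor $W^\sigma/R\to V$ one obtains a smooth morphism $\pi^\sigma\colon W^\sigma\to Q$ whose fibres over the points of $Q(k)$ are torsors under~$R$ over~$V$ of type~$\lambda$; by the proposition~\ref{prop:torseurs sont eh} these fibres are homogeneous spaces of~$G$ with geometric stabilizer $\Ker(\widehat\lambda)=H_{m-1}$, carrying a $\Gal(\bark/k)$-outer-action which, being the restriction of the one on~$H$ (here the compatibility clause of the proposition~\ref{prop:torseurs sont eh} is used), is again hyper-solvable; as $H_{m-1}\subsetneq H$, the induction hypothesis applies to their smooth compactifications. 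Choose a smooth compactification $g\colon Z'\to\P^n_k$ of $\pi^\sigma\colon W^\sigma\to Q\subset Q^\aff\subset\P^n_k$; it is birationally equivalent to~$Z$, so it is enough to show $Z'(k)$ dense in $Z'(\A_k)^{\Br(Z')}$, and for this we apply the theorem~\ref{thm:fibration presque favorable}~(ii). Its hypothesis on the fibres of~$g$ over rational points holds, by the induction hypothesis, for~$q$ in a dense open — hence in a hilbertian — subset of~$Q$; the fibres of~$g$ over the codimension-$1$ points of~$Q$ are split, being geometrically integral homogeneous spaces; and the fibres of $g\otimes_k\bark$ over the codimension-$1$ points of $Q^\aff\otimes_k\bark$ are split by the argument of the remark following the theorem~\ref{thm:fibration presque favorable}, because $\pi^\sigma\otimes_k\bark$ (the morphism~$\pi$ of the proposition~\ref{prop:comparaisontorseurs} over~$\bark$, the twist being trivial there, so independent of~$\sigma$) admits a rational section by the proposition~\ref{prop:sectionbark}~(ii) — whose hypotheses hold since the torsion subgroup of~$\Rhat$ is cyclic and the type-$\lambda$ torsor over~$V_\bark$ is a unirational, hence rationally connected, homogeneous space of~$G_\bark$.

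The step where hyper-solvability (rather than mere solvability) is indispensable is precisely this application of the proposition~\ref{prop:sectionbark}~(ii): its requirement that the torsion subgroup of~$\Rhat$ be cyclic is met here only because $C=H/H_{m-1}$ was chosen cyclic, so that $\Rhat=\Hom(C,\bark{}^*)$ is cyclic, whereas descending straight to the derived subgroup~$H'$ would force $\Rhat=\Pic(V_\bark)\simeq H^\ab$, which need not be cyclic — this is exactly why for general solvable~$H$ only the conditional theorem~\ref{thm:fibration cas general pointsrat} is available. The main point requiring care is thus the group-theoretic bookkeeping: checking that $H_{m-1}$ inherits, through the compatibility of outer Galois actions in the proposition~\ref{prop:torseurs sont eh}, a hyper-solvable $\Gal(\bark/k)$-outer-action (so that the induction hypothesis genuinely applies to the fibres), and that $H_{m-1}\subsetneq H$ so the induction strictly decreases; both are immediate from the chosen filtration.
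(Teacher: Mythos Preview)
Your proof follows the paper's own argument essentially line for line: same induction on~$|H|$, same choice of~$\Rhat$ from the top cyclic quotient of the hyper-solvable filtration, same passage to torsors of type~$\nu$ on~$X$, same use of the proposition~\ref{prop:comparaisontorseurs} to fibre over~$Q$, same appeal to the proposition~\ref{prop:sectionbark}~(ii) to get a rational section over~$\bark$, and same application of the theorem~\ref{thm:fibration presque favorable}~(ii) to close the induction.

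There is one small but genuine gap. You describe~$T$ as a ``group of multiplicative type'' and invoke the theorem~\ref{th descente} ``in the mildly more general setting of a group of multiplicative type''. But that theorem is stated and proved only for a \emph{torus}~$T$, and nothing in the paper establishes it for general groups of multiplicative type (several steps of its proof, e.g.\ the birational identification $Y^\sigma_\bark\sim X_\bark\times\P^r_\bark$ used to see $\Br_\nr(Y^\sigma)\subset\Br_1(Y^\sigma/X)$, genuinely use that~$T$ is a torus). The paper's proof instead observes that~$T$ \emph{is} a torus: since~$\lambda$ is injective, so is $\nu\colon\widehat T\hookrightarrow\Pic(X_\bark)$; and as~$X$ is smooth, proper and rationally connected, $\Pic(X_\bark)$ is torsion-free, hence $\widehat T$ is torsion-free and~$T$ is a torus. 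Once you insert this one-line remark, no extension of the theorem~\ref{th descente} is needed and your argument is complete as written.
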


\begin{proof}
Notons~$V$ l'espace homogène et~$G$ le groupe algébrique linéaire semi-simple simplement connexe,
fixons $\bar v \in V(\bark)$
et notons
$H_{\bar v} \subset G(\bark)$ son stabilisateur.
Comme dans la preuve du théorème~\ref{th:pointsrationnelsconjectural},
on peut supposer que~$V$ est un ouvert de~$X$;
comme dans la preuve du théorème~\ref{th:pointsrationnelsconjectural},
le résultat recherché est vrai
lorsque $H_{\bar v}$ est trivial
et l'on procède en général par récurrence
sur l'ordre de~$H_{\bar v}$ pour l'établir.

Soit
$\{1\} = H_0 \subset H_1 \subset \dots \subset H_m = H_{\bar v}$ une suite de sous-groupes
remplissant les conditions de la définition~\ref{def:hyperresoluble},
avec $H_{m-1}\neq H_m$.
L'action extérieure de $\Gal(\bark/k)$ sur~$H_{\bar v}$ induit une action de $\Gal(\bark/k)$
sur le groupe cyclique $H_{\bar v}/H_{m-1}$ et donc aussi sur le groupe cyclique
$\Rhat=\Hom(H_{\bar v}/H_{m-1},\bark{}^*)$.
Rappelons que
$\Pic(V_{\bark})=\Hom(H_{\bar v}^{\ab},\bark{}^*)$
(voir~\textsection\ref{subsec:stab picard}).
Soit $\lambda:\Rhat\hookrightarrow \Pic(V_{\bark})$
l'injection duale de la projection $H_{\bar v}^\ab \twoheadrightarrow H_{\bar v}/H_{m-1}$.

Nous sommes maintenant dans la situation du~\textsection\ref{sec:duntorseuralautre},
dont nous reprenons les notations~$R$, $T$, $Q$, $\nu$ (voir \eqref{diag:qtr}, \eqref{eq:rtq}).
L'application $\nu:\widehat T\to \Pic(X_{\bark})$ est injective puisque~$\lambda$ l'est.
Or $\Pic(X_{\bark})$ est sans torsion;
par conséquent~$\widehat T$ est sans torsion, autrement dit~$T$ est un tore.
Compte tenu qu'un torseur
sur~$X$, sous~$T$, de type~$\nu$ existe dès que $X(\A_k)^{\Br(X)}\neq\emptyset$
(voir \cite[Theorem~6.1.1]{skobook}),
le théorème~\ref{th descente} appliqué à un tel torseur
montre que pour que~$X(k)$ soit dense dans $X(\A_k)^{\Br(X)}$,
il suffit
que $Z(k)$ soit dense dans $Z(\A_k)^{\Br(Z)}$
pour toute compactification lisse~$Z$ d'un torseur sur~$X$, sous~$T$, de type~$\nu$.

D'après la proposition~\ref{prop:comparaisontorseurs},
il existe
un ouvert dense $Z^0\subset Z$
et un morphisme
lisse $f^0:Z^0\to Q$ dont les fibres sont des torseurs sur~$V$, sous~$R$, de type~$\lambda$.
Vu la première partie de la
proposition~\ref{prop:torseurs sont eh}, les fibres de~$f^0$ sont
des espaces
homogènes de~$G$ à stabilisateur géométrique fini.
En particulier, les compactifications lisses de la fibre générique de~$f^0$ sont rationnellement connexes
et comme~$\Rhat$ est cyclique, la proposition~\ref{prop:sectionbark}~(ii) garantit que
$f^0\otimes_k\bark$ admet une section rationnelle.

Il résulte de la seconde partie de la
proposition~\ref{prop:torseurs sont eh}
que les fibres de~$f^0$ au-dessus des points rationnels de~$Q$
sont des espaces homogènes dont le stabilisateur d'un point géométrique au-dessus de~$\bar v$ est
égal, en tant que sous-groupe de~$H_{\bar v}$,
à~$H_{m-1}$.
Ces stabilisateurs viennent avec une action extérieure de $\Gal(\bark/k)$
sous laquelle la suite $H_0 \subset H_1 \subset \cdots \subset H_{m-1}$
de sous-groupes distingués de~$H_{m-1}$ est stable,
en vertu de la troisième partie de la proposition~\ref{prop:torseurs sont eh}
et de la stabilité des~$H_i$ sous l'action extérieure de $\Gal(\bark/k)$
sur~$H_{\bar v}$.
Les fibres de~$f^0$ au-dessus des points rationnels de~$Q$
sont donc des espaces homogènes de~$G$ à stabilisateur géométrique
fini hyper-résoluble
au sens de la définition~\ref{def:hyperresoluble},
d'ordre strictement inférieur à celui de~$H_{\bar v}$.
L'hypothèse de récurrence permet maintenant d'appliquer
le théorème~\ref{thm:fibration presque favorable}~(ii)
à une fibration $f:Z'\to \P^n_k$
compactifiant~$f^0$.
En effet, les fibres de $f\otimes_k \bark$ au-dessus de tous les points de codimension~$1$
sont scindées puisque $f\otimes_k\bark$ admet une section rationnelle.
On conclut que $Z'(k)$ est dense dans $Z'(\A_k)^{\Br(Z')}$
et donc que $Z(k)$ est dense dans $Z(\A_k)^{\Br(Z)}$.
\end{proof}

\section{Zéro-cycles sur les espaces homogènes}
\label{sec:zerocycles}

Nous établissons ici le théorème~\ref{th but}.  Notons dorénavant~$k$ un corps de nombres.

\subsection{Réduction aux stabilisateurs finis}
\label{subsec:reductionstabfini}

Demarche et Lucchini Arteche \cite{demarchelucchinireduction} ont démontré qu'une
\emph{bonne} propriété des espaces homogènes
au sens de la définition rappelée ci-dessous est vérifiée par les espaces homogènes de groupes algébriques
linéaires connexes si elle est vérifiée par les espaces homogènes de~$\SL_n$ à stabilisateur
géométrique fini.

\begin{defn}
\label{def:bonne propriete}
Une propriété~$P$
des espaces homogènes de groupes algébriques linéaires connexes sur~$k$
est \emph{bonne}
si elle remplit les conditions suivantes:
\begin{enumerate}
\item[(i)] si~$V$ et~$W$ sont stablement birationnellement équivalents sur~$k$, alors $P(V) \Leftrightarrow P(W)$;
\item[(ii)] si $V\to W$ est un morphisme admettant une section, alors $P(V)\Rightarrow P(W)$;
\item[(iii)] si $V\to W$ est un morphisme surjectif dont les fibres sont des espaces homogènes de
    groupes algébriques linéaires connexes semi-simples simplement connexes à stabilisateur géométrique
extension d'un groupe algébrique linéaire connexe semi-simple par un groupe algébrique linéaire connexe unipotent,
alors $P(W) \Rightarrow P(V)$.
\end{enumerate}
Ci-dessus,
les symboles~$V$ et~$W$ désignent des espaces homogènes de groupes algébriques linéaires connexes sur~$k$
et le terme \og{}morphisme\fg{} signifie \og{}morphisme de variétés\fg{}.
\end{defn}

Nous ne savons pas si la validité de la conjecture~$(E)$ pour les compactifications lisses d'un espace homogène
est une bonne propriété des espaces homogènes sur~$k$.
La variante suivante de la conjecture~$(E)$
se comporte mieux vis-à-vis de la définition~\ref{def:bonne propriete}.

\begin{defn}
\label{def:eplus}
Une variété~$X$ irréductible, propre et lisse sur~$k$ \emph{vérifie la propriété~$(E^+)$} si
toute variété~$Z$ irréductible, propre et lisse sur~$k$ munie d'un morphisme dominant $Z \to X$
remplissant les deux conditions suivantes
vérifie la conjecture~$(E)$:
\begin{enumerate}
\item[(i)]
la fibre générique de ce morphisme est rationnellement connexe;
\item[(ii)]
il existe un ouvert dense $X^0 \subset X$ tel que
pour toute extension finie~$k'/k$ et tout $x \in X^0(k')$,
la fibre~$Z_x$
vérifie la condition~$(\star)$ du lemme~\ref{lem:fibration cas trivial}
sur le corps de nombres~$k'$.
\end{enumerate}
\end{defn}

\begin{prop}
\label{prop:eplus bonne propriete}
La propriété \og{}toute variété propre, lisse
et birationnellement équivalente à~$V$ vérifie
la propriété~$(E^+)$\fg{} est une bonne propriété des espaces homogènes de groupes algébriques linéaires
connexes~$V$ sur~$k$ au sens de la définition~\ref{def:bonne propriete}.
\end{prop}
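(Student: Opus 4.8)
\emph{Proposition de démonstration.} Le plan est de vérifier, pour la propriété~$P$ de l'énoncé, les trois conditions de la définition~\ref{def:bonne propriete}. On commence par observer que~$(E^+)$ est un invariant birationnel des variétés propres et lisses : si $X$ et $X'$ sont propres, lisses, géométriquement irréductibles et birationnellement équivalentes sur~$k$ et si~$(E^+)$ vaut pour~$X$, alors, partant d'un morphisme dominant $Z\to X'$ remplissant les conditions~(i) et~(ii) de la définition~\ref{def:eplus}, on le compose avec la restriction à un ouvert dense de $X'\dashrightarrow X$, puis on compactifie (fait rappelé dans les \emph{Notations}), pour obtenir une variété~$\widehat Z$ propre et lisse, birationnellement équivalente à~$Z$, et un morphisme dominant $\widehat Z\to X$ ; les conditions~(i) et~(ii) persistent pour ce morphisme, la connexité rationnelle de la fibre générique et la propriété~$(\star)$ des fibres étant des invariants birationnels (pour~$(\star)$, voir la remarque suivant le lemme~\ref{lem:fibration cas trivial}) et un ouvert dense de~$X$ correspondant à un ouvert dense de~$X'$ ; comme la validité de la conjecture~$(E)$ est elle aussi un invariant birationnel des variétés propres et lisses (les groupes $\CH_0(X_{k_v})$, $\CHzhat(X)$, $\CHzAhat(X)$, $\Br(X)$ et l'accouplement de~\eqref{se:egeneral} le sont), on en tire~$(E)$ pour~$Z$. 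Dès lors~$P(V)$ équivaut à ce que~$(E^+)$ vaille pour un, de façon équivalente pour tout, modèle propre et lisse de~$V$, et il ne reste qu'à vérifier~(i), (ii), (iii).

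\emph{Condition~(i).} Vu l'invariance birationnelle de~$(E^+)$, il suffit, pour l'équivalence birationnelle stable, de montrer que~$(E^+)$ vaut pour une variété propre et lisse~$X$ si et seulement si elle vaut pour~$X\times_k\P^n_k$. Le sens \og{}seulement si\fg{} résulte de ce que les complexes~\eqref{se:egeneral} de~$Z$ et de~$Z\times_k\P^n_k$ sont canoniquement isomorphes (formule du fibré projectif pour~$\CH_0$ et pour le groupe de Brauer), appliqué aux fibrations $Z\times_k\P^n_k\to X\times_k\P^n_k$ déduites par produit avec~$\P^n_k$. Dans l'autre sens, étant donné $Z\to X\times_k\P^n_k$ dominant remplissant~(i) et~(ii), on considère le composé $h:Z\to X$ avec la première projection : sa fibre générique est fibrée au-dessus de~$\P^n_{k(X)}$ en variétés rationnellement connexes, donc est rationnellement connexe (\cite[Theorem~1.1]{ghs} et \cite[Chapter~IV, Theorem~6.10]{kollarbook}) ; et un argument de dimension fournit un ouvert dense $X^0\subset X$ tel que pour tout~$x\in X^0$ la fibre~$\{x\}\times\P^n_k$ ne soit pas contenue dans le fermé complémentaire de l'ouvert dense de~$X\times_k\P^n_k$ donné par~(ii), de sorte que le lemme~\ref{lem:fibration cas trivial} appliqué à $h^{-1}(x)\to\P^n_{k'}$ (base vérifiant~$(\star)$, fibres en les points rationnels d'un ouvert dense vérifiant~$(\star)$) montre que~$h^{-1}(x)$ vérifie~$(\star)$ ; les conditions~(i) et~(ii) valant pour~$h$, la propriété~$(E^+)$ de~$X$ donne~$(E)$ pour~$Z$.

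\emph{Conditions~(ii) et~(iii).} On fixe des modèles propres et lisses~$X$ de~$V$ et~$X'$ de~$W$. Pour~(ii), notant~$s$ une section de $V\to W$ et se donnant $Z\to X'$ dominant remplissant~(i) et~(ii), on pose $Z'=Z_W\times_{W}V$, où $Z_W\subset Z$ est l'image réciproque de~$W$ et le produit fibré est pris via $V\to W$ : alors $Z'\to V$ est dominant de fibre générique rationnellement connexe, $Z'\to Z_W$ admet la section induite par~$s$, et après compactification de~$Z'$ en~$\widehat Z$ propre, lisse, muni d'un morphisme dominant $\widehat Z\to X$, la propriété~$(E^+)$ de~$X$ — dont les hypothèses~(i) et~(ii) se vérifient en prenant l'ouvert dense de~$X$ à l'intérieur de l'image réciproque, par $V\to W$, de l'ouvert dense fourni par~(ii) pour $Z\to X'$ — donne~$(E)$ pour~$\widehat Z$, donc pour~$Z$ : en effet $\widehat Z\dashrightarrow Z$ possède une section rationnelle, si bien que $\CH_0(Z)$, $\Br(Z)$ et leurs variantes adéliques sont, compatiblement aux morphismes en jeu, facteurs directs de ceux de~$\widehat Z$, et l'exactitude du complexe~\eqref{se:egeneral} passe aux facteurs directs. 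Pour~(iii), se donnant $Z\to X$ dominant remplissant~(i) et~(ii), on restreint $Z\to X$ au-dessus de~$V$, on compactifie et on utilise le morphisme $V\to W$ pour obtenir $\widehat Z\to X'$ propre, lisse, dominant, birationnellement équivalent à~$Z$ ; sa fibre générique est fibrée au-dessus de la fibre générique de~$V\to W$ (espace homogène, donc rationnellement connexe), et sa fibre au-dessus de~$w\in X'(k')$ (pour un ouvert dense convenable de~$X'$) est, à équivalence birationnelle près, une variété fibrée au-dessus de la fibre~$V_w$ de~$V\to W$, de fibre générique rationnellement connexe et dont les fibres en les points rationnels d'un ouvert dense de~$V_w$ vérifient~$(\star)$ d'après~(ii) pour $Z\to X$ ; comme~$V_w$ vérifie lui-même~$(\star)$ (Kneser, Harder, Chernousov, Borovoi : les espaces homogènes de groupes semi-simples simplement connexes dont le stabilisateur géométrique est extension d'un groupe connexe semi-simple par un groupe connexe unipotent vérifient le principe de Hasse et l'approximation faible à l'obstruction aux places réelles près ; voir \cite{platonovrapinchuk}, \cite{borovoi}), le lemme~\ref{lem:fibration cas trivial} montre que cette fibre vérifie~$(\star)$, d'où les conditions~(i) et~(ii) pour $\widehat Z\to X'$ et, par~$(E^+)$ pour~$X'$, la conjecture~$(E)$ pour~$Z$.

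Le principal obstacle est le contrôle, à travers ces produits fibrés, compactifications et restrictions à des ouverts, de la persistance de l'ouvert dense de la condition~(ii) de la définition~\ref{def:eplus} : chaque étape requiert un petit argument de dimension assurant que le \og{}mauvais lieu\fg{} a une image non dense dans la base, ce qui repose sur l'équidimensionalité générique des fibrations en présence et sur le caractère birationnel de~$(\star)$. Le seul ingrédient non formel est, dans la vérification de~(iii), le résultat classique de principe de Hasse et d'approximation faible rappelé ci-dessus pour les fibres de~$V\to W$.
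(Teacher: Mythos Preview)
Your argument is correct and follows the same strategy as the paper, which packages the three verifications into separate lemmas---stable birational invariance of~$(E^+)$ (lemme~\ref{lem:eplus invariant birationnel stable}), transfer of~$(E^+)$ down along a morphism with rational section (lemme~\ref{lem:e avec section se transmet}), and transfer of~$(E^+)$ up along a morphism satisfying conditions~(i)--(ii) of the définition~\ref{def:eplus} (lemme~\ref{lem:eplus se releve})---and invokes \cite[Proposition~3.4]{borovoi} to see that the morphism $V\to W$ of condition~(iii) satisfies those conditions. One cosmetic slip: in your treatment of condition~(i), the two implications are labelled backwards---your product construction $Z\times_k\P^n_k\to X\times_k\P^n_k$ actually proves that $(E^+)$ for $X\times_k\P^n_k$ implies $(E^+)$ for~$X$ (given a test $Z\to X$, one tests $X\times_k\P^n_k$ with $Z\times_k\P^n_k$), while your composition argument $h:Z\to X$ proves the converse; both arguments are correct, only the tags \og{}seulement si\fg{} and \og{}dans l'autre sens\fg{} need swapping.
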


\begin{proof}
La condition~(i) de la définition~\ref{def:bonne propriete} résulte du lemme~\ref{lem:eplus invariant birationnel stable}
ci-dessous et la condition~(ii) résulte du lemme~\ref{lem:e avec section se transmet}~(ii).
La condition~(iii) vient quant à elle du lemme~\ref{lem:eplus se releve},
compte tenu que d'après~\cite[Proposition~3.4]{borovoi},
tout morphisme $Y \to X$ entre variétés irréductibles, propres et lisses sur~$k$
compactifiant le morphisme $V\to W$ qui apparaît dans~(iii) satisfait aux conditions~(i) et~(ii)
de la définition~\ref{def:eplus}.
\end{proof}

\begin{lem}
\label{lem:eplus se releve}
Soit $Y \to X$ un morphisme dominant entre variétés irréductibles, propres et lisses sur~$k$,
remplissant les conditions~(i) et~(ii) de la définition~\ref{def:eplus}.
Si la propriété~$(E^+)$ vaut pour~$X$, elle vaut pour~$Y$ aussi.
\end{lem}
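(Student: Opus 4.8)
The plan is to deduce property~$(E^+)$ for~$Y$ from property~$(E^+)$ for~$X$ by composing with the given morphism $p\colon Y\to X$. Concretely, let $q\colon Z\to Y$ be a dominant morphism with~$Z$ irreducible, proper and smooth over~$k$, satisfying conditions~(i) and~(ii) of Definition~\ref{def:eplus} relative to~$Y$; we must show that~$Z$ satisfies conjecture~$(E)$. Consider the composite $h=p\circ q\colon Z\to X$. I would check that~$h$ again satisfies conditions~(i) and~(ii) of Definition~\ref{def:eplus}, this time relative to~$X$; property~$(E^+)$ for~$X$ then yields that~$Z$ satisfies conjecture~$(E)$, and since~$q$ was arbitrary this establishes~$(E^+)$ for~$Y$. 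Note at the outset that, $Z$ and~$Y$ being proper over~$k$ and~$q$ being dominant, the morphism~$q$ — hence also~$h$ — is proper and surjective; moreover $Z_x=Z\times_Y Y_x$ for every point~$x$ of~$X$, so the fibre of~$h$ over~$x$ sits in a surjective morphism onto the fibre of~$p$ over~$x$.

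Condition~(i) for~$h$, the rational connectedness of its geometric generic fibre, follows from the transitivity of rational connectedness in towers of fibrations: the geometric generic fibre of~$p$ is rationally connected (condition~(i) for~$p$), and over it the morphism induced by~$q$ has rationally connected general fibres (condition~(i) for~$q$, spread out), so the total space — which is the geometric generic fibre of~$h$ — is rationally connected by the Graber--Harris--Starr theorem \cite[Theorem~1.1]{ghs} combined with \cite[Chapter~IV, Theorem~6.10]{kollarbook}.

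For condition~(ii), choose a dense open $X^0\subset X$ as in condition~(ii) for~$p$ (so $Y_x$ satisfies~$(\star)$ over~$k'$ for every finite $k'/k$ and every $x\in X^0(k')$) and a dense open $Y^0\subset Y$ as in condition~(ii) for~$q$ (so $Z_y$ satisfies~$(\star)$ over~$k''$ for every finite $k''/k$ and every $y\in Y^0(k'')$). By generic smoothness in characteristic zero and standard constructibility arguments, together with the fact that the geometric generic fibres of~$p$ and of~$h$ are geometrically integral (the latter by step~(i) above), there is a dense open $U\subset X$ over which the fibres~$Y_x$ of~$p$ and~$Z_x$ of~$h$ are smooth and geometrically integral; and since $Y\setminus Y^0$ is closed in~$Y$ of dimension~$<\dim Y$, a dimension count on its image in~$X$ via the fibre-dimension theorem shows that, after shrinking~$U$, the open set $Y^0\cap Y_x$ is dense in~$Y_x$ for every $x\in U$. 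Set $X^{00}=X^0\cap U$. Fix a finite extension $k'/k$ and $x\in X^{00}(k')$. Then~$q$ base-changes to a surjective morphism $Z_x\to Y_x$ of irreducible smooth varieties over~$k'$; the base~$Y_x$ satisfies~$(\star)$ over~$k'$ because $x\in X^0(k')$; and the fibre of $Z_x\to Y_x$ over any $y\in (Y^0\cap Y_x)(k')$ is~$Z_y$, which satisfies~$(\star)$ over~$k'$ because $y\in Y^0(k')$. Lemma~\ref{lem:fibration cas trivial}, applied over~$k'$ to $Z_x\to Y_x$ and the dense open $Y^0\cap Y_x\subset Y_x$, then gives that~$Z_x$ satisfies~$(\star)$ over~$k'$. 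Thus~$h$ satisfies condition~(ii) relative to~$X$ with the dense open~$X^{00}$, which finishes the argument.

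The main technical point is this last step: one must arrange simultaneously that the fibres~$Y_x$ be irreducible and smooth (so that Lemma~\ref{lem:fibration cas trivial} applies) and that the ``bad locus'' $Y\setminus Y^0$ of~$q$ meet each such fibre in codimension~$\geq1$ (so that $Y^0\cap Y_x$ remains dense). Both are obtained by semicontinuity and constructibility together with the fibre-dimension theorem; the rest is a formal unwinding of Definition~\ref{def:eplus} and of the fact that~$(\star)$ propagates in fibrations.
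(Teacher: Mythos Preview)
Your proposal is correct and follows exactly the approach of the paper: the paper's proof is a single sentence stating that conditions~(i) and~(ii) of Definition~\ref{def:eplus} are stable under composition of dominant morphisms, by Graber--Harris--Starr and Lemma~\ref{lem:fibration cas trivial}. You have simply unpacked this, supplying the constructibility and fibre-dimension arguments needed to make the invocation of Lemma~\ref{lem:fibration cas trivial} precise.
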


\begin{proof}
En effet, les conditions~(i) et~(ii) de la définition~\ref{def:eplus} sont
stables par composition de morphismes dominants,
au vu du théorème de Graber--Harris--Starr \cite[Theorem~1.1]{ghs}
et du lemme~\ref{lem:fibration cas trivial}.
\end{proof}

\begin{lem}
\label{lem:eplus invariant birationnel stable}
La propriété~$(E^+)$ est un invariant birationnel stable des variétés irréductibles, propres et lisses sur~$k$.
\end{lem}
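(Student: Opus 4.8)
The plan is to reduce stable birational invariance of $(E^+)$ to two facts: (a) $(E^+)$ is a birational invariant of smooth, proper, irreducible $k$-varieties; and (b) $(E^+)$ is unchanged under $X\mapsto X\times_k\P^n_k$. Granting these, if $X$ and $X'$ are stably birationally equivalent then $X\times_k\P^m_k$ is birational to $X'\times_k\P^n_k$ for suitable $m,n$, whence $(E^+)(X)\Leftrightarrow(E^+)(X\times_k\P^m_k)\Leftrightarrow(E^+)(X'\times_k\P^n_k)\Leftrightarrow(E^+)(X')$. The engine for both (a) and (b) will be Lemma~\ref{lem:eplus se releve}: $(E^+)$ is inherited along any dominant morphism of smooth proper irreducible $k$-varieties satisfying conditions~(i) and~(ii) of Definition~\ref{def:eplus}. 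I would first record that three kinds of morphism are of this type — a projection $Y\times_k\P^n_k\to Y$; the morphism $Y\times_k\P^n_k\to Z\times_k\P^n_k$ obtained from such a morphism $Y\to Z$ by product with $\mathrm{Id}_{\P^n_k}$; and any birational morphism of smooth proper irreducible $k$-varieties — since in each case the generic fibre (a projective space, a base-change of a rationally connected variety, or a point) is rationally connected, and the fibres over a suitable dense open (over a finite extension of $k$) are projective spaces, fibres of $Y\to Z$, or single $k'$-points, all satisfying $(\star)$. By Lemma~\ref{lem:eplus se releve} this gives the easy implications $(E^+)(X)\Rightarrow(E^+)(X\times_k\P^n_k)$ and $(E^+)(X)\Rightarrow(E^+)(W)$ for any birational morphism $W\to X$ (and symmetrically). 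Since a birational equivalence of smooth proper irreducible varieties factors, via a common smooth proper resolution, as the inverse of a birational morphism followed by a birational morphism, it then remains to prove the converse statements: $(\mathrm a')$ if $p\colon W\to X$ is a birational morphism of smooth proper irreducible varieties and $(E^+)$ holds for $W$, then it holds for $X$; and $(\mathrm b')$ if $(E^+)$ holds for $X\times_k\P^n_k$, then it holds for $X$.

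For $(\mathrm b')$: given a dominant $f\colon Z'\to X$ with $Z'$ smooth proper irreducible satisfying~(i),(ii), the morphism $f\times\mathrm{Id}_{\P^n_k}$ is of the good type above, so $(E^+)(X\times_k\P^n_k)$ yields $(E^+)(Z'\times_k\P^n_k)$, hence (applying Definition~\ref{def:eplus} to the identity morphism of $Z'\times_k\P^n_k$) conjecture $(E)$ for $Z'\times_k\P^n_k$. One then concludes by observing that $\CH_0$ of $Z'$ and of $Z'\times_k\P^n_k$ agree, globally and over every completion, that $\Br(Z'\times_k\P^n_k)=\Br(Z')$, and that these identifications are compatible with the pairings, so that the complex~\eqref{se:egeneral} for $Z'\times_k\P^n_k$ coincides with the one for $Z'$; thus $(E)$ holds for $Z'$.

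For $(\mathrm a')$: starting from a dominant $f\colon Z'\to X$ with $Z'$ smooth proper irreducible satisfying~(i),(ii) for a dense open $X^0\subset X$, the goal is to build a good-type morphism $g\colon Z\to W$ with $Z$ birational to $Z'$, so that $(E^+)(W)$ gives $(E)$ for $Z$ and hence for $Z'$. I would take for $Z$ a smooth proper resolution of the component $V$ of $Z'\times_X W$ dominating $Z'$ (an integral $k$-variety, proper over $k$, birational to $Z'$), with $g$ induced by the second projection $V\to W$. Checking~(i) for $g$ should be routine: $p$ birational gives $W\times_X\Spec(k(X))=\Spec(k(W))$, so the generic fibre of $g$ is that of $Z\to V\to Z'\xrightarrow{f}X$, a proper $k(X)$-variety birational to the rationally connected generic fibre of $f$, hence rationally connected. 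The step I expect to be the real obstacle is condition~(ii): passing from $Z'\times_X W$ to a smooth model can alter the fibres over a codimension-$\geq1$ locus of $W$, whereas~(ii) must be checked at all $k'$-points of a dense open of $W$. I would circumvent this by first shrinking $X^0$ — harmless, as~(ii) passes to dense open subsets — so that $X^0$ lies in the isomorphism locus of $p$ and $f$ is smooth over $X^0$; then, setting $W^0:=p^{-1}(X^0)$, which $p$ carries isomorphically onto $X^0$, the piece $V\times_W W^0\simeq f^{-1}(X^0)$ is smooth over $k$, so the resolution $\mu\colon Z\to V$ may be chosen to be an isomorphism over the smooth locus of $V$, in particular over $V\times_W W^0$. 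Then for every finite $k'/k$ and every $w\in W^0(k')$ one has $g^{-1}(w)\simeq(V\times_W W^0)_w\simeq f^{-1}(p(w))$, which satisfies $(\star)$ over $k'$ because $p(w)\in X^0(k')$; so $g$ satisfies~(ii) relative to $W^0$, and applying $(E^+)(W)$ to $g$ completes the proof.
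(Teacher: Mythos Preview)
Your argument is correct. The treatment of stability under $X\mapsto X\times_k\P^n_k$ is essentially the same as in the paper (which works with $\P^1_k$ and invokes the stable invariance of $\CH_0$ and of the Brauer group in the same way). For birational invariance, however, the paper proceeds more directly and avoids the fibre-product construction you use in $(\mathrm a')$: given a birational map $Y\dashrightarrow X$ and a test morphism $Z\to Y$ satisfying (i) and (ii), one simply resolves the indeterminacy of the composite $Z\dashrightarrow X$ to obtain a birational morphism $Z'\to Z$ together with a morphism $Z'\to X$; since $k(X)=k(Y)$, the fibres of $Z'\to X$ over a dense open are birational to those of $Z\to Y$, so (i) and (ii) hold for $Z'\to X$ by the birational invariance of rational connectedness and of~$(\star)$; then $(E^+)$ for~$X$ gives $(E)$ for~$Z'$, hence for~$Z$ by birational invariance of~$(E)$. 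This establishes $(E^+)(X)\Rightarrow(E^+)(Y)$ in one stroke, without having to isolate a dominant component of a fibre product, control the smooth locus of that component, or arrange the resolution to be an isomorphism over a prescribed open. Your route via Lemma~\ref{lem:eplus se releve} and the fibre product $Z'\times_X W$ works, and has the virtue of paralleling the constructions in Lemma~\ref{lem:e avec section se transmet}(ii) and Proposition~\ref{prop:eplus en p}; the paper's route is shorter because resolving one rational map replaces the pair ``form fibre product, then desingularise''.
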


\begin{proof}
Soient~$X$ et~$Y$ deux variétés irréductibles, propres et lisses sur~$k$
et $Y \dashrightarrow X$
une application birationnelle.
Si~$Z$ est une variété irréductible, propre et lisse sur~$k$ munie d'un morphisme dominant $Z \to Y$
remplissant les conditions~(i) et~(ii) de
la définition~\ref{def:eplus}, résolvons, avec Hironaka~\cite{hironaka},
l'indétermination de l'application composée $Z \dashrightarrow X$. On obtient ainsi une variété~$Z'$ irréductible, propre et lisse
sur~$k$ et
un morphisme birationnel $Z' \to Z$ tel que l'application rationnelle composée $Z' \dashrightarrow X$
soit un morphisme.
Les fibres génériques de $Z\to Y$ et de $Z' \to X$ étant birationnellement équivalentes,
l'invariance birationnelle de la propriété~$(\star)$ parmi les variétés lisses montre que le morphisme $Z' \to X$ remplit
lui aussi les conditions~(i) et~(ii) de la définition~\ref{def:eplus}.
La propriété~$(E^+)$ pour~$X$ implique donc la propriété~$(E)$ pour~$Z'$,
qui implique à son tour la propriété~$(E)$ pour~$Z$, par invariance birationnelle de~$(E)$
(voir \cite[Remarque~1.1~(vi)]{wittdmj}),
d'où, en faisant varier~$Z$, la propriété~$(E^+)$ pour~$Y$.
Il s'ensuit
que~$(E^+)$ est un invariant birationnel.

Vérifions maintenant que~$(E^+)$ pour~$X$ équivaut à~$(E^+)$ pour $X \times_k \P^1_k$.
Si~$X$ satisfait~$(E^+)$, alors $X \times_k \P^1_k$ aussi, par le lemme~\ref{lem:eplus se releve}.
Réciproquement, si $X\times_k \P^1_k$ satisfait~$(E^+)$ et si $Z \to X$ est
comme dans la définition~\ref{def:eplus}, le morphisme
$Z \times_k \P^1_k\to X \times_k \P^1_k$ vérifie les conditions~(i) et~(ii) de cette
définition, donc la conjecture~$(E)$ vaut pour $Z \times_k \P^1_k$, donc
pour~$Z$ puisque le groupe de Brauer et le groupe de Chow des zéro-cycles
sont des invariants stables.
\end{proof}

\begin{lem}
\label{lem:e avec section se transmet}
Soit $f:Y \to X$ un morphisme entre variétés irréductibles, propres et lisses sur~$k$.
Supposons que~$f$ admette une section rationnelle.
\begin{enumerate}
\item[(i)]
Si la propriété~$(E)$ vaut pour~$Y$, elle vaut pour~$X$ aussi.
\item[(ii)]
Si la propriété~$(E^+)$ vaut pour~$Y$, elle vaut pour~$X$ aussi.
\end{enumerate}
\end{lem}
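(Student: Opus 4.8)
I would prove the two assertions together, part~(i) being the engine for part~(ii).

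For~(i) the plan is to turn the rational section into an honest morphism after blowing up the base. Writing $s\colon X\dashrightarrow Y$ for a rational section of~$f$, I would resolve, with \cite{hironaka}, the singularities of the closure of its graph to get a smooth, proper, irreducible $k$\nobreakdash-variety~$X'$, a birational morphism $p\colon X'\to X$ and a morphism $g\colon X'\to Y$ such that $f\circ g=p$. The point is then that proper pushforward of zero-cycles — compatible with extension of scalars, with the passage to the groups $\CHzhat$ and $\CHzAhat$, and, via the projection formula, with the pairings with local invariants — makes~$g$ and~$f$ induce morphisms from the complex~\eqref{se:egeneral} attached to~$X'$ to the one attached to~$Y$ and from the one attached to~$Y$ to the one attached to~$X$, whose composite is the morphism induced by~$p$. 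Since~$p$ is a birational morphism of smooth proper varieties, $p_*$ is an isomorphism on~$\CH_0$ and on each of its variants and $p^*$ an isomorphism on~$\Br$ (cf.\ \cite{wittdmj}), so this composite is an isomorphism of complexes; hence the morphism induced by~$g$ is a split monomorphism, exhibiting the complex~\eqref{se:egeneral} attached to~$X'$ as a direct factor of the one attached to~$Y$. As a direct factor of an exact complex is exact, $(E)$ for~$Y$ gives $(E)$ for~$X'$, hence for~$X$.

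For~(ii), assuming $(E^+)$ for~$Y$, I would take an arbitrary dominant morphism $h\colon Z\to X$ from a smooth proper irreducible $k$\nobreakdash-variety satisfying conditions~(i) and~(ii) of the définition~\ref{def:eplus} and deduce~$(E)$ for~$Z$ by transporting the problem to~$Y$ through the fibre product. First I would note that the generic fibre of~$h$ is a smooth proper rationally connected $k(X)$\nobreakdash-variety, hence geometrically integral, and that the generic fibre of~$f$ is smooth over~$k(X)$ (generic smoothness, the characteristic being zero) and has a $k(X)$\nobreakdash-point induced by~$s$, hence is geometrically integral too; therefore the fibres of $Z\times_X Y$ above~$\eta_X$ and above~$\eta_Z$ are integral, and $Z\times_X Y$ has a unique irreducible component~$W_0$ dominating~$X$, which moreover dominates~$Z$. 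Applying \cite{hironaka} and the compactification fact recalled in the introduction to a smooth dense open of~$W_0$ together with its morphism to~$Z\times_k Y$ given by the two projections, I would obtain a smooth proper irreducible $k$\nobreakdash-variety~$W$, birational to~$W_0$, carrying morphisms $\phi\colon W\to Y$ and $\psi\colon W\to Z$. I would then check that~$\phi$ is dominant with rationally connected generic fibre (birational to $Z_{\eta_X}\otimes_{k(X)}k(Y)$) and that a suitable dense open $Y^0\subset Y$, contained in $f^{-1}(X^0)$ (with $X^0$ as in condition~(ii) for~$h$) and small enough that $W_y\simeq Z_{f(y)}$ for $y\in Y^0$, makes the fibres $W_y$ inherit~$(\star)$ from those of~$h$; thus $(E^+)$ for~$Y$ yields~$(E)$ for~$W$. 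On the other hand~$\psi$ admits a rational section, namely the one induced by $z\mapsto(z,s(h(z)))$: the image of this rational map into $Z\times_X Y$ lies in the fibre of $Z\times_X Y\to Z$ above~$\eta_Z$, which is integral and hence contained in~$W_0$. Part~(i) applied to~$\psi$ then gives~$(E)$ for~$Z$; since~$Z$ was arbitrary, this proves $(E^+)$ for~$X$.

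The only non-formal ingredients, and hence the main obstacles, will be: in~(i), the standard fact that a birational morphism between smooth proper varieties induces isomorphisms on the (completed, local, and norm-quotient) Chow groups of zero-cycles and on the Brauer group; and, in~(ii), the integrality of the relevant generic fibres of $Z\times_X Y$, which rests on the geometric integrality of the generic fibre of~$f$ — a by-product of the existence of the rational section together with generic smoothness in characteristic zero — and the bookkeeping showing that~$\phi$ inherits condition~(ii) of the définition~\ref{def:eplus}.
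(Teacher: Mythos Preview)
Your proof is correct and follows essentially the same route as the paper: in~(i) the paper also resolves the rational section via Hironaka and uses that the induced map between complexes~\eqref{se:egeneral} factors through an isomorphism, phrasing the conclusion as surjectivity of~$f_*$ on the middle cohomology group rather than as a splitting of complexes; in~(ii) the paper likewise desingularizes the irreducible component of $Z\times_X Y$ dominating~$X$ and applies~(i) to its projection to~$Z$, leaving implicit the verifications you spell out.
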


\begin{proof}
Pour toute variété~$Z$ propre et lisse sur~$k$, notons~$h_Z$
le groupe de cohomologie du complexe~\eqref{se:egeneral}
associé à~$Z$, de sorte que~$(E)$ vaut pour~$Z$ si et seulement si $h_Z=0$.
Le morphisme~$f$
induit un homomorphisme $f_*:h_Y\to h_X$.
Choisissons une section rationnelle de~$f$ et résolvons-en l'indétermination
grâce à Hironaka~\cite{hironaka}: on obtient
une variété propre et lisse~$X'$ sur~$k$,
un morphisme birationnel $\pi:X'\to X$ et un morphisme $s:X'\to Y$ tels que $f\circ s=\pi$.
Comme~$\pi$ est birationnel, le morphisme $\pi_*:h_{X'}\to h_X$ est un isomorphisme
(voir \cite[Remarque~1.1~(vi)]{wittdmj}).
L'égalité $f_*\circ s_*=\pi_*$ implique alors que $f_*:h_Y\to h_X$ est surjective;
ainsi $h_Y=0$ entraîne que $h_X=0$, ce qui prouve~(i).

Pour vérifier l'assertion~(ii),
on fixe $Z \to X$ comme dans la définition~\ref{def:eplus},
on choisit une désingularisation~$Z'$ de la composante irréductible de $Z \times_X Y$
qui domine~$X$ et l'on applique~(i) au morphisme $Z'\to Z$.
\end{proof}

La proposition~\ref{prop:eplus bonne propriete}
nous permet d'appliquer \cite[Théorème~5.2]{demarchelucchinireduction}
à la propriété~$(E^+)$.
Ainsi, pour établir
le théorème~\ref{th but}, il nous reste seulement à montrer que
les compactifications lisses d'espaces homogènes de~$\SL_n$ à stabilisateur
géométrique fini vérifient~$(E^+)$.

\subsection{Réduction aux stabilisateurs finis résolubles}
\label{subsec:reductionstabfinires}

Réduisons-nous maintenant aux espaces homogènes dont le stabilisateur géométrique est
fini d'ordre une puissance d'un nombre premier et est donc \emph{a~fortiori} un groupe fini résoluble.

\begin{prop}
\label{prop:eplus en p}
Soit~$X$ une variété irréductible, propre et lisse sur~$k$.
Supposons que pour tout nombre premier~$p$, il existe une variété irréductible, propre et lisse~$Y$
sur~$k$ vérifiant la propriété~$(E^+)$ et un morphisme $Y \to X$ génériquement fini de degré premier à~$p$.
Alors~$X$ vérifie la propriété~$(E^+)$.
\end{prop}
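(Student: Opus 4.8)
Le plan est de procéder par un argument de transfert (restriction--corestriction). On commence par noter la fonctorialité suivante de la conjecture~$(E)$. Comme dans la preuve du lemme~\ref{lem:e avec section se transmet}, pour une variété~$Z$ irréductible, propre et lisse sur~$k$, notons~$h_Z$ le groupe de cohomologie du complexe~\eqref{se:egeneral} associé à~$Z$, de sorte que~$(E)$ vaut pour~$Z$ si et seulement si $h_Z=0$. Si $f:Y\to X$ est un morphisme entre de telles variétés, génériquement fini de degré~$d$, l'image directe et l'image réciproque des zéro-cycles (sur~$X$, sur chaque~$X_{k_v}$, et sur les versions complétées et adéliques), jointes à la restriction $f^*:\Br(X)\to\Br(Y)$ et à la corestriction $\mathrm{cor}_f:\Br(Y)\to\Br(X)$, sont compatibles aux accouplements de~\eqref{se:e} d'après la formule de projection; elles définissent donc des morphismes de complexes entre~\eqref{se:egeneral} appliqué à~$X$ et à~$Y$, dans les deux sens, induisant $f^*:h_X\to h_Y$ et $f_*:h_Y\to h_X$ avec $f_*\circ f^*=d\cdot\mathrm{id}_{h_X}$, puisque $f_*\circ f^*=d$ sur~$\CH_0$ et $\mathrm{cor}_f\circ f^*=d$ sur~$\Br$. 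En particulier, \emph{si $h_Y=0$, le groupe $h_X$ est annulé par~$d$}. On utilisera aussi l'observation élémentaire suivante: si un groupe abélien~$A$ est tel que pour tout nombre premier~$p$ il existe un entier $d_p\geq 1$ avec $p\nmid d_p$ et $d_pA=0$, alors $A=0$; en effet, en prenant $p=2$, le groupe~$A$ est de torsion, et si $a\in A$ était d'ordre $n>1$, en choisissant un premier~$p$ divisant~$n$ la relation $d_pa=0$ donnerait $n\mid d_p$, donc $p\mid d_p$, une contradiction.

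Ces deux remarques donnent déjà~$(E)$ pour~$X$ lui-même (appliquer~$(E^+)$ à l'identité de chaque~$Y$ fournit $h_Y=0$, puis utiliser les degrés premiers à~$p$). Pour obtenir~$(E^+)$, fixons un morphisme dominant $Z\to X$ avec~$Z$ irréductible, propre et lisse sur~$k$, satisfaisant aux conditions~(i) et~(ii) de la définition~\ref{def:eplus}; il faut montrer que $h_Z=0$. Quitte à rétrécir l'ouvert dense $X^0\subset X$ de la condition~(ii), on peut supposer, $k$ étant de caractéristique nulle, que $Z\to X$ est lisse à fibres géométriquement intègres au-dessus de~$X^0$, de sorte que~$Z_x$ est lisse pour tout point fermé $x\in X^0$. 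Fixons un nombre premier~$p$ et choisissons $\phi:Y\to X$ génériquement fini de degré $[k(Y):k(X)]$ premier à~$p$, avec~$Y$ irréductible, propre et lisse sur~$k$ vérifiant~$(E^+)$. La fibre générique de $Z\to X$ étant rationnellement connexe, donc géométriquement intègre, le schéma $Z\times_X Y$ possède une unique composante irréductible~$C$ dominant~$Y$; celle-ci domine aussi~$Z$, le morphisme induit $C\to Z$ étant génériquement fini de degré $[k(Y):k(X)]$. Soit $Z_p\to C$ une désingularisation (Hironaka~\cite{hironaka}): alors~$Z_p$ est irréductible, propre et lisse sur~$k$, le morphisme $Z_p\to Y$ est dominant, et sa fibre générique est birationnellement équivalente, sur~$k(Y)$, à la fibre générique de $Z\to X$ étendue à~$k(Y)$, laquelle est rationnellement connexe; la condition~(i) est donc remplie pour $Z_p\to Y$.

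Pour la condition~(ii), posons $Y^0=\phi^{-1}(X^0)$; quitte à rétrécir~$X^0$, on peut supposer qu'au-dessus de~$Y^0$ le schéma $Z\times_X Y$ est irréductible — donc égal à~$C$, donc à $(Z\times_X X^0)\times_{X^0}Y^0$, lisse sur~$Y^0$ — et que la désingularisation $Z_p\to C$ est un isomorphisme au-dessus de~$Y^0$. Alors, pour toute extension finie~$k'/k$ et tout $y\in Y^0(k')$, la fibre $(Z_p)_y$ est canoniquement isomorphe à~$Z_{\phi(y)}$, où $\phi(y)\in X^0(k')$; par la condition~(ii) pour $Z\to X$, elle vérifie donc la condition~$(\star)$ du lemme~\ref{lem:fibration cas trivial} sur~$k'$. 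Le morphisme $Z_p\to Y$ remplissant ainsi les conditions~(i) et~(ii) de la définition~\ref{def:eplus}, la propriété~$(E^+)$ pour~$Y$ entraîne~$(E)$ pour~$Z_p$, c'est-à-dire $h_{Z_p}=0$. Enfin la première projection $Z_p\to Z$ est génériquement finie de degré $[k(Y):k(X)]$, premier à~$p$; d'après la remarque de transfert, $h_Z$ est annulé par un entier premier à~$p$. Le premier~$p$ étant arbitraire, l'observation élémentaire donne $h_Z=0$, et~$Z$ étant arbitraire, $X$ vérifie~$(E^+)$.

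Le point principal est la vérification que $Z_p\to Y$ hérite des conditions~(i) et~(ii) de la définition~\ref{def:eplus}: l'identification de la composante de $Z\times_X Y$ dominant~$Y$ (et de ses fibres au-dessus de~$Y^0$) avec un produit fibré lisse, moyennant un choix assez petit de~$X^0$, donc de~$Y^0$. Le reste est formel: la formule de projection et le couple restriction--corestriction pour les zéro-cycles et les groupes de Brauer, l'invariance birationnelle de~$(E)$ (consignée dans \cite[Remarque~1.1~(vi)]{wittdmj}), l'invariance birationnelle de la propriété~$(\star)$ parmi les variétés lisses (notée après le lemme~\ref{lem:fibration cas trivial}), la stabilité de la connexité rationnelle par extension du corps de base, et le lemme de théorie des groupes ci-dessus. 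On n'anticipe pas de difficulté sérieuse, la construction étant parallèle à celles du lemme~\ref{lem:eplus se releve} et de~\cite{borovoi}.
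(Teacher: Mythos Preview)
Your argument is correct and follows essentially the same route as the paper's proof: form a desingularisation~$Z_p$ of the component of $Z\times_X Y$ dominating~$Y$, use~$(E^+)$ for~$Y$ to get~$(E)$ for~$Z_p$, and apply the transfer $f_*f^*=d$ on~$h_Z$ to conclude that~$h_Z$ is killed by an integer prime to~$p$ for every~$p$. The paper is more laconic---it simply asserts that~$Z'$ vérifie~$(E)$ puisque~$Y$ vérifie~$(E^+)$ and invokes the covariant and contravariant functorialities of the Chow group and of the Brauer group---whereas you spell out the verification of conditions~(i) and~(ii) for $Z_p\to Y$ and the restriction--corestriction formalism; but the underlying strategy is identical.
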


\begin{proof}
Soit~$Z$ une variété irréductible, propre et lisse sur~$k$ munie d'un morphisme dominant $Z\to X$
vérifiant les conditions de la définition~\ref{def:eplus}.
Soit~$p$ un nombre premier.
Soit $Y\to X$ un morphisme comme dans l'énoncé de la proposition.
Soit~$Z'$ une désingularisation de la composante irréductible de $Z \times_X Y$
qui domine~$Y$.
La variété~$Z'$ vérifie~$(E)$ puisque~$Y$ vérifie~$(E^+)$.
Reprenant la notation introduite au début de la preuve du lemme~\ref{lem:e avec section se transmet},
les fonctorialités covariante et contravariante du groupe de Chow et du groupe de Brauer
(voir \cite[Chapter~1, Chapter~8]{fulton}, \cite[\textsection1.1]{ctsd94})
permettent d'en déduire que~$h_Z$ est annulé par
le degré du morphisme génériquement fini $Z' \to Z$,
qui est un entier premier à~$p$.
Ainsi, ce groupe
est un groupe abélien de torsion première à~$p$, pour tout~$p$;
il est donc nul.
\end{proof}

La proposition~\ref{prop:eh sylow}
combinée à la proposition~\ref{prop:eplus en p} ci-dessus montre
que
la propriété~$(E^+)$,
sur toutes les extensions finies de~$k$,
pour les compactifications
lisses d'espaces homogènes de~$\SL_n$ à stabilisateur géométrique fini d'ordre une puissance
d'un nombre premier
implique la propriété~$(E^+)$, sur~$k$,
 pour les compactifications lisses d'espaces homogènes de~$\SL_n$ à stabilisateur
géométrique fini.

\subsection{Passage aux torseurs universels}

Nous appliquons ici la méthode de la descente à l'étude des zéro-cycles
sur les variétés rationnellement connexes
et vérifions plus précisément
qu'afin d'établir la propriété~$(E^+)$
pour une telle variété,
il est loisible,
quitte à effectuer une extension des scalaires,
de passer à ses torseurs universels.

\begin{prop}
\label{prop:reductiontorseursuniversels}
Soit~$X$ une variété propre, lisse et rationnellement connexe, sur~$k$.
Si
pour toute extension finie $k'/k$, la propriété~$(E^+)$ vaut
pour les compactifications lisses des torseurs universels de
la variété $X\otimes_k k'$ sur~$k'$,
elle vaut pour~$X$ aussi.
\end{prop}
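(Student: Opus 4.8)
The plan is to unwind Definition~\ref{def:eplus}: one is given a dominant morphism $f\colon Z\to X$ from an irreducible, proper, smooth $k$\nobreakdash-variety~$Z$ satisfying conditions~(i) and~(ii) of that definition relative to~$X$, and one must prove that~$Z$ verifies the conjecture~$(E)$, that is to say that the complex~\eqref{se:egeneral} attached to~$Z$ is exact. Observe that~$Z$ is rationally connected, being proper over the rationally connected variety~$X$ with rationally connected generic fibre; in particular $\Pic(X_\bark)$ is free of finite type. Write~$T$ for the torus with character group $\Pic(X_\bark)$, so that a universal torsor of~$X$, or of~$X_{k'}$ for a finite extension $k'/k$, is a torsor under~$T$, respectively under~$T_{k'}$. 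Such a torsor over $X_{k'}$ exists whenever~$k'$ splits~$T$: the obstruction to its existence is a single class of $H^2(k',T)$, annihilated by any splitting field of~$T$.

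The crucial construction is the pull-back of~$Z$ along twisted universal torsors of~$X_{k'}$. Fix a finite extension $k'/k$ over which a universal torsor $\mathcal T\to X_{k'}$ exists, and fix $\sigma\in Z^1(k',T)$. The twist $\mathcal T^\sigma\to X_{k'}$ is again a universal torsor of~$X_{k'}$, so by hypothesis every smooth compactification $Y^\sigma$ of it verifies $(E^+)$ over~$k'$. Choose a smooth compactification $W^\sigma$ of $Z_{k'}\times_{X_{k'}}\mathcal T^\sigma$ for which the projection onto $\mathcal T^\sigma$ extends to a morphism $W^\sigma\to Y^\sigma$. Then this morphism satisfies conditions~(i) and~(ii) of Definition~\ref{def:eplus} relative to~$Y^\sigma$: its generic fibre is a base change, to a field extension of $k(X)$, of the rationally connected generic fibre of~$f$, hence is rationally connected; and over a suitable dense open of $\mathcal T^\sigma$ lying above the open $X^0\subset X$ furnished by condition~(ii) for~$f$, the fibres of $W^\sigma\to Y^\sigma$ coincide, after identification of residue fields, with the fibres of~$f$ above the corresponding points of~$X^0$, and thus verify~$(\star)$. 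Here one uses that conditions~(i) and~(ii) of Definition~\ref{def:eplus} are stable under finite extension of the base field and under composition of dominant morphisms with rationally connected fibres — the stability properties invoked in the proof of Lemma~\ref{lem:eplus se releve}, which rest on \cite[Theorem~1.1]{ghs} and on Lemma~\ref{lem:fibration cas trivial}. Consequently $(E^+)$ for~$Y^\sigma$ yields the conjecture~$(E)$ for~$W^\sigma$.

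It remains to descend. The morphism $Z_{k'}\times_{X_{k'}}\mathcal T^\sigma\to Z_{k'}$ is a torsor under~$T_{k'}$, and, since twisting commutes with pull-back, these torsors are, as~$\sigma$ ranges over $Z^1(k',T)$, exactly the twists of the single torsor $Z_{k'}\times_{X_{k'}}\mathcal T\to Z_{k'}$ under~$T_{k'}$. Having established, for every finite extension $k'/k$, that the smooth compactifications of all those twists verify the conjecture~$(E)$, one concludes that~$Z$ verifies~$(E)$ by the descente method for zero-cycles under a torus — the analogue, for the complex~\eqref{se:egeneral}, of Theorem~\ref{th descente}. Over a number field this rests on the perfectness of the Poitou--Tate pairing $\Sha^2(k,\widehat T)\times\Sha^1(k,T)\to\Q/\Z$, and it can be obtained by the techniques of~\cite{hw}. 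Since $f\colon Z\to X$ was an arbitrary morphism of the kind appearing in Definition~\ref{def:eplus}, this establishes the property~$(E^+)$ for~$X$.

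The step that I expect to be the main obstacle is this last descente for zero-cycles under an arbitrary torus: carrying it out unconditionally, and in a form that accommodates the \emph{transcendental} classes that the unramified Brauer groups of the torsors can contain in the present situation, is the substantive input, mirroring the work done in~\textsection\ref{sec:descente} for rational points. The quantification over all finite extensions $k'/k$ in the statement is exactly what allows the argument to go through: it both secures the existence of the required universal torsors and supplies the input in the form demanded by a descente for zero-cycles which, unlike one for rational points, is naturally formulated over all finite extensions of the base field.
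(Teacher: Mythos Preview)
Your approach has a genuine gap precisely where you flag it: the ``descente for zero-cycles under a torus'' you invoke in the third paragraph is not proved in the paper, is not a known theorem in the generality you need (with transcendental Brauer classes on the torsors), and ``it can be obtained by the techniques of~\cite{hw}'' is not an argument. Establishing such a statement would be a new result of roughly the same weight as Theorem~\ref{th descente} itself.

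The paper bypasses this entirely. The step you are missing is the reduction furnished by \cite[Theorem~8.3~(3)]{hw} applied to the trivial fibration $Z\times_k\P^1_k\to\P^1_k$: since~$Z$ is rationally connected, conjecture~$(E)$ for~$Z$ follows once one shows, for every finite extension $k'/k$, that the image of $Z(\A_{k'})^{\Br(Z_{k'})}$ in $\CHzAhat(Z_{k'})$ lies in the image of $\CHzhat(Z_{k'})$. This converts the zero-cycle question into a statement about a single adelic \emph{point} of~$Z_{k'}$. After renaming~$k'$ as~$k$, one applies Theorem~\ref{th descente} (descent for rational points, already proved in~\textsection\ref{sec:descente}) to the pull-back to~$Z$ of a universal torsor of~$X$, whose existence is guaranteed by $X(\A_k)^{\Br(X)}\neq\emptyset$. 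The adelic point then lifts to some $(P'_v)\in(Z\times_X Y)(\A_k)^{\Br_\nr(Z\times_X Y)}$ for a suitable universal torsor $Y\to X$. Your own argument that the compactification~$Z'$ of $Z\times_X Y$ satisfies~$(E)$ (via~$(E^+)$ for the compactification of~$Y$) now applies verbatim, producing a global class in $\CHzhat(Z')$, and one concludes by pushing forward along $Z'\to Z$. The quantifier over~$k'/k$ in the statement matches exactly the quantifier introduced by \cite[Theorem~8.3~(3)]{hw}; it is not there to feed a hypothetical zero-cycle descent.
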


\begin{proof}
Fixons une variété irréductible, propre et lisse~$Z$ sur~$k$ munie d'un morphisme dominant $Z\to X$
vérifiant les conditions de la définition~\ref{def:eplus}.
Comme~$X$ et la fibre générique de ce morphisme sont rationnellement connexes,
la variété~$Z$ est rationnellement connexe (voir \cite[Theorem~1.1]{ghs}).
D'après \cite[Theorem~8.3~(3)]{hw} appliqué à la fibration triviale $Z\times_k\P^1_k \to \P^1_k$,
il suffit donc, pour vérifier la conjecture~$(E)$ pour~$Z$, de prouver que
pour toute extension finie $k'/k$,
l'image de $Z(\A_{k'})^{\Br(Z\otimes_k k')}$ dans $\CHzAhat(Z\otimes_k k')$
est contenue dans l'image de $\CHzhat(Z\otimes_k k')$ dans ce même groupe.

Quitte à remplacer~$k$ par~$k'$, on peut supposer que $k=k'$.
Fixons $(P_v)_{v\in\Omega}\in Z(\A_k)^{\Br(Z)}$.
Comme $Z(\A_k)^{\Br(Z)}\neq\emptyset$, on a $X(\A_k)^{\Br(X)}\neq\emptyset$,
par conséquent~$X$ admet un torseur universel (voir \cite[Proposition~6.1.4]{skobook}).
D'après le théorème~\ref{th descente}
appliqué au torseur sur~$Z$ obtenu par changement de base,
il existe un torseur universel $Y \to X$
et un point adélique
$(P'_v)_{v\in\Omega} \in (Z \times_X Y)(\A_k)^{\Br_{\nr}(Z\times_X Y)}$
relevant $(P_v)_{v\in\Omega}$.

Soit~$Y'$ une compactification lisse de~$Y$.
Soit~$Z'$ une compactification lisse de $Z \times_X Y$ telle que les projections
$Z\times_X Y \to Z$ et $Z \times_X Y \to Y$ s'étendent en des morphismes $p:Z'\to Z$ et $q:Z' \to Y'$.
Le morphisme~$q$ remplit les conditions~(i) et~(ii) de la définition~\ref{def:eplus}.
Comme par hypothèse la variété~$Y'$ vérifie la propriété~$(E^+)$,
la variété~$Z'$ vérifie donc~$(E)$: l'image de $(P'_v)_{v\in\Omega}$
dans $\CHzAhat(Z')$ appartient à l'image de $\CHzhat(Z')$ dans ce même groupe.
En appliquant~$p_*$, on conclut que l'image de $(P_v)_{v\in\Omega}$
dans $\CHzAhat(Z)$ provient de $\CHzhat(Z)$.
\end{proof}

\begin{rmk}
La proposition~\ref{prop:reductiontorseursuniversels} resterait vraie
(avec la même preuve, qui d'ailleurs se simplifierait quelque peu)
si dans son énoncé on remplaçait~$(E^+)$ par~$(E)$.
\end{rmk}

\subsection{Compatibilité de la propriété \texorpdfstring{$(E^+)$}{(E⁺)} aux fibrations}

\begin{prop}
\label{prop:compatibilite eplus fibrations}
Soient~$X$ une variété irréductible, propre et lisse sur~$k$
et $f:X\to \P^n_k$ un morphisme dominant de fibre générique rationnellement connexe,
pour un entier $n\geq 1$.
Si la propriété~$(E^+)$ vaut pour les fibres de~$f$ au-dessus des points fermés d'un ouvert dense de~$\P^n_k$,
elle vaut pour~$X$ aussi.
\end{prop}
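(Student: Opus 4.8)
Le plan est de déduire l'énoncé du théorème~\ref{thm:fibration cas general zerocycles} (c'est-à-dire de \cite[Corollary~8.4~(1)]{hw}), appliqué non pas à~$X$ mais, pour chaque variété~$Z$ intervenant dans la définition~\ref{def:eplus} de la propriété~$(E^+)$, au morphisme composé $Z \to X \xrightarrow{f} \P^n_k$. On fixera donc une variété~$Z$ irréductible, propre et lisse sur~$k$, un morphisme dominant $\rho:Z\to X$ de fibre générique rationnellement connexe et un ouvert dense $X^0\subset X$ vérifiant la condition~(ii) de la définition~\ref{def:eplus}, et l'on cherchera à établir que~$Z$ vérifie la conjecture~$(E)$. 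Posons $g=f\circ\rho$ et choisissons pour~$E$ la $k$\nobreakdash-algèbre étale produit de~$n$ copies de~$k$, de sorte que $Q=R_{E/k}\Gm=\Gm^n\subset\A^n_k\subset\P^n_k$. La condition~(i) de la définition~\ref{def:eplus} étant stable par composition de morphismes dominants (voir la preuve du lemme~\ref{lem:eplus se releve}), la fibre générique de~$g$ est rationnellement connexe; le théorème~\ref{thm:fibration cas general zerocycles} s'appliquera donc à~$g$ et ramènera la conjecture~$(E)$ pour~$Z$ à la conjecture~$(E)$ pour les fibres $Z_q=g^{-1}(q)$, sur le corps de nombres~$k(q)$, pour~$q$ parcourant un sous-ensemble hilbertien de~$Q$.

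Il restera à exhiber un ouvert dense $U\subset\P^n_k$ tel que~$Z_q$ vérifie la conjecture~$(E)$ sur~$k(q)$ pour tout point fermé $q\in U$; comme tout ouvert dense non vide de~$Q$ est un sous-ensemble hilbertien de~$Q$ (prendre le recouvrement étale trivial), cela suffira. On prendra pour~$U$ l'intersection de l'ouvert dense fourni par l'hypothèse, au-dessus des points duquel les fibres de~$f$ vérifient~$(E^+)$, de l'ouvert dense au-dessus duquel les fibres de~$f$ et de~$g$ sont lisses et géométriquement irréductibles, et de l'ouvert dense au-dessus duquel $X^0$, d'une part, et un ouvert dense $X^{00}\subset X$ au-dessus duquel les fibres de~$\rho$ sont non vides et rationnellement connexes (connexité rationnelle en famille, \cite[Chapter~IV]{kollarbook}), d'autre part, rencontrent la fibre correspondante de~$f$ en un ouvert dense — ce qui vaut hors d'un fermé strict de~$\P^n_k$ pour des raisons de dimension, les fermés $X\setminus X^0$ et $X\setminus X^{00}$ ne dominant pas la fibre générique de~$f$.

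Pour $q\in U$, les variétés~$X_q$ et~$Z_q$ seront irréductibles, propres et lisses sur le corps de nombres~$k(q)$, la variété~$X_q$ vérifiera~$(E^+)$, et le point générique~$\eta_q$ de~$X_q$ appartiendra à $X^0\cap X^{00}$. Le morphisme $\rho_q:Z_q\to X_q$ déduit de~$\rho$ sera dominant, puisque $X^{00}$ est contenu dans l'image de~$\rho$, et sa fibre générique $Z_{\eta_q}$ sera rationnellement connexe, puisque $\eta_q\in X^{00}$; la condition~(i) de la définition~\ref{def:eplus} sera donc satisfaite par~$\rho_q$. Pour la condition~(ii), on prendra l'ouvert dense $X_q^0=X^0\cap X_q$ de~$X_q$ (dense puisque $\eta_q\in X^0$): pour toute extension finie $k''/k(q)$ — donc finie sur~$k$ — et tout $x\in X_q^0(k'')\subset X^0(k'')$, la fibre $(Z_q)_x$ coïncidera avec~$Z_x$, puisque $Z_q=Z\times_X X_q$, donc vérifiera la condition~$(\star)$ du lemme~\ref{lem:fibration cas trivial} sur~$k''$ en vertu de l'hypothèse faite sur~$\rho$. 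La propriété~$(E^+)$ pour~$X_q$ donnera alors la conjecture~$(E)$ pour~$Z_q$, ce qui achèvera la démonstration.

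Une fois admis le théorème~\ref{thm:fibration cas general zerocycles}, l'argument est de nature formelle. Le seul point demandant un peu de soin sera la descente des conditions~(i) et~(ii) de la définition~\ref{def:eplus} au morphisme~$\rho_q$ pour~$q$ parcourant un ouvert dense de~$Q$: pour~(i), il faut savoir que la connexité rationnelle des fibres de~$\rho$ s'étend à un ouvert dense de~$X$ et que cet ouvert, comme~$X^0$, rencontre la fibre générale de~$f$ en un ouvert dense; pour~(ii), il faut constater que la fibre de~$\rho$ en un point rationnel de~$X_q$ ne dépend pas du corps de base au-dessus duquel on la considère. Ces vérifications reposent sur les énoncés standards de la connexité rationnelle en famille et sur de simples considérations de dimension.
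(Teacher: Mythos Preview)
Your argument is correct and follows the same route as the paper's proof: fix $Z\to X$ as in the definition of~$(E^+)$, form the composite $g:Z\to\P^n_k$, check that the fibres $Z_q$ over closed points of a suitable dense open $U\subset\P^n_k$ satisfy~$(E)$ by applying~$(E^+)$ for~$X_q$ to the induced morphism $\rho_q:Z_q\to X_q$, and conclude via the theorem~\ref{thm:fibration cas general zerocycles}. The paper's write-up is terser---it shrinks~$X^0$ so that~$\rho$ is smooth above it rather than introducing a separate~$X^{00}$, and it does not spell out the verification of conditions~(i) and~(ii) for~$\rho_q$---but the content is the same.
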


\begin{proof}
Soient $Z\to X$ et $X^0\subset X$ comme dans la définition~\ref{def:eplus}.
Notons $g:Z \to \P^n_k$ la composée de ce morphisme avec~$f$.
Quitte à rétrécir~$X^0$,
on peut supposer $Z \to X$ lisse au-dessus de~$X^0$.
Soit $U \subset \P^n_k$ un ouvert dense au-dessus duquel~$f$ et~$g$ sont lisses
et au-dessus des points fermés
duquel les fibres de~$f$ vérifient~$(E^+)$.
Quitte à rétrécir~$U$, on peut supposer
l'ouvert $X^0 \cap f^{-1}(u)$ dense dans~$f^{-1}(u)$
pour tout $u \in U$.
Les fibres de~$g$ au-dessus des points fermés de~$U$ vérifient alors la conjecture~$(E)$.
D'autre part, la fibre générique de~$g$
est rationnellement connexe, d'après Graber--Harris--Starr \cite[Theorem~1.1]{ghs}.
Que~$Z$ vérifie la conjecture~$(E)$
résulte donc du
théorème~\ref{thm:fibration cas general zerocycles}
appliqué à~$g$.
\end{proof}

\subsection{Fin de la preuve du théorème \ref{th but}}

Montrons, par récurrence sur l'ordre du stabilisateur,
que pour tout corps de nombres~$k$, toute compactification lisse~$X$ d'un espace homogène~$V$
de~$\SL_n$ sur~$k$ à stabilisateur géométrique fini résoluble vérifie la propriété~$(E^+)$.
Au vu des \textsectiondouble\ref{subsec:reductionstabfini}--\ref{subsec:reductionstabfinires},
cela terminera la preuve du théorème~\ref{th but}.

Lorsque le stabilisateur est trivial, la variété~$X$ est rationnelle sur~$k$
puisque l'ensemble $H^1(k,\SL_n)$ est un singleton (théorème de Hilbert~90).
Dans ce cas, la propriété~$(E^+)$ pour~$X$ est donc équivalente, par
le lemme~\ref{lem:eplus invariant birationnel stable},
à la propriété~$(E^+)$ pour le point;
or cette dernière est bien vraie (voir \cite[Proposition~3.2.3]{liangarithmetic}
ou appliquer \cite[Theorem~8.3~(3), Lemma~8.2]{hw} à la fibration triviale $Z\times_k\P^1_k \to \P^1_k$).

Lorsque le stabilisateur n'est pas trivial, on applique
la proposition~\ref{prop:reductiontorseursuniversels}: quitte à effectuer une extension
des scalaires, il suffit d'établir~$(E^+)$ pour une compactification lisse
de chaque torseur universel de~$X$,
et même, vu le lemme~\ref{lem:eplus invariant birationnel stable}, pour une compactification lisse
d'un ouvert dense de chaque torseur universel de~$X$.
D'après le corollaire~\ref{cor:torseurs universels}
et le corollaire~\ref{cor:torseurs universels eh}, chaque torseur universel de~$X$
contient un ouvert dense admettant une compactification lisse munie d'un morphisme vers un espace projectif
dont la fibre générique est elle-même
une compactification lisse d'un
espace homogène de~$\SL_n$ à stabilisateur géométrique fini résoluble d'ordre strictement
inférieur à l'ordre du stabilisateur géométrique de~$V$.
L'hypothèse de récurrence et la proposition~\ref{prop:compatibilite eplus fibrations} permettent
de conclure la démonstration.

\begin{rmk}
Il est naturel de se demander si
une variante des arguments employés dans la preuve du théorème~\ref{th but}
permettrait d'établir l'énoncé suivant,
analogue, pour les zéro-cycles, du problème de Galois inverse:
si~$\Gamma$ est un groupe fini
et~$k$ un corps de nombres, il existe un entier $n\geq 1$ et des extensions finies $k_1,\dots,k_n$
de~$k$ tels que les degrés $[k_i:k]$ soient premiers entre eux dans leur ensemble
et que
pour chaque $i\in\{1,\dots,n\}$,
il existe une extension
finie galoisienne de~$k_i$ de groupe de Galois isomorphe à~$\Gamma$.  Cependant, cet énoncé est déjà connu,
même dans sa version \og{}régulière\fg{} (voir \cite[Remark~3, p.~367]{ctrcgalois}).
Au demeurant, il se démontre de façon élémentaire.
En effet, il n'est pas difficile de vérifier, par une récurrence sur la dimension,
que pour
tout sous-ensemble hilbertien~$H$
d'une variété~$X$ lisse et géométriquement irréductible sur un corps de nombres,
tout zéro-cycle sur~$X$ est rationnellement équivalent à un zéro-cycle supporté sur~$H$;
en particulier,
le sous-ensemble
hilbertien de $X=\SL_n/\Gamma$ associé au revêtement $\SL_n \to X$
contient un zéro-cycle de degré~$1$.
\end{rmk}

\bibliographystyle{monamsalpha}
\bibliography{zceh}
\typeout{get arXiv to do 4 passes: Label(s) may have changed. Rerun}
\end{document}